\newtheorem{theorem}{Theorem}
\newtheorem{lemma}{Lemma}
\newtheorem{corollary}{Corollary}
\def\mR{{\mathbb R}}
\def\bP{\mathbf{P}}
\def\cL{{\cal L}}
\def\cB{{\cal B}}
\def\pd{\succ}
\def\psd{\succeq}
\def\nd{\prec}
\def\nsd{\preceq}
\def\xh{\hat{x}}
\def\yh{\hat{y}}
\def\sh{\hat{s}}
\def\xb{\bar{x}}
\def\yb{\bar{y}}
\def\sb{\bar{s}}
\begin{document}

\title{Central Path Art}

\author{Thor Catteau$^{\,a1}$,
    Benjamin Glancy$^{\, a2}$,
    Allen Holder$^{\, a3^*}$, 
    Angela Milkowski$^{\, a4}$, \\[4pt]
    Alexa Renner$^{\, a5}$,
    Connor Tasik$^{\, a6}$, and
    Rebecca Testa$^{\, a7}$
    \\ \\
\parbox{.9\textwidth}{\footnotesize
$^a\,$Department of Mathematics, Rose-Hulman Institute of Technology, \\
        \hspace*{6pt}Terre Haute, IN, USA} \\[8pt]
\parbox{.9\textwidth}{\footnotesize
        \hspace*{-10pt}
        \begin{tabular}{ll}
        $^1\,$catteae@rose-hulman.edu, &
        $^2\,$glancybc@rose-hulman.edu \\[2pt]
        $^3\,$holder@rose-hulman.edu, &
        $^4\,$milkowaj@rose-hulman.edu \\[2pt]
        $^5\,$renneram@rose-hulman.edu, &
        $^6\,$tasikca@rose-hulman.edu \\[2pt]
        $^7\,$testarl@rose-hulman.edu 
        \end{tabular} \\
        }  \\[10pt]
\parbox{0.9\textwidth}{\footnotesize
$^*$Corresponding author} \\[0pt]
}

\maketitle

\begin{abstract}

The central path revolutionized the study of
optimization in the 1980s and 1990s due to its favorable
convergence properties, and as such, it has been
investigated analytically, algorithmically, and 
computationally. Past pursuits have primarily focused
on linking iterative approximation algorithms to the
central path in the design of efficient algorithms
to solve large, and sometimes novel, optimization problems. 
This algorithmic intent has meant that the central
path has rarely been celebrated as an aesthetic entity in 
low dimensions, with the only meager exceptions being illustrative
examples in textbooks. We undertake this low dimensional
investigation and illustrate the artistic use of the central path to
create aesthetic tilings and flower-like constructs in two and three 
dimensions, an endeavor that combines mathematical rigor and
artistic sensibilities. The result is a fanciful and enticing
collection of patterns that, beyond computer generated images, 
supports math-aesthetic designs for novelties and museum-quality 
pieces of art. \\\

\noindent{\bf Keywords:} Optimization, Central Path,
    Mathematical Art \\[4pt]

\end{abstract}

\section{Historical Basis and Motivation} \label{sec-intro}

The concepts of an analytic center and an interior point
algorithm began with Frisch and Huard at the inception of the 
modern age of optimization and operations research~\cite{frisch1955,huard64,huard67},
see also the original work of Dikin~\cite{dikin67} and
Fiacco and McCormick~\cite{fiacco68}. These pioneering
articles motivated the idea of iterating through points interior 
to the feasible region toward an optimal solution, which differed 
from the prevailing vertex approach of the simplex method. However, the 
simplex algorithm left open the question of whether or not the class of 
linear programming problems was solvable in polynomial time~\cite{klee72}. 
Leonid Khachiyan famously answered this question in 1979 by presenting 
a polynomial time algorithm for linear programs~\cite{khachiyan79}, but
Khachiyan's algorithm proved impractical and the simplex method continued to 
prevail until Narendra Karmarkar introduced a more successful polynomial 
time algorithm in 1984~\cite{karmarkar84b,karmarkar84a}. Although not 
obvious at the time of their publications, the algorithms of both Khachiyan 
and Karmarkar were interior-point methods related to the original works of 
Huard, Dikin, and Fiacco and McCormick, a connection that was noticed soon 
thereafter in~\cite{gill86}. The practical and mathematical success of 
Karmarkar's algorithm marked the advent of interior-point methods, and it
led to thousands upon thousands of publications that have revolutionized the 
field of optimization~\cite{forsgren02,wright98,wright05}. An overriding
conclusion from this literature is that efficient and efficacious 
solution procedures to a wide class of salient problems rely on the 
favorable convergence properties of the central path, and hence,
the central path is one of the most important mathematical entities of the 
twentieth century.

The central path is a parametrization of the positive real half-line in a convex
set, and each element of the path is an analytic center. The pertinence 
of the central path to effective algorithm design has prompted numerous 
mathematical studies to understand its analytic, geometric, and topological 
properties. Sonnevend undertook many of the original 
studies~\cite{sonnevend86,sonnevend88}, see also~\cite{jarre88}, but the 
mathematical pursuits continued for decades as briefly illustrated 
by~\cite{caron02,deza06,halicka99,holder00,deloera12,nematollahi08,vavasis96}.
We refer readers to see~\cite{terlaky2009} for a succinct historical
review of interior point methods.

Our motivation to create art with the central path stems from its 
mathematical and computational properties, which again, have revolutionized the 
field of optimization. Many have generated paths in low dimensions to limn the 
behavior of interior-point methods, and in doing so, they have certainly
found it easy, if not alluring, to be captivated by an esoteric and 
quizzical elegance. We leverage this elegance to create aesthetic images 
in two and three dimensions, and we use these images to create artistic
wall hangings, works of stained glass, backpack tags, beverage coasters, 
holiday ornaments, and three-dimensional sculptures. A continued pursuit is
to assemble three-dimensional sculptures into an optimization garden full 
of (random) interior bouquets that dance and sparkle.

We straightforwardly introduce the central path and its convergence analysis
in Sections~\ref{sec-acp} and~\ref{sec-conv} so
that it is reasonably accessible to undergraduates, although many
will require modest educational additions. For instance, 
few undergraduates have dedicated coursework in optimization 
and/or convex analysis, and standard courses in real analysis often forgo 
the Implicit Function Theorem. We employ such results and expect readers to 
undertake brief studies as needed. We also encourage the use of the 
{\em Mathematical Programming Glossary} to clarify definitions~\cite{mpg}.

We present two- and three-dimensional works of art in 
Sections~\ref{sec-2Dart} and~\ref{sec-3Dart}. Our projects separate
into two categories, one based on tilings of $k$-gons and another
based on floral facsimiles, e.g. daisies, thistles, tulips, roses, and trumpet 
flowers. The majority of our work to date stems from central paths in two-dimensions,
but we also demonstrate the use of Platonic solids. Some of our designs have
motivated new mathematical relationships --- so the quest to create art has lead to
a bit of new mathematics. We conclude in Section~\ref{sec-conclusion} with a succinct 
review and a discussion of our future goals.

\section{The Analytic Central Path} \label{sec-acp}

The specific model from which we proceed is important because the
central path depends on the algebraic description of an optimization
problem and not directly on its geometry, a fact first recognized by 
Sonnevend~\cite{sonnevend86} and then analyzed in~\cite{caron02}. 
We assume that
\[
G:\mR^n \rightarrow \mR^m: 
    x \mapsto \left( \begin{array}{c} g_1(x) \\ g_2(x) \\ \vdots \\ g_m(x)
        \end{array} \right)
\]
is twice smooth, that $\nabla G(x)$ has full column rank, and that 
each component function is convex. The assumed convexity of each $g_i(x)$ 
ensures that 
\[
\{ x \, : \, G(x) \le 0 \}
\]
is convex. We further assume that this set is compact and that its strict 
interior is nonempty, i.e. $\{ x \, | \, G(x) < 0\} \neq \emptyset$. The 
assumption of compactness is atypical but apt for our purposes.
Our mathematical model is
\begin{equation} \label{eq-mainProb}
\max \left\{ c^T x + \mu \sum_{i=1}^m \ln (s_i) \, 
    : \, G(x) + s = 0, \, s \ge 0 \right\},
\end{equation}
with $c \in \mR^n$ defining a linear term in the objective function
and $\mu$ being a positive parameter. The objective function maps into
the extended reals, $\overline{\mR} = \mR \cup \{ -\infty, \infty\}$,
with the objective value being $-\infty$ if some $s_i = 0$.

Our first result shows that~\eqref{eq-mainProb} is well-posed.
\begin{theorem}
The optimization problem in~\eqref{eq-mainProb} is well-posed
for $\mu > 0$ under the stated assumptions of $G$.
\end{theorem}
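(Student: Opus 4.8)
I take ``well-posed'' to mean that \eqref{eq-mainProb} has a finite optimal value attained at a unique point (which moreover will turn out to lie in the strict interior, so that it can later serve as the analytic center defining the central path). The first step is to eliminate the slacks. Since $G(x)+s=0$ and the objective is $-\infty$ as soon as some $s_i=0$, any candidate with finite value must have $s=-G(x)>0$, i.e. $x\in X^0:=\{x:G(x)<0\}$; this set is nonempty by hypothesis and convex because each $g_i$ is convex. On $X^0$ the problem reduces to maximizing the twice-smooth function $\phi(x)=c^Tx+\mu\sum_{i=1}^m\ln(-g_i(x))$, and maximizers of \eqref{eq-mainProb} correspond bijectively to maximizers of $\phi$ over $X^0$; in particular $\sup_{X^0}\phi>-\infty$ because $X^0\neq\emptyset$.

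For existence I would use compactness of $X:=\{x:G(x)\le 0\}$ to show $\phi$ blows down toward the boundary. On the compact set $X$ the linear term $c^Tx$ is bounded and each $-g_j(x)$ is bounded above by some constant $M$, so for every $i$ and every $x\in X^0$ we get $\phi(x)\le \max_{z\in X}c^Tz+(m-1)\mu\ln M+\mu\ln(-g_i(x))$, whence $\phi(x)\to-\infty$ whenever $x$ approaches $\partial X$ (where some $g_i(x)\to 0^-$). Fixing any $\hat x\in X^0$, this forces the superlevel set $L=\{x\in X^0:\phi(x)\ge\phi(\hat x)\}$ to be bounded (it lies in $X$) and bounded away from $\partial X$; hence $L$ is a compact subset of $X^0$ on which $\phi$ is continuous, so $\phi$ attains its maximum on $L$, and that is the global maximum over $X^0$. (Equivalently, the extension of $\phi$ by $-\infty$ off $X^0$ is upper semicontinuous on the compact set $X$.)

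For uniqueness I would prove $\phi$ is strictly concave on the convex set $X^0$ by showing its Hessian is negative definite everywhere. Differentiating twice gives
\[
\nabla^2\phi(x)=\mu\sum_{i=1}^m\frac{\nabla^2 g_i(x)}{g_i(x)}-\mu\sum_{i=1}^m\frac{\nabla g_i(x)\nabla g_i(x)^T}{g_i(x)^2}.
\]
Convexity of $g_i$ gives $\nabla^2 g_i(x)\succeq 0$, and $g_i(x)<0$ makes the first sum negative semidefinite. The second sum is $\nabla G(x)^TD(x)\nabla G(x)$ with $D(x)=\mathrm{diag}\!\big(g_i(x)^{-2}\big)\succ 0$; since $\nabla G(x)$ has full column rank this matrix is positive definite, so $\nabla^2\phi(x)\prec 0$. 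A twice-smooth function with everywhere-negative-definite Hessian on an open convex set is strictly concave, hence has at most one maximizer, and together with the previous paragraph exactly one. Finally, this maximizer lies in $X^0$, so the corresponding slack $s=-G(x)$ is strictly positive and the optimal value is finite.

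I expect the existence step to be the main obstacle, i.e. arguing cleanly that the supremum is attained in the \emph{open} set $X^0$ rather than escaping to $\partial X$. This is exactly where the atypical compactness hypothesis earns its keep: it simultaneously bounds $c^Tx$ and supplies the uniform bound on the $\ln(-g_j(x))$ terms needed to conclude $\phi\to-\infty$ at the boundary. The uniqueness step is essentially the Hessian computation above, the one conceptual point being that the full-column-rank assumption on $\nabla G$ is precisely what promotes concavity to strict concavity.
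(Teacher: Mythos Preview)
Your argument is correct, but it both proves more and proceeds differently from the paper. The paper's Theorem~1 establishes only existence: it keeps the slacks, observes that the objective $f(x,s)=c^Tx+\mu\sum\ln s_i$ is upper semicontinuous on the compact feasible set $\{(x,s):G(x)+s=0,\,s\ge0\}$ (continuous on the strict interior, equal to $-\infty$ when some $s_i=0$), and invokes the fact that an upper semicontinuous function attains its maximum on a compact set. Uniqueness is deferred to Theorem~\ref{thm-pathSmoothness}, where it falls out of the nonsingularity of $\nabla_{x,s,y}F$ and the Implicit Function Theorem.

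You instead eliminate the slacks, work with $\phi(x)=c^Tx+\mu\sum\ln(-g_i(x))$ on the open set $X^0$, and argue via a coercivity/superlevel-set compactness route; your parenthetical remark about the $-\infty$ extension being upper semicontinuous on $X$ is exactly the paper's existence argument in disguise. Your uniqueness step is genuinely different and arguably cleaner for this statement alone: the direct Hessian computation $\nabla^2\phi=\mu\sum g_i^{-1}\nabla^2 g_i-\mu\,\nabla G^T\mathrm{diag}(g_i^{-2})\nabla G\prec 0$ uses the full-column-rank hypothesis in the same spirit as Lemma~\ref{lemma-invMat}, but avoids the KKT machinery. The trade-off is that the paper's route through the Jacobian of $F$ simultaneously delivers smoothness of $(x(\mu),s(\mu),y(\mu))$ via the Implicit Function Theorem, which your strict-concavity argument does not.
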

\begin{proof}
We have feasibility by assumption, and the objective function
is continuous over the strict interior of the feasible region.
Moreover, if $(x^k, s^k) \rightarrow (\xh, \sh)$ is a convergent 
sequence of feasible elements, then
\[
\lim\!\sup_{\hspace*{-14pt}k \rightarrow \infty} 
    \left( c^T x^k + \mu \sum_{i=1}^m \ln (s^k_i) \right)
    \le \left( c^T \xh + \mu \sum_{i=1}^m \ln (\sh_i) \right).
\]
This inequality follows if $\sh > 0$ because the objective function 
is then continuous at $(\xh, \sh)$, and consequently, the limit 
supremum is finite and we satisfy the inequality as an equality. 
If $\sh_i = 0$ for some $i$, then any subsequence of $(x^k, s^k)$, 
say $(x^{k_j}, s^{k_j})$, satisfies $s_i^{k_j} \rightarrow 0$ as 
$j \rightarrow \infty$, and hence, the left-hand side is $-\infty$, 
which matches the right-hand in this case. We conclude that the 
objective is upper semicontinuous, which establishes the result 
because upper semicontinuous functions attain their maximums over 
compact sets.
\end{proof}

We assume for notational convenience that capitalized vectors
represent diagonal matrices whose main diagonals are the elements of 
the vector, so if $x$ is an $n$-vector, then $X$ is an $n \times n$ diagonal
matrix such that $X_{ii} = x_i$. We further use $A \pd 0$ ($A \nd 0$)
and $A \psd 0$ ($A \nsd 0$) to indicate respectively that $A$ is either 
positive (negative) definite or positive (negative) semidefinite. 
Letting $f(x,s)$ be the objective function in~\eqref{eq-mainProb}, 
we have with these notational conventions that the Hessian of $f(x,s)$
at a strictly feasible point $(x,s)$ satisfies
\[
\nabla^2 f(x,s) = 
    \left[ \begin{array}{cc} 0 & 0 \\[4pt] 0 & -S^{-2} \end{array} \right]
    \nsd 0.
\]
So the objective function is concave over the strict interior of the
feasible set. If any component of $s^1$ or $s^2$ is zero for the feasible
elements $(x^1, s^1)$ and $(x^2, s^2)$, then we also have
\[
f\left(  (1 - \theta)x^1 + \theta x^2, 
    (1 - \theta) s^1 + \theta s^2 \right) \ge 
        (1 - \theta) f(x^1,s^1) + \theta f(x^2, s^2) = -\infty.
\]
So the objective function is 
concave, and subsequently, the optimization problem in~\eqref{eq-mainProb} 
is convex.

The Lagrangian of~\eqref{eq-mainProb} is
\[
\cL(x,s,y,\sigma) = c^Tx + \mu \sum_{i=1}^m \ln(s_i)
                            - y^T(G(x) + s) - \sigma^T s,
\]
and the convexity of the problem means that the first-order
Lagrange conditions are both necessary and sufficient for
optimality. So $(x,s)$ is optimal if and only if there
are $m$-vectors $y$ and $\sigma$ so that,
\[
\begin{array}{rcl}
\displaystyle
\nabla_{x,s} \cL(x,s,y,\sigma)
    & \hspace*{-8pt}=\hspace*{-8pt} & \left( \begin{array}{c} c - \nabla G(x)^T y \\[4pt]
            \mu \, S^{-1} e - y - \sigma \end{array} \right)
    = \left( \begin{array}{c} 0 \\ 0 \end{array} \right), \\[18pt]
\displaystyle G(x) + s & \hspace*{-8pt}=\hspace*{-8pt} & 0, \; \sigma^T s = 0, \; s \ge 0, 
    \mbox{ and } \sigma \ge 0,
\end{array}
\]
with $e$ being the vector of ones and length being decided by
the context of its use. Note that $s$ has positive components
in an optimal solution, and hence, we know that $s > 0$, 
$\sigma \ge 0$, and $\sigma^T s = 0$, from which we gain that 
$\sigma = 0$. Also note that $\mu S^{-1} e - y = 0$ ensures that
$y > 0$ because $\mu > 0$. So if we re-express $\mu S^{-1} e - y = 0$ as
$Sy - \mu e = 0$, then the necessary and sufficient conditions reduce to,
\begin{equation} \label{eq-reducedLagrangeCond}
\left.
\begin{array}{rcl}
\nabla G(x)^T y - c & = & 0, \; y > 0, \\[4pt]
G(x) + s & = & 0, \; s > 0, \mbox{ and} \\[4pt]
Sy - \mu e & = & 0.
\end{array}
\right\}
\end{equation}
We define $F$ to be
\[
F: \mR^{2m+n+1} \rightarrow \mR^{2m+n}
    : (x,s,y,\mu) \mapsto \left(
        \begin{array}{c} 
            \displaystyle \nabla G(x)^T y - c \\[4pt]
            \displaystyle G(x) + s \\[4pt]
            Sy - \mu e 
        \end{array}
        \right),
\]
so that the necessary and sufficient conditions for $\mu > 0$ 
are succinctly,
\begin{equation}\label{eq-necSuffCond}
F(x,s,y,\mu) = 0 \; \mbox{ and } \; (s,y) > 0.
\end{equation}
We now have
\[
\nabla_{x,s,y} \, F(x,s,y,\mu) = \left[ 
    \renewcommand{\arraystretch}{2}
    \renewcommand{\arraycolsep}{10pt}
    \begin{array}{ccc}
    \displaystyle \sum_{i=1}^m \, y_i \nabla^2 g_i(x) & 0 &
       \nabla G(x)^T \\
    \nabla G(x) & I & 0 \\
    0 & Y & S
    \end{array} \right],
\]
and we argue that this matrix is invertible, from which we gain 
that $x$ and $s$ are analytic functions of $\mu$ from the 
Implicit Function Theorem.
\begin{lemma} \label{lemma-invMat}
Assume that $Q$ and $D$ are $n \times n$ matrices and that
$B$ is an $m \times n$ matrix. Further assume that $Q \psd 0$,
$D \pd 0$, and that $B$ has full column rank. Then
\[
\left[ \begin{array}{cc}
    Q & B^T \\[6pt] DB & -I \end{array} \right]
\]
is nonsingular.
\end{lemma}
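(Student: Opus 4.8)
The plan is to show that the displayed block matrix has only the zero vector in its kernel, which is enough because it is square. Suppose $(u,v)$ is such a kernel vector, partitioned to match the blocks. Reading off the two block rows gives
\[
Qu + B^T v = 0 \quad \text{and} \quad DBu - v = 0 .
\]
The second equation is solved by $v = DBu$, and substituting into the first collapses the whole system to the single equation $(Q + B^T D B)u = 0$. So the problem reduces to showing that $Q + B^T D B$ has trivial kernel.

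For that, I would pair $(Q + B^T D B)u = 0$ with $u$ to get
\[
u^T Q u + (Bu)^T D (Bu) = 0 .
\]
Since $Q \psd 0$, the first summand is nonnegative; since $D \pd 0$, the second summand is nonnegative and is zero only when $Bu = 0$. A sum of two nonnegative numbers vanishes only if each is zero, so $Bu = 0$, and the full column rank of $B$ then forces $u = 0$. Consequently $v = DBu = 0$, the kernel is trivial, and the matrix is nonsingular.

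The argument is short; the one place to be careful is the reduction step, which works precisely because the $(2,2)$ block is the invertible matrix $-I$ and so lets us eliminate $v$ cleanly — a singular lower-right block would call for a different pairing. It is also worth noting that we never invoke symmetry of $D$ or of $Q + B^T D B$: we use only that $t \mapsto t^T Q t$ is nonnegative and $t \mapsto t^T D t$ is positive for $t \neq 0$, which is exactly what $Q \psd 0$ and $D \pd 0$ assert. This is the shape one obtains from $\nabla_{x,s,y}F$ after eliminating the block of $s$-variables, where $Q = \sum_i y_i \nabla^2 g_i(x) \psd 0$ by convexity of the $g_i$ and positivity of $y$, where $D$ is a positive diagonal matrix built from $s$ and $y$, and where $B = \nabla G(x)$ has full column rank by hypothesis; the lemma thus yields the invertibility of $\nabla_{x,s,y}F$ needed to apply the Implicit Function Theorem.
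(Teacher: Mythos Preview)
Your proof is correct and follows essentially the same route as the paper: reduce the block system to $(Q + B^T D B)u = 0$ by eliminating $v$, and then argue that this matrix has trivial kernel. The paper compresses your second paragraph into the single assertion that $Q + B^T D B \pd 0$, whereas you spell out the pairing argument explicitly; your added remarks on not needing symmetry of $D$ and on the intended application are accurate and match how the lemma is used in the subsequent theorem.
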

\begin{proof}
Observe that
\[
\left[ \begin{array}{cc}
    Q & B^T \\[6pt] DB & -I \end{array} \right]
\left( \begin{array}{c} w \\ u \end{array} \right)
=
\left( \begin{array}{c} 0 \\ 0 \end{array} \right)
\]
reduces to
\[
(Q + B^T D B) \, w = 0.
\]
The properties of $Q$, $B$, and $D$ ensure that
$Q + B^T D B \pd 0$, from which we conclude that
$w$, and subsequently $u$, are both zero.
\end{proof}
\begin{theorem} \label{thm-pathSmoothness}
There is a unique solution to~\eqref{eq-mainProb} for
each $\mu > 0$, say $x(\mu)$, $s(\mu)$ and $y(\mu)$, all of
which are twice-smooth functions of $\mu$.
\end{theorem}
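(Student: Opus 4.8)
The plan is to establish three things in sequence: existence and uniqueness of a solution to~\eqref{eq-mainProb} for each fixed $\mu > 0$, the characterization of that solution by the system $F(x,s,y,\mu) = 0$ with $(s,y) > 0$, and finally the smooth dependence on $\mu$ via the Implicit Function Theorem. Existence for fixed $\mu$ is already in hand from the first theorem of the excerpt (upper semicontinuity plus compactness). Uniqueness follows from strict concavity in the right variables: the objective $f(x,s)$ is concave, and on the strict interior its Hessian is $\mathrm{diag}(0, -S^{-2}) \nsd 0$, which is strictly negative definite in the $s$-block. Since the constraint $G(x) + s = 0$ together with the full-column-rank assumption on $\nabla G(x)$ means $s$ determines $x$ (two feasible points with the same $s$ have $G(x^1) = G(x^2) = -s$, and injectivity of $G$ follows because $\nabla G$ has full column rank and each $g_i$ is convex, so $G$ is a convex injection on the feasible set), strict concavity in $s$ upgrades to strict concavity of the composite objective along the feasible set, giving a unique maximizer. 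The multiplier $y$ is then pinned down uniquely by $Sy = \mu e$ since $s > 0$.

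Next I would invoke the reduction already carried out in the excerpt: the first-order Lagrange conditions are necessary and sufficient by convexity, and they collapse (after eliminating $\sigma = 0$) to~\eqref{eq-reducedLagrangeCond}, i.e. to $F(x,s,y,\mu) = 0$ together with $(s,y) > 0$. So the unique triple $(x(\mu), s(\mu), y(\mu))$ is exactly the unique solution of this system in the open set $\{(s,y) > 0\}$.

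For the smoothness claim, I would apply the Implicit Function Theorem to $F$ at a solution point $(x(\mu_0), s(\mu_0), y(\mu_0), \mu_0)$. The map $F$ is $C^2$ because $G$ is twice smooth (so $\nabla G$ is $C^1$ and the $\sum y_i \nabla^2 g_i(x)$ block is continuous; in fact one more derivative of $G$ would be needed for the Jacobian itself to be $C^1$, but the theorem only asserts the solution is twice-smooth, which requires $F \in C^2$, hence $G \in C^3$ — here I would either quietly assume the "twice smooth" hypothesis on $G$ means enough derivatives, or remark that "twice-smooth" in the conclusion should be read against the available regularity of $G$). The crux is that the partial Jacobian $\nabla_{x,s,y} F$, displayed above, is invertible. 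I would reduce this to Lemma~\ref{lemma-invMat}: eliminate the $s$-block using the middle row $\nabla G(x)\,\delta x + \delta s = 0$, i.e. $\delta s = -\nabla G(x)\,\delta x$, and substitute into the third row $Y\,\delta s + S\,\delta y = 0$ to get $\delta y = S^{-1} Y \nabla G(x)\,\delta x$; plugging into the first row gives $\bigl(\sum_i y_i \nabla^2 g_i(x) + \nabla G(x)^T S^{-1} Y \nabla G(x)\bigr)\delta x = 0$. Now set $Q = \sum_i y_i \nabla^2 g_i(x)$, which is $\psd 0$ because $y > 0$ and each $g_i$ is convex; set $B = \nabla G(x)$, which has full column rank by assumption; and set $D = S^{-1}Y \pd 0$ because $s, y > 0$. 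The structure matches Lemma~\ref{lemma-invMat} (up to the trivial rearrangement of blocks and the sign of the identity), so $Q + B^T D B \pd 0$, forcing $\delta x = 0$ and then $\delta s = \delta y = 0$. Hence $\nabla_{x,s,y} F$ is nonsingular, the Implicit Function Theorem applies, and $x(\mu), s(\mu), y(\mu)$ are as smooth as $F$ permits on a neighborhood of each $\mu_0 > 0$; since $\mu_0$ was arbitrary, this holds on all of $(0,\infty)$.

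The main obstacle I expect is not any single calculation but making the uniqueness argument airtight: the objective is concave but not strictly concave in $(x,s)$ jointly (the $x$-block of the Hessian vanishes), so one genuinely needs the constraint geometry — injectivity of $G$ on the feasible set — to transfer strict concavity from $s$ to the pair. Alternatively, and perhaps more cleanly, one can sidestep this by arguing uniqueness directly from the first-order system: if two solutions $(x^1,s^1,y^1)$ and $(x^2,s^2,y^2)$ both satisfy~\eqref{eq-reducedLagrangeCond}, a standard monotonicity/pairing argument (take differences, pair $\nabla G(x^1)^Ty^1 - \nabla G(x^2)^Ty^2 = 0$ against $x^1 - x^2$, use convexity of the $g_i$ and the $Sy = \mu e$ relations) forces $x^1 = x^2$, hence $s^1 = s^2$ and $y^1 = y^2$. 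I would present whichever of these is shorter given the lemmas already available; the rest of the proof is then routine bookkeeping with the Implicit Function Theorem.
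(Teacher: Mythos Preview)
Your core argument matches the paper's exactly: eliminate $\delta s = -\nabla G(x)\,\delta x$ from the homogeneous Jacobian system, feed $Q = \sum_i y_i \nabla^2 g_i(x)\psd 0$, $B = \nabla G(x)$, and $D = S^{-1}Y\pd 0$ into Lemma~\ref{lemma-invMat}, and conclude by the Implicit Function Theorem. The paper's proof stops there and does not argue uniqueness separately, implicitly leaning on the concavity established just before the theorem together with the local uniqueness the nonsingular Jacobian provides; your explicit treatment of uniqueness is therefore extra care rather than a different route. One small note: the injectivity-of-$G$ step you flag as the main obstacle is indeed shaky as stated, but you can bypass it entirely---and more simply than the monotonicity pairing---by observing that after substituting $s = -G(x)$ the objective in $x$ alone has Hessian $-\mu\sum_i s_i^{-1}\nabla^2 g_i(x) - \mu\,\nabla G(x)^T S^{-2}\nabla G(x)\nd 0$ by the full-column-rank hypothesis, so strict concavity in $x$ gives uniqueness directly.
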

\begin{proof}
We first observe that
\[
\left[ 
    \renewcommand{\arraystretch}{2}
    \renewcommand{\arraycolsep}{10pt}
    \begin{array}{ccc}
    \displaystyle \sum_{i=1}^m \, y_i \nabla^2 g_i(x) & 0 &
       \nabla G(x)^T \\
    \nabla G(x) & I & 0 \\
    0 & Y & S
    \end{array} \right]
    \left( \begin{array}{c} u \\ v \\ w \end{array} \right)
    = \left( \begin{array}{c} 0 \\ 0 \\ 0 \end{array} \right)
\]
gives $v = -\nabla G(x) u$ and $Yv + Sw = 0$. So we can replace
the bottom two equalities with 
\[
-Y \, \nabla G(x) \, u + Sw = 0, \mbox{ which is the same as }
    S^{-1} Y \, \nabla G(x) \, u - w = 0. 
\]
So the system reduces to
\begin{equation} \label{eq-reducedHessianSys}
\left[ 
    \renewcommand{\arraystretch}{2}
    \renewcommand{\arraycolsep}{10pt}
    \begin{array}{cc}
    \displaystyle \sum_{i=1}^m \, y_i \nabla^2 g_i(x) & 
        \nabla G(x)^T \\
    S^{-1}Y \nabla G(x) & -I 
    \end{array} \right]
    \left( \begin{array}{c} u \\ w \end{array} \right)
    = \left( \begin{array}{c} 0 \\ 0 \end{array} \right).
\end{equation}
We know that
\[
\sum_{i=1}^m \, y_i \nabla^2 g_i(x) \psd 0
\]
because each $g_i(x)$ is twice smooth and convex, and hence, each 
Hessian satisfies $\nabla^2 g_i(x) \psd 0$. Lemma~\ref{lemma-invMat}
now ensures that the matrix in~\eqref{eq-reducedHessianSys} is 
nonsingular with
\[
Q = \sum_{i=1}^m \, y_i \nabla^2 g_i(x) \psd 0, \;\;
B = \nabla G(x), \; \mbox{ and } \; D = S^{-1}Y \pd 0.
\]
So $u$ and $w$ are zero, and subsequently, so is $v$.
We conclude that $\nabla_{x,s} \, F(x,s,\mu)$ is nonsingular,
and the result follows from the Implicit Function Theorem.
\end{proof}
We comment that the functions $x(\mu)$, $s(\mu)$, and $y(\mu)$
inherit the analytic properties of $G(x)$ from the Implicit
Function Theorem. So these functions are, for instance,
analytic in the common situation that the component functions of 
$G(x)$ are analytic. We note further that $x$, $s$, and $y$ also 
depend on the data that defines $G(x)$ and $c$. So if $G(x)$ is
the affine transformation $G(x) = Ax - b$, then $x$, $s$, and $y$
depend on $\mu$ as well as the triple $(A,b,c)$, a fact that leads
to additional analyses, see~\cite{holder04,sonnevend86}.

Our artistic goals prompt us to define the central path as
\[
\bP(G(x), c) = \left\{ x(\mu) : \mu > 0 \right\},
\]
which, somewhat oddly, lacks an explicit dependence on $s(\mu)$ 
and/or $y(\mu)$. The elements of $s$ are slack variables in the language 
of optimization, and we have included them to simplify our calculations.
The elements of $y$ are Lagrange, or dual, variables in optimization,
and they give us a way to define optimality conditions algebraically. 
Explicitly including $s$ or $y$ as part of the central path places 
us in dimensions beyond visual appeal, and for this reason, our 
definition projects $\{ (x(\mu), \, s(\mu), y(\mu)) : \mu > 0 \}$ onto 
its $x$-components. We create art by controlling $G(x)$ and 
$c$, with each choice rendering a `brushstroke' in two or
three dimensions. We then assemble these brushstrokes into 
aesthetic ensembles.

\section{Convergence Analysis} \label{sec-conv}

We most commonly use the affine map $G(x) = Ax - b$ even though some of our
floral pieces more naturally align with $G(x)$ being a quadratic form. 
However, in these cases we show that an affine map still suffices, and
hence, the majority, and indeed all but one, of our models uses an affine map
computationally. The only computational quadratic case is one in which we can 
explicitly state the central path. Otherwise the affine map combines with our 
assumption of a bounded feasible region to give central paths in polytopes, 
an outcome that supports a meaningful and insightful convergence analysis. We 
specifically show that $x(\mu)$ converges as $\mu \uparrow \infty$ and as 
$\mu \downarrow 0$, and moreover, that these limits solve optimization problems 
that distinguish them as analytic centers. Our presentation is not new and is found,
for instance, in~\cite{roos2005}. 

We now assume that $G(x) = Ax - b$, from which we have that the necessary and sufficient 
conditions in~\eqref{eq-reducedLagrangeCond} become
\begin{equation} \label{eq-tradSys}
Ax + s = b, \; A^T y = c, \; Sy = \mu e, \; y > 0, \; s > 0.
\end{equation}
We first establish that solutions to this system are bounded if $s^T y$ is bounded.
\begin{theorem} \label{thm-boundedCP}
The set
\[
\cB = \{(x,y,s) : 
    Ax + s = b, \; A^T y = c, \; y > 0, \; s > 0, \; s^T y \le M \}
\]
is bounded for any $M > 0$.
\end{theorem}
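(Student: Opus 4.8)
The plan is to bound the $x$-, $s$-, and $y$-components separately, disposing of $x$ and $s$ with the compactness hypothesis and then reducing the bound on $y$ to the existence of a strictly interior point. First I would note that any $(x,y,s) \in \cB$ satisfies $Ax = b - s < b$ because $s > 0$, so $x$ lies in the feasible set $\{x : Ax \le b\}$, which is compact by assumption. Hence there is an $R > 0$ with $\|x\| \le R$ for every such $x$, and then $s = b - Ax$ is bounded as well, say $\|s\| \le R'$.

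The substantive step is bounding $y$. The difficulty is that the hypothesis $s^T y \le M$ does not by itself control $y$, since components of $s$ may be arbitrarily close to zero and a large $y_i$ can be offset by a tiny $s_i$. The remedy is to trade the variable slack $s$ for a fixed one. Using the nonempty strict interior, fix $\xb$ with $\sb := b - A\xb > 0$. For any $(x,y,s) \in \cB$ the dual equation $A^T y = c$ gives $\sb^T y = b^T y - \xb^T A^T y = b^T y - c^T \xb$ and, likewise, $s^T y = b^T y - c^T x$, so subtracting yields $\sb^T y = s^T y + c^T(x - \xb) \le M + \|c\|(R + \|\xb\|) =: M'$. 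Since $\sb > 0$ and $y > 0$, setting $\delta = \min_i \sb_i > 0$ we obtain $M' \ge \sb^T y = \sum_i \sb_i y_i \ge \delta \sum_i y_i$, hence $\sum_i y_i \le M'/\delta$, which bounds $y$.

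Combining the three bounds places $\cB$ inside a bounded set (the case $\cB = \emptyset$ being trivial). The main obstacle is the bound on $y$, and the crux is recognizing that replacing $s$ by a strictly interior slack $\sb$ whose entries are uniformly bounded away from zero converts the complementarity-type estimate $s^T y \le M$ into a genuine bound on $\|y\|_1$; this is precisely where the assumption that $\{x : G(x) < 0\}$ is nonempty enters. Everything else is routine norm estimation using the compactness of the feasible region already established for the model.
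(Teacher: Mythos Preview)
Your proof is correct. The core idea---replace the variable slack $s$ by a fixed strictly positive slack $\sb$ so that $\sb^T y$ controls $\|y\|_1$---is exactly the mechanism the paper uses, but the two arguments are organized differently. The paper picks a reference point $(\xh,\yh,\sh)$ from $\cB$ itself and exploits the orthogonality $(\sh-s)^T(\yh-y)=0$ symmetrically to bound both $y_i \le (\sh^T\yh+M)/\sh_i$ and $s_i \le (\sh^T\yh+M)/\yh_i$ in one stroke, deferring only the bound on $x$ to compactness. You instead dispatch $x$ and $s$ up front via compactness of $\{x:Ax\le b\}$ and then need only a strictly feasible \emph{primal} point $\xb$ (no dual reference $\yh$) to bound $y$; your identity $\sb^T y = s^T y + c^T(x-\xb)$ is the one-sided version of the paper's orthogonality relation. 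Your route is slightly more elementary in that it never needs a reference dual vector, at the cost of invoking compactness twice; the paper's route is more symmetric and would survive even without the standing compactness assumption on the primal feasible set, provided $\cB$ is nonempty.
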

\begin{proof}
Select $M > 0$ and $(\xh, \, \yh, \, \sh) \in \cB$. We then have for 
any other $(x,y,s) \in \cB$ that $\sh - s \in \mbox{col}(A)$ and 
$\yh - y \in \mbox{null}(A^T)$, and hence,
\[
0 = (\sh - s)^T(\yh - y) = \sh^T\yh - s^T\yh - \sh^Ty + s^Ty.
\]
So for any index $i$, we have
\[
\sh_i \, y_i \le \sh^T y + s^T \yh = \sh^T \yh + s^T y \le \sh^T \yh + M,
\]
and subsequently,
\[
0 \le y_i \le \frac{\sh^T \yh + M}{\sh_i}.
\]
An analogous argument shows that for any index $i$, 
\[
0 \le s_i \le \frac{\sh^T \yh + M}{\yh_i},
\]
which completes the proof because we also know that $x$ is 
bounded by assumption.
\end{proof}
\noindent We have the following corollary from the fact that 
$Sy = \mu e$ implies $s^T y = m \mu$.
\begin{corollary} \label{cor-boundedLevelSet}
If $\mu^k \downarrow 0$, then the solutions to~\eqref{eq-tradSys} 
are bounded.
\end{corollary}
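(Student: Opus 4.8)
The plan is to invoke Theorem~\ref{thm-boundedCP} with a single bound $M$ that works simultaneously for every index $k$. First I would record the elementary identity underlying the corollary's preamble: summing the $m$ scalar equations in $Sy = \mu e$ gives $s^T y = \sum_{i=1}^m s_i y_i = m\mu$. Hence any triple $(x,y,s)$ solving~\eqref{eq-tradSys} for a particular parameter value $\mu$ automatically satisfies $s^T y = m\mu$, so it lies in the set $\cB$ of Theorem~\ref{thm-boundedCP} as soon as $m\mu \le M$.

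Next I would exploit that $\mu^k \downarrow 0$ is a convergent, hence bounded, sequence: set $M := m \sup_k \mu^k < \infty$ (concretely $M = m\mu^1$ when the sequence is nonincreasing). Then for every $k$ the corresponding solution $(x(\mu^k), y(\mu^k), s(\mu^k))$ of~\eqref{eq-tradSys} satisfies $s(\mu^k)^T y(\mu^k) = m\mu^k \le M$, and therefore $(x(\mu^k), y(\mu^k), s(\mu^k)) \in \cB$. Since $\cB$ is bounded by Theorem~\ref{thm-boundedCP}, the entire family $\{(x(\mu^k), y(\mu^k), s(\mu^k)) : k \ge 1\}$ is contained in a bounded set, which is precisely the assertion.

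There is no genuine obstacle here; the only point requiring a moment's care is choosing the threshold $M$ uniformly in $k$ rather than one $M_k$ per term, and this is immediate because $\mu^k \to 0$ forces $\sup_k \mu^k$ to be finite. If one prefers not to read monotonicity into the notation $\mu^k \downarrow 0$, then replacing $\sup_k \mu^k$ by any finite upper bound of the sequence works identically.
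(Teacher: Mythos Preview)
Your proposal is correct and follows exactly the paper's approach: the paper simply states the corollary ``from the fact that $Sy = \mu e$ implies $s^T y = m\mu$,'' and you have spelled out precisely this, together with the choice of a uniform $M$ from the boundedness of the sequence $\mu^k$.
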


We now show that $x(\mu)$ converges as $\mu \downarrow 0$ and as 
$\mu \uparrow \infty$, with the former limit solving the linear program,
\[
\max \{ c^T x \, : \, Ax + s = b, \, s \ge 0 \}.
\]
The fact that we can solve a linear optimization problem by following
the central path as $\mu \downarrow 0$ is the historical reason for the
path's importance, and in particular, following the central path to a
solution leads to polynomial time algorithms. Our artistic pursuits do
not rely on this historical importance, but the convergence properties
help us understand the `starting' and `ending' points of the paths 
used to create images. Both limits are analytic centers.

Our arguments rely on the concept of a support set, which for vector 
$v$ is
\[
\sigma(v) = \{ i : v_i \neq 0 \}.
\]
So the support set of $v$ is an index set containing the locations
at which $v$ is non-zero. We extend this notation in two ways. First,
the complement of $\sigma(v)$ is 
$\neg \sigma(v) = \{1, 2, \ldots, n\} \setminus \sigma(v)$ under the
assumption that $v$ is of length $n$. Second, $v_{\sigma(v)}$ is the
subvector of $v$ containing only the non-zero elements of $v$ (with order
preserved), and similarly, $v_{\neg \sigma(v)}$ is a vector of zeros. 
\begin{theorem} \label{thm-converge}
The following limits exist,
\[
\lim_{\mu \downarrow 0} \, (x(\mu), s(\mu)) = (x^*, \, s^*) \;\; \mbox{ and } \;\;
\lim_{\mu \uparrow \infty} \, (x(\mu), s(\mu)) = (x^c, \, s^c),
\]
and the first of these solves $\max \{ c^T x \, : \, Ax + s = b, \, s \ge 0 \}$.
We further have that $(x^*, s^*)$ is the unique solution to
\begin{equation} \label{eq-primalOptAC}
\max \left\{ \sum_{i \in \sigma(s^*)} \ln(s_i) \, : 
        Ax + s = b, \, s_{\sigma(s^*)} > 0, \, s_{\neg\sigma(s^*)} = 0, \, 
        c^T x = c^T x^* \right\}, 
\end{equation}
and that $(x^c, s^c)$ is the unique solution to
\begin{equation} \label{eq-primalAc}
\max \left\{ \sum_{i=1}^{m} \ln(s_i) \, : \, Ax + s = b, \, s > 0 \right\}.
\end{equation}
\end{theorem}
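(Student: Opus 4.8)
\medskip

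The plan is to treat the two limits in parallel, since both amount to showing that a bounded, monotone-in-an-appropriate-sense family of analytic centers converges to the analytic center of a face of the feasible polytope. I would organize the argument around three ingredients: (i) boundedness, already supplied by Theorem~\ref{thm-boundedCP} and Corollary~\ref{cor-boundedLevelSet} for $\mu\downarrow 0$, and by a dual-side version of the same inner-product trick for $\mu\uparrow\infty$; (ii) identification of every accumulation point as a maximizer of a strictly concave logarithmic objective over a polyhedral slice, which forces uniqueness of the accumulation point and hence convergence; and (iii) a partition-of-indices argument to pin down which coordinates of $s^*$ are positive in the limit $\mu\downarrow 0$.

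\medskip

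For the limit $\mu\downarrow 0$, I would first extract any accumulation point $(x^*,s^*)$ of $(x(\mu),s(\mu))$ along a sequence $\mu^k\downarrow 0$; boundedness of the $y$-iterates from Corollary~\ref{cor-boundedLevelSet} lets me also assume $y(\mu^k)\to y^*$. Passing to the limit in~\eqref{eq-tradSys} gives $Ax^*+s^*=b$, $A^Ty^*=c$, $S^*y^*=0$, $s^*\ge 0$, $y^*\ge 0$, so $(x^*,s^*)$ is primal feasible, $y^*$ is dual feasible, and complementary slackness holds --- hence $(x^*,s^*)$ solves the linear program $\max\{c^Tx: Ax+s=b,\ s\ge 0\}$, and $c^Tx^*$ equals the common optimal value. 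To see that $(x^*,s^*)$ is the \emph{analytic center of the optimal face}, i.e.\ the unique solution of~\eqref{eq-primalOptAC}, I would use the standard weighted-log inequality: for any optimal $(x,s)$ with $s_{\sigma(s^*)}>0$ and $s_{\neg\sigma(s^*)}=0$, compare against $(x(\mu^k),s(\mu^k))$ using concavity of $\ln$ and the identity $s(\mu^k)^Ty(\mu^k)=m\mu^k$ together with the orthogonality $(s(\mu^k)-s)^T(y(\mu^k)-y^*)=0$. The key estimate is that $\sum_i \ln s_i(\mu^k) - \sum_i \ln s_i$ is controlled by a quantity that, after dividing by the number of positive-limit coordinates, has a definite sign in the limit; this shows $\sum_{i\in\sigma(s^*)}\ln s_i^* \ge \sum_{i\in\sigma(s^*)}\ln s_i$, so $(x^*,s^*)$ maximizes~\eqref{eq-primalOptAC}, and strict concavity of the logarithm on the (nonempty, bounded) feasible set of~\eqref{eq-primalOptAC} makes the maximizer unique. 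Since every accumulation point satisfies the same characterization, the limit exists.

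\medskip

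For the limit $\mu\uparrow\infty$, boundedness is the first thing to establish on its own, since Theorem~\ref{thm-boundedCP} bounds $s^Ty=m\mu$ only when $\mu$ is bounded. Here I would instead observe that $x(\mu)$ stays in the fixed compact feasible region by assumption, that $s(\mu)=b-Ax(\mu)$ is therefore bounded, and that $y(\mu)=\mu S(\mu)^{-1}e$ together with $A^Ty(\mu)=c$ and $s(\mu)$ bounded forces $s(\mu)$ bounded \emph{away from} $0$: if some $s_i(\mu^k)\to 0$ along $\mu^k\to\infty$ while the rest stay bounded, then $y_i(\mu^k)=\mu^k/s_i(\mu^k)\to\infty$ faster than any competing term can cancel in $A^Ty=c$, a contradiction with $c$ fixed and $A$ of full column rank. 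Hence $(x(\mu),s(\mu))$ lies in a compact subset of the strict interior and has accumulation points $(x^c,s^c)$ with $s^c>0$. Writing the optimality condition as $A^T(\mu^{-1}y(\mu))=\mu^{-1}c$ and noting $\mu^{-1}y(\mu)=S(\mu)^{-1}e$, the limit gives $A^T S_c^{-1}e=0$, which is exactly the first-order condition for maximizing $\sum_i\ln s_i$ over $\{Ax+s=b,\ s>0\}$; strict concavity again yields uniqueness of the maximizer, hence convergence of the whole path.

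\medskip

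The main obstacle I anticipate is step (iii): rigorously separating the coordinates of $s(\mu)$ that converge to a positive limit from those that converge to $0$ as $\mu\downarrow 0$, and simultaneously showing the dual partition is complementary (strict complementarity of the limit pair $(s^*,y^*)$). The clean way to get this is the Goldman--Tucker strict complementarity theorem for linear programming, which guarantees the existence of an optimal primal--dual pair with $s_i^*+y_i^*>0$ for every $i$; combined with the weighted-log comparison above, this both forces $\sigma(s^*)=\neg\sigma(y^*)$ and makes the feasible set of~\eqref{eq-primalOptAC} have nonempty relative interior, so the strictly concave maximization is genuinely attained at an interior-of-face point. If one wishes to avoid invoking Goldman--Tucker, an alternative is to derive strict complementarity directly from the limiting behavior of $S(\mu)y(\mu)=\mu e$ by a careful order-of-magnitude analysis of each coordinate pair $(s_i(\mu),y_i(\mu))$; I would prefer to cite the classical result, as~\cite{roos2005} does, since the bookkeeping is otherwise delicate and tangential to the paper's aims.
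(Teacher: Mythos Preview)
Your proposal is broadly sound and reaches the right conclusions, but it diverges from the paper's proof in two places, and in one of those places your argument as written has a gap.

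\textbf{Where the paper is more direct.} The paper's proof is organized around a single device used symmetrically for both limits: the orthogonality relation $(s^k-\bar s)^T(y^k-\bar y)=0$ for any primal--dual feasible $(\bar x,\bar s,\bar y)$, followed by the arithmetic--geometric mean inequality. For $\mu\downarrow 0$, taking $(\bar s,\bar y)=(s^*,y^*)$ and dividing through by $\mu^k$ yields
\[
m \;=\; \sum_{i\in\sigma(s^*)}\frac{s_i^*}{s_i^k}\;+\;\sum_{i\in\sigma(y^*)}\frac{y_i^*}{y_i^k},
\]
and since each summand tends to $1$, there are exactly $m$ of them; hence $\sigma(s^*)$ and $\sigma(y^*)$ partition $\{1,\dots,m\}$. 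This \emph{is} the self-contained derivation of strict complementarity you describe as ``delicate bookkeeping'' and propose to replace by Goldman--Tucker. Citing Goldman--Tucker works but is less clean here: it gives the existence of \emph{some} strictly complementary optimal pair and the optimal partition $(B,N)$, after which you still need to argue that the specific limit $(s^*,y^*)$ has $\sigma(s^*)=B$; the paper's identity delivers this in one line. The same orthogonality identity, now with an arbitrary optimal $(\bar s,\bar y)$ and AM--GM, gives $\prod_{i\in\sigma(s^*)}\bar s_i\le\prod_{i\in\sigma(s^*)}s_i^k$, which is precisely your ``weighted-log comparison'' made explicit.

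\textbf{Where your argument has a gap.} For $\mu\uparrow\infty$ you argue that $s(\mu)$ is bounded away from zero because a single $s_i(\mu^k)\to 0$ would make $y_i(\mu^k)$ blow up ``faster than any competing term can cancel in $A^Ty=c$.'' This is not rigorous: several coordinates $s_i$ may tend to zero simultaneously, and the corresponding $y_i\to\infty$ can cancel in $A^Ty=c$ since $A^T$ has a nontrivial kernel when $m>n$. The paper avoids this by again using the orthogonality identity with a \emph{strictly} feasible $(\bar x,\bar s,\bar y)$: one obtains
\[
\frac{\bar s^T\bar y}{\mu^k}+m \;=\; \sum_i\frac{\bar y_i}{y_i^k}+\sum_i\frac{\bar s_i}{s_i^k},
\]
and since the left side is bounded and each $\bar s_i>0$, every $s_i^k$ must stay bounded away from $0$, giving $s^c>0$ and, after AM--GM, that $(x^c,s^c)$ maximizes $\sum_i\ln s_i$. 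Your alternative route for this limit---passing to the limit in $A^T S(\mu)^{-1}e=\mu^{-1}c$ to obtain the first-order condition $A^T(S^c)^{-1}e=0$---is perfectly valid and arguably slicker than AM--GM, but it only becomes available \emph{after} $s^c>0$ has been established by some other means.
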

\begin{proof}
Assume $\mu^k \downarrow 0$ and set 
$(x^k, y^k, s^k) = (x(\mu^k), \, y(\mu^k), \, s(\mu^k) )$.
This sequence is bounded by Corollary~\ref{cor-boundedLevelSet}, 
and hence, it has a cluster point, say $(x^*, y^*, s^*)$. Without loss 
of generality, we assume $(x^k, y^k, s^k) \rightarrow (x^*, y^*, s^*)$ 
as $k \rightarrow \infty$. We have from the fact that $(x^k, y^k, s^k)$ 
satisfies~\eqref{eq-tradSys} that
\[
Ax^* + s^* = b, \; A^T y^* = c, \; s^* \ge 0, \;  y^* \ge 0, \;
    \mbox{ and } \; (s^*)^T x^* = 0.
\]
These are the necessary and sufficient conditions showing 
that $(x^*, s^*)$ and $y^*$ respectively solve
\begin{equation} \label{eq-lpOpt}
\max \{ c^T x \, : \, Ax + s = b, \, s \ge 0 \}
\;\; \mbox{ and } \;\;
\min \{ b^T y \, : \, A^T y = c, \; y \ge 0 \}.
\end{equation}
Notice that $s^k - s^* \in \mbox{col}(A)$ and $y^k - y^* \in \mbox{null}(A^T)$,
and hence, from the fact that $s_i^k y_i^k = \mu^k$ for any index $i$, we have
\begin{eqnarray*}
0 & = & (s^k - s^*)^T (y^k - y^*) \\[4pt]
  & = & (s^k)^T y^k - (s^*)^T y^k - (s^k)^T y^* + (s^*)^T y^* \\[4pt]
  & = & m \mu^k - (s^*)^T y^k - (s^k)^T y^*.
\end{eqnarray*}
We subsequently have
\[
m = \sum_{i \in \sigma(s^*)} \frac{s^*_i}{s^k_i} 
    \; + \sum_{i \in \sigma(y^*)} \frac{y^*_i}{y^k_i},
\]
from which we gain that $\sigma(s^*)$ and $\sigma(y^*)$ partition
$\{1, 2, \ldots, m\}$. So $(x^*, y^*, s^*)$ is a strictly
complementary solution to the primal-dual pair in~\eqref{eq-lpOpt},
from which we know that if $(\xb, \, \yb, \, \sb)$ is an optimal 
solution to~\eqref{eq-lpOpt}, then we are guaranteed to have 
$\sigma(\sb) \subseteq \sigma(s^*)$ and
$\sigma(\yb) \subseteq \sigma(y^*)$.

To see that $(x^*, s^*)$ uniquely solves~\eqref{eq-primalOptAC}, let
$(\xb, \, \yb, \, \sb)$ be an arbitrary optimal solution 
to~\eqref{eq-lpOpt}. Then an analogous argument to that above shows 
that
\[
m = \sum_{i \in \sigma(s^*)} \frac{\sb_i}{s^k_i} 
    \; + \sum_{i \in \sigma(y^*)} \frac{\yb_i}{y^k_i},
\]
and the arithmetic-geometric mean inequality gives
\[
\prod_{i \in \sigma(s^*)} \left( \frac{\sb_i}{s^k_i} \right)
    \; \prod_{i \in \sigma(y^*)} \left( \frac{\yb_i}{y^k_i} \right)
\le \; \frac{1}{m} \left(
    \sum_{i \in \sigma(s^*)} \frac{\sb_i}{s^k_i} 
    \; + \sum_{i \in \sigma(y^*)} \frac{\yb_i}{y^k_i} \right) = 1.
\]
One choice for $\yb$ is $y^*$, and in this case we have
\[
\prod_{i \in \sigma(s^*)} \sb_i \;\; \le \; \prod_{i \in \sigma(s^*)} s^k_i.
\]
So as $\mu^k \downarrow 0$, we find that $(x^*, s^*)$ solves
\[
\max \left\{ \prod_{i \in \sigma(s^*)} \!s_i :
        Ax + s = b, \, s_{\sigma(s^*)} > 0, \, s_{\neg\sigma(s^*)} = 0, \, 
        c^T x = c^T x^* \right\},
\]
which means that it also solves
\[
\max \left\{ \sum_{i \in \sigma(s^*)} \ln(s_i) :
        Ax + s = b, \, s_{\sigma(s^*)} > 0, \, s_{\neg\sigma(s^*)} = 0, \, 
        c^T x = c^T x^* \right\}.
\]
The assumed full column rank of $A$ ensures that $x$ and $s$ are in a 
one-to-one relationship, and specifically, $x = (A^TA)^{-1}A^T(b-s)$
is the same as $Ax + s = b$. Moreover, the constraint 
$s_{\neg\sigma(s^*)} = 0$ means that these variables are fixed, 
and hence, $x$ is in a one-to-one relationship with $s_{\sigma(s^*)}$. 
This means that $s^*_{\sigma(s^*)}$ solves
\begin{eqnarray*}
\lefteqn{
\max \Bigg\{ \sum_{i \in \sigma(s^*)} \ln(s_i) : 
    A(A^TA)^{-1}A^T(b-s) = b-s, } \\
& & \hspace*{40pt} s_{\sigma(s^*)} > 0, \, s_{\neg\sigma(s^*)} = 0, \, 
    c^T (A^TA)^{-1}A^T(b-s) = c^T x^* \Bigg\}.
\end{eqnarray*}
The  Hessian of the objective function is $-S_{\sigma(s^*)}^{-2} \nd 0$.
So the objective function is strictly concave and the solution is unique.
We consequently have that $(x^*, s^*)$ solves~\eqref{eq-primalOptAC}.

Now assume $\mu^k \uparrow \infty$ and again set 
$(x^k, y^k, s^k) = (x(\mu^k), \, y(\mu^k), \, s(\mu^k) )$.
The sequence $(x^k, s^k)$ is in a compact feasible region by assumption,
and it therefore has a cluster point, say $(x^c, s^c)$. We assume without
loss of generality that $(x^k, s^k) \rightarrow (x^c, s^c)$ is a convergent
subsequence as $k \rightarrow \infty$. Let $(\xb, \yb, \sb)$ be feasible 
to the linear programs in~\eqref{eq-lpOpt}. Then $s^k - \sb \in \mbox{col}(A)$ 
and $y^k - \yb \in \mbox{null}(A^T)$, and similar to the orthogonal 
arguments above, we have
\[
\frac{\sb^T \yb}{\mu^k} + m = 
    \sum_{i=1}^m \, \frac{\yb_i}{y^k_i} + 
    \sum_{i=1}^m \, \frac{\sb_i}{s^k_i}.
\]
Note that $s^k_i y^k_i = \mu^k$, so $s^k$ being bounded forces
$y^k_i \rightarrow \infty$ as $k \rightarrow \infty$. We now have that as
$k \rightarrow \infty$,
\[
m = \sum_{i=1}^m \, \frac{\sb_i}{s^c_i},
\]
and the arithmetic-geometric mean inequality gives,
\[
\prod_{i=1}^m \sb_i \le \prod_{i=1}^m s^c_i.
\]
So $(x^c, s^c)$ solves,
\[
\max \left\{ \prod_{i=1}^m s_i \; : \; Ax + s = b, \; s \ge 0 \right\},
\]
and it thus also solves,
\[
\max \left\{ \sum_{i=1}^m \ln(s_i) \; : \; Ax + s = b, \; s > 0 \right\}.
\]
We express this problem uniquely in terms of $s$ using the full column rank 
assumption of $A$, resulting in
\[
\max \left\{ \sum_{i=1}^m \ln(s_i) \; : \; A(A^TA)^{-1}A^T(b-s) = b-s, \; s > 0 \right\}.
\]
The Hessian of the objective function is $-S^{-2} \nd 0$, from 
which we know that $(x^c, s^c)$ is the unique solution 
to~\eqref{eq-primalAc}. So all convergent subsequences of
$(x(\mu^k), \, s(\mu^k) )$ converge to $(x^c, s^c)$, and hence,
$(x(\mu^k), \, s(\mu^k) )$ converges to $(x^c, s^c)$. 
\end{proof}
\noindent Theorem~\ref{thm-converge} guarantees that the closure of the 
central path is
\[
\overline{\bP(G(x), c)} = \bP(G(x), c) \cup \{ x^c, x^*\},
\]
and we assume computationally that paths start at $x^c$, which will be the zero vector by 
design, and ends at $x^*$, i.e that $\mu$ starts at infinity and decreases
to zero.

\subsection{Computing Central Paths} \label{subsec-computingCP}

Our description of the central path gives mathematical assurances, but 
we require a calculation method to generate images. 
The defining system with $G(x) = Ax - b$ is
\[
F(x,s,y,\mu) = \left(
        \begin{array}{c} 
            \displaystyle A^Ty - c \\[6pt]
            \displaystyle Ax + s - b \\[6pt]
            Sy - \mu e
        \end{array}
        \right)
    = \left( \begin{array}{c} 0 \\[6pt] 0 \\[6pt] 0 \end{array} \right),
\;\; \mbox{ with } \;\; (y,s) > 0.
\]
We use Newton's method to estimate $x(\mu)$, $s(\mu)$, and $y(\mu)$ 
from any $x$, $s$, and $y$ by iteratively solving
\begin{equation} \label{eq-stepSys}
\left.
\begin{array}{rcl}
\nabla_{x,s,y} F(x,s,y,\mu) 
    \left( \begin{array}{c} \Delta x \\ \Delta s \\ \Delta y
            \end{array} \right)
& = & \left[ \begin{array}{ccc}
    0 & 0 & A^T \\ A & I & 0 \\ 0 & Y & S \end{array} \right]
    \left( \begin{array}{c} \Delta x \\ \Delta s \\ \Delta y
            \end{array} \right)  \\[26pt]
& = &  \left(
        \begin{array}{c} 
            \displaystyle c - A^T y \\[2pt]
            \displaystyle b - Ax - s \\[2pt]
            \displaystyle \mu e - Sy
        \end{array}
        \right) = -F(x,s,\mu)
\end{array}
\right\}
\end{equation}
and updating $x$, $s$, and $y$ so that
\[
x \leftarrow x + \alpha \, \Delta x, \;\;
s \leftarrow s + \alpha \, \Delta s,
\;\; \mbox{ and } \;\;
y \leftarrow y + \beta \, \Delta y.
\]
The stepsizes $\alpha$ and $\beta$ guarantee nonnegativity by setting 
them to
\begin{eqnarray*}
\lefteqn{
\alpha = \omega \, \min \left\{ 1, \, \min \left\{ x_i / \Delta x_i : 
    \Delta x_i < 0 \right\} \right\} } \\[6pt]
& & \hspace*{40pt} \mbox{ and } \;
\beta = \omega \, \min \left\{ 1, \, \min \left\{ y_i / \Delta y_i : 
    \Delta y_i < 0 \right\} \right\},
\end{eqnarray*}
with $\omega$ being a positive scalar that is less than one.
The value of either inner minimum is $\infty$ if $\Delta x \ge 0$
or $\Delta y \ge 0$, respectively.

Calculating $\Delta x$, $\Delta s$, and $\Delta y$ reduces to solving
an $n \times n$ positive definite system to compute $\Delta x$, from
which we then calculate $\Delta s$ and $\Delta y$.
We first notice the bottom two equations of the linear 
system in~\eqref{eq-stepSys} give
\[
A \Delta x + \Delta s = b - Ax - s
   \Rightarrow A^TS^{-1}Y \left( A \Delta x + \Delta s \right)
                    = A^T S^{-1}Y \left( b - Ax - s \right)
\]
and
\[
Y \Delta s + S \Delta y = \mu e - Sy
   \Rightarrow A^TS^{-1} \left(Y \Delta s + S \Delta y \right)
                    = A^TS^{-1} \left( \mu e - Sy \right). 
\]
Using $A^T \Delta y = c - A^Ty$ from the first equation
in~\eqref{eq-stepSys} and combining these last two implications, we 
find that,
\[
A^T S^{-1}YA \, \Delta x = A^T S^{-1} Y \left( b - Ax - s \right)
    + c - \mu A^T S^{-1} e.
\]
Our geometry permits a feasible starting point $(x,s)$, and
in this case we know that $Ax + s = b$ with $s > 0$. In this situation
we have $A \Delta x + \Delta s = 0$ from the second equation 
in~\eqref{eq-stepSys}, and hence,
$(x + \alpha \Delta x, \, s + \alpha \Delta s)$ remains feasible
after the update because
\[
A (x + \alpha \Delta x) + (s + \alpha \Delta s)
    = Ax + s = b.
\]
Moreover, the equation defining $\Delta x$ becomes
\begin{equation} \label{eq-DeltaxStep}
A^T S^{-1}YA \, \Delta x = c - \mu A^T S^{-1} e.
\end{equation}
We have $A^T S^{-1} YA \pd 0$ because if $v$ is a nonzero vector, then
\[
v^T A^T S^{-1} YA \, v = (\sqrt{S^{-1}Y}A\,v)^T(\sqrt{S^{-1}Y}A\,v)
    = \left\| \sqrt{S^{-1}Y}A \, v \right\|^2 > 0,
\]
with $\sqrt{S^{-1}Y}$ being the diagonal matrix with diagonal elements 
$\sqrt{y_i / s_i}$. The positivity of $\left\| \sqrt{S^{-1}Y}A \, v \right\|$
follows from the fact that $A$ has full column rank, and hence, neither
$Av$ nor $\sqrt{S^{-1}Y}A \, v$ are zero unless $v$ is zero.

Solving~\eqref{eq-DeltaxStep} is efficient and numerically stable,
see~\cite{wright1994,wright1999}, and once we have $\Delta x$, we set
\[
\Delta s = -A \, \Delta x \;\; \mbox{ and } \;\; \mbox{ and } \;\;
\Delta y = \mu S^{-1} e - y - S^{-1} Y \Delta s.
\]
The result is an efficient and numerically stable iterative process
that efficiently converges to $x(\mu)$, $s(\mu)$, and $y(\mu)$. 
We cease iterations once
\begin{equation} \label{eq-tolPath}
\max \left\{ \| F(x,s,\mu) \|, \; \alpha \, \| \Delta x \| \right\}
    < \varepsilon, 
\end{equation}
with $\varepsilon$ being a convergence tolerance. 

Digitally rendering a central path necessitates that we compute
a sequence, say $\mu^k$, along with estimates for $x(\mu^k)$.
We denote the estimates of $x(\mu^k)$, $s(\mu^k)$, and
$y(\mu^k)$ as $x^k$, $s^k$, and $y^k$. The inequality in~\eqref{eq-tolPath} 
ensures that 
\[
\| x^k - x(\mu^k)\| = \alpha \| \Delta x \| \le \varepsilon.
\]
So we guarantee accuracy to the central path at $\mu^k$ by decreasing 
$\varepsilon$, but this assurance leaves open the concern that we could
lose accuracy as a graphics package interpolates estimates to create a
curve. A common measure of how close a point $(x, s, y)$ is
to $(x(\mu^k), s(\mu^k), y(\mu^k))$ is
\begin{equation} \label{eq-standardMetric}
\| Sy - S(\mu^k) \, y(\mu^k) \| = \| Sy - \mu^k e \|,
\end{equation}
see~\cite{vavasis96}, and this measure promotes that we compute 
$\mu^k$ values so that
\[
\max_{\theta} \; \left\{
\left\| \left( (1 - \theta)S^k + \theta S^{k+1} \right)
        \left( (1 - \theta)y^k + \theta y^{k+1} \right)
        - \mu^k e \right\| \, : \; 0 \le \theta \le 1 \right\}
        \le \delta,
\]
with $\delta$ being an allowed deviation from the central path. A straightforward
calculation shows that the maximum is achieved at $\theta = 1/2$, and we initially
devised a recursive bisection technique to ensure that
\begin{equation} \label{eq-midPointbound}
\frac{1}{4} \, 
\left\| \left( S^k + S^{k+1} \right) \left( y^k + y^{k+1} \right)
        - \mu^k e \right\| \le \delta.
\end{equation}
The process starts with $\mu^1$ and $\mu^2$ being respectively large and small
enough so that $x^1 \approx x^c$ and $x^2 \approx x^*$. These approximations
satisfy~\eqref{eq-tolPath}, and if they also satisfy~\eqref{eq-midPointbound},
then we are done. We otherwise compute estimates 
\begin{eqnarray*}
\xh & \approx & x( (\mu^1 + \mu^2) / 2), \\
\sh & \approx & s( (\mu^1 + \mu^2) / 2), \;\; \mbox{ and } \\
\yh & \approx & y( (\mu^1 + \mu^2) / 2),
\end{eqnarray*}
that satisfy~\eqref{eq-tolPath} and renumber so that
\begin{eqnarray*}
(x^3, \, s^3, \, y^3, \mu^3) & \leftmapsto &  (x^2, \, s^2, \, y^2, \mu^2) \;\; \mbox{ and} \\
(x^2, \, s^2, \, y^2, \mu^2) & \leftmapsto &  (\xh, \, \sh, \, \yh, (\mu^1 + \mu^2) /2 ).
\end{eqnarray*}
The process repeats on all $[\mu^k, \mu^{k+1}]$ intervals until each
satisfies~\eqref{eq-midPointbound}.

The $\mu^k$ list from the bisection technique with the standard measure
in~\eqref{eq-standardMetric} proves problematic. The issue is that our
central paths are approximated by the primal estimates, $x^k$, whereas
the dual estimates, $y^k$, dominate the decisions of whether or not to
bisect the intervals. The curvature of the dual central path does not 
align with the curvature of the primal, at least in our geometries, and
the result is a list of $\mu^k$ values that over represent low curvature
portions of the (primal) central path, see Figure~\ref{fig-goodSpacing}.

We correct the overrepresentation of low curvature portions of the central
path by estimating linearity with finite differences. If $x^k$, $x^{k+1}$,
and $x^{k+2}$ are consecutive estimates, then 
\[
T^k = \frac{x^{k+1}-x^k}{\|x^{k+1} - x^k \|}
\;\; \mbox{ and } \;\;
T^{k+1} = \frac{x^{k+2}-x^{k+1}}{\|x^{k+2} - x^{k+1} \|}
\]
approximate the unit tangent vectors of the central path at $\mu^k$ and 
$\mu^{k+1}$. So estimates of the linearity of the central path over 
the intervals $[\mu^k, \, \mu^{k+1}]$ and $[\mu^{k+1}, \, \mu^{k+2}]$ are 
respectively
\[
\kappa^k = \frac{\|T^{k+1} - T^{k}\|}{\|x^{k+1} - x^k\|}
\;\; \mbox{ and } \;\;
\kappa^{k+1} = \frac{\|T^{k+1} - T^{k}\|}{\|x^{k+2} - x^{k+1}\|}.
\]
If $\kappa^k > \delta$, then we assume the central path is not sufficiently
linear over $[\mu^k, \, \mu^{k+1}]$ and we bisect this interval by inserting
the midpoint to our list of $\mu^k$ values. The same occurs if 
$\kappa^{k+1} > \delta$, but in this case we bisect $[\mu^{k+1}, \, \mu^{k+2}]$.

The bisection method based on our curvature estimates reduces the over 
representation of the central path on low curvature segments and increases 
accuracy on high curvature portions. Figure~\ref{fig-goodSpacing} compares
the two techniques. The figure on the left sets $\delta = 0.08$
in~\eqref{eq-midPointbound}, and the red central paths are approximated
with $23$ and $21$ points. Notice how the majority of points accumulate 
near the center of the square and that the linear interpolation is less 
accurate as the path converges toward the upper right. The figure on the
right instead bounds our curvature estimates by $0.5$ and results in $8475$
and $4208$ points respectively for the top and bottom paths. Although the
curvature tactic has more points than the bisection technique, we have the 
favorable outcome that the point estimates accumulate in regions of higher 
curvature.

\begin{figure}
\hfill
\includegraphics[width=0.4\linewidth]{./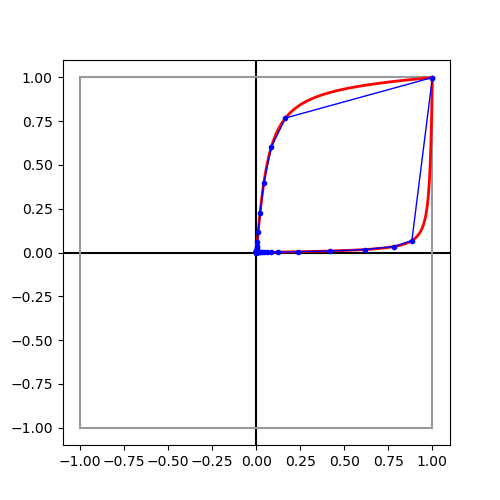}
\hfill
\includegraphics[width=0.4\linewidth]{./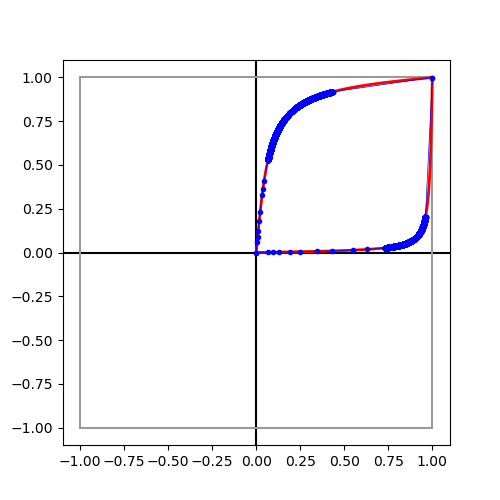}
\hfill
\caption{The figure on the left spaces $\mu$ with~\eqref{eq-midPointbound},
    and the figure on the right spaces $\mu$ by bounding (estimates) of
    curvature.} \label{fig-goodSpacing}
\end{figure}

\subsection{Geometric Considerations} \label{sec-affineTransformation}

Affine transformations aid our geometric effort, and we establish that
the central path of a transformed problem is the same as the transformation of
the central path. This result is not unforeseen or unexpected,
but it should not be assumed without proof because, as previously
noted, the central path depends on the algebraic description of the problem 
and not on the geometry of the problem. We illustrate this fact by observing
that if
\[
A = \left[\begin{array}{rr} 1 & 0 \\ -1 & 0 \\ 0 & 1 \\ 0 & -1 
    \end{array} \right]
\;\;\mbox{ and }\;\;
\hat{A} = \left[\begin{array}{rr} 1 & 0 \\ -1 & 0 \\ 0 & 1 \\ 0 & -1 \\ 1/2 & 1/2
     \end{array} \right],
\]
then $Ax \le e$ and $\hat{A}x \le e$ are individually true if and only if
$-1 \le x_1 \le 1$ and $-1 \le x_2 \le 1$. So
\[
\{ x : Ax \le e \} = \{ x : \hat{A}x \le e \}.
\]
However, setting $G(x) = Ax - b$ and $\hat{G}(x) = \hat{A}x - b$, we find
that $\bP(G(x),c) \neq \bP(\hat{G}(x),c)$, as illustrated in 
Figure~\ref{fig-neqCPs} with $c^T = (1, 2)$. The basic idea is that
geometric intuition should draw caution because geometric perspective 
is coupled with, but disjoint from, algebraic exposition.

\begin{figure}[ht]
\begin{center}
\includegraphics[width=0.3\linewidth]{./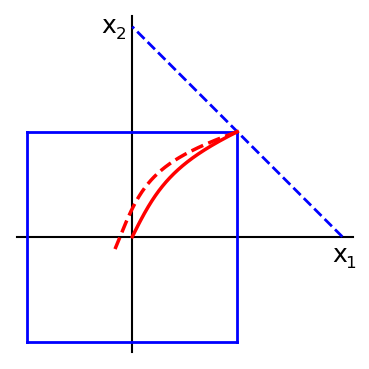}
\caption{The solid central path ignores the dashed redundant 
constraint, whereas the dashed central path includes the redundant
constraint.} \label{fig-neqCPs}
\end{center}
\end{figure}

Let $T(x) = Bx + d$ be an affine transformation with $B$ being an
invertible $n \times n$ matrix. Applying $T(x)$ to an element of
the polytope $\{x : Ax \le b\}$, we find that $T(x) = Bx + d = z$ if
and only if $x = B^{-1}(z-d)$. This relationship gives the algebraic
description of the image polytope as $\{z : AB^{-1} z \le b + AB^{-1}d\}$.
We now prove that central paths in $\{ x : Ax \le b\}$ correspond with
central paths in the image polytope.

\begin{theorem} \label{thm-affineScaling}
Let $B$ be an invertible $n \times n$ matrix, and let $T(x) = Bx + d$.
Set $G(x) = Ax - b$ and $\hat{G}(z) = AB^{-1} z - b - AB^{-1}d$. Then
\[
T\left(\bP(G(x), \, c)\right) = \bP\left(\hat{G}(z), \, (B^{-1})^Tc\right).
\]
\end{theorem}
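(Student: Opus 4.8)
The plan is to work directly with the defining system $F = 0$ for each polytope and show that a solution for one polytope maps, under $T$ or its inverse, to a solution for the other. Recall from~\eqref{eq-tradSys} that $x(\mu)$ together with its slack $s(\mu)$ and dual $y(\mu)$ is characterized by $Ax + s = b$, $A^Ty = c$, $Sy = \mu e$, $y > 0$, $s > 0$, and that Theorem~\ref{thm-pathSmoothness} guarantees this solution is unique for each $\mu > 0$. The central path in the image polytope is governed by the analogous system with $A$ replaced by $\hat{A} = AB^{-1}$, $b$ replaced by $\hat{b} = b + AB^{-1}d$, and $c$ replaced by $\hat{c} = (B^{-1})^Tc$.

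First I would fix $\mu > 0$ and let $(x,s,y) = (x(\mu), s(\mu), y(\mu))$ solve the system for $(A,b,c)$. The natural guess for the image data is $\hat{x} = Bx + d = T(x)$, $\hat{s} = s$ (the slacks should be unchanged, since $AB^{-1}\hat{x} - \hat{b} = AB^{-1}(Bx+d) - b - AB^{-1}d = Ax - b$), and $\hat{y} = y$. I would then verify the three equalities: $\hat{A}\hat{x} + \hat{s} = \hat{b}$ is the computation just sketched; $\hat{A}^T\hat{y} = (B^{-1})^T A^T y = (B^{-1})^T c = \hat{c}$; and $\hat{S}\hat{y} = Sy = \mu e$. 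The positivity constraints $\hat{s} > 0$ and $\hat{y} > 0$ are immediate because the vectors are literally unchanged. Hence $(\hat{x}, \hat{s}, \hat{y})$ solves the system for $(\hat{A}, \hat{b}, \hat{c})$ at parameter $\mu$, so by the uniqueness from Theorem~\ref{thm-pathSmoothness} it equals $(x_{\hat{G}}(\mu), s_{\hat{G}}(\mu), y_{\hat{G}}(\mu))$. In particular $x_{\hat{G}}(\mu) = T(x_G(\mu))$ for every $\mu > 0$, which is exactly $T(\bP(G(x),c)) \subseteq \bP(\hat{G}(z), (B^{-1})^Tc)$.

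For the reverse inclusion I would run the identical argument starting from a solution of the system for $(\hat{A}, \hat{b}, \hat{c})$ and pushing it through $T^{-1}(z) = B^{-1}(z - d)$, using that $B$ is invertible so that $\hat{A} = AB^{-1}$ can be un-transformed; alternatively, one simply notes that applying the forward direction to the transformation $T^{-1}$ and the data $(\hat{A},\hat{b},\hat{c})$ recovers $(A,b,c)$ and yields $T^{-1}(\bP(\hat{G}(z),(B^{-1})^Tc)) \subseteq \bP(G(x),c)$, i.e. the opposite containment. Combining the two inclusions gives the claimed equality of sets.

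I do not anticipate a genuine obstacle here: the work is entirely the bookkeeping of checking that the three defining equalities transform correctly, and the only conceptual point — that uniqueness of the path solution lets us conclude the transformed point \emph{is} the image path point rather than merely \emph{a} feasible point — is supplied by Theorem~\ref{thm-pathSmoothness}. The one place to be slightly careful is the sign and placement of the $AB^{-1}d$ term in $\hat{b}$, matching the convention already fixed in the paragraph preceding the theorem statement; getting that right is what makes $\hat{s} = s$ work out cleanly.
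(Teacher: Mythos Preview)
Your proposal is correct and follows essentially the same route as the paper: both arguments verify that the defining system $Ax+s=b$, $A^Ty=c$, $Sy=\mu e$ transforms under $z=T(x)$ (with $s$ and $y$ unchanged) into the corresponding system for $(\hat A,\hat b,\hat c)$, and then invoke uniqueness of the central-path solution at each $\mu$. You are slightly more explicit than the paper about the reverse inclusion and the appeal to Theorem~\ref{thm-pathSmoothness}, but the substance is identical.
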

\begin{proof}
We first note that $x(\mu) \in \bP(G(x), \, c)$ if and only if
there are unique $s(\mu)$ and $y(\mu)$ that satisfy
\begin{eqnarray*}
Ax + s & = & b, \; s \ge 0 \\
A^T y & = & c, \; y \ge 0 \\
Ys & = & \mu e.
\end{eqnarray*}
Set $T(x(\mu)) = z(\mu)$ so that $x(\mu) = B^{-1}(z(\mu) - d)$. Then
$z(\mu)$ satisfies
\begin{eqnarray*}
AB^{-1}z + s & = & b + AB^{-1}d, \; s \ge 0 \\
(B^{-1})^T A^T y & = & (B^{-1})^T c, \; y \ge 0 \\
Ys & = & \mu e,
\end{eqnarray*}
which are the necessary and sufficient conditions for
$z(\mu)$ to be the $\mu$ element of 
$\bP\left(\hat{G}(z), \, (B^{-1})^Tc\right)$.
\end{proof}
\noindent We comment that the proof is stronger then the theorem
statement because we have actually shown that the $\mu$ element of
$\bP(G(x), \, c)$ maps to the $\mu$ element of
$\bP\left(\hat{G}(z), \, (B^{-1})^Tc\right)$. 

Theorem~\ref{thm-affineScaling} illustrates a mathematical and computational
theme of our work, which is that we generate images with standard
geometries and then translate them to create aesthetic images. Our most
common two-dimensional polytope is the regular $k$-gon defined by $G^k(x) = A^k x - e$,
with the $i$-th row of the $k \times 2$ matrix $A$ being
\[
A^k_i = (\cos((i-1)2\pi/k), \; \sin((i-1)2\pi/k).
\]
We use the superscript $k$ on both $G$ and $A$ to indicate this polygon.
Consider the rotation matrix,
\[
R(\theta) = \left[ \begin{array}{rr}
    \cos(\theta) & \sin(\theta) \\[2pt] -\sin(\theta) & \cos(\theta)
    \end{array} \right],
\]
and set $c^i(\theta) = A_i R(\theta)$, with $A_i$ being the $i$-th
row of $A$. Figure~\ref{fig-triangle} displays $\bP(G^3(x), c^i(\theta))$
for $i \in \{1, 2, 3\}$ and $\theta \in \{ 0.009, -0.009, 0.18, -0.18\}$,
and Figure~\ref{fig-hexagon} displays $\bP(G^6(x), c^i(\theta))$
for $i \in \{1, 2, \ldots, 6 \}$ and the same values of $\theta$.
Figure~\ref{fig-star} displays a star created by translating
the $3$-gon paths around the $6$-gon with the affine translations,
\[
R\left( \frac{i\,\pi}{3} \right) \left(
    \left[ \begin{array}{cc} 1/3 & 0 \\ 0 & 1/3 \end{array} \right]
    + \left( \begin{array}{r} -2 \\ 0 \end{array} \right) \right),
\]
with $i \in \{0, 1, 2, \ldots, 5\}$. Theorem~\ref{thm-affineScaling} ensures 
that the paths under translation  are indeed paths within the 
translated polytopes. We comment that the objects in 
Figure~\ref{fig-translation} resemble $k$-gons by design, but they 
are not $k$-gons per se and are instead paths within $k$-gons.

\begin{figure}
\begin{center}
\begin{subfigure}[t]{0.26\linewidth}
\begin{center}
\includegraphics[width=\linewidth]{./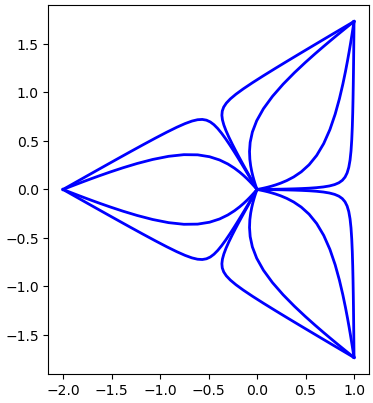}
\caption{Paths in a $3$-gon.} \label{fig-triangle}
\end{center}
\end{subfigure}
\hspace*{10pt}
\begin{subfigure}[t]{0.26\linewidth}
\begin{center}
\includegraphics[width=\linewidth]{./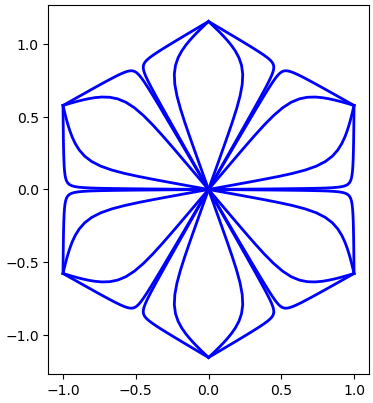}
\caption{Paths in a $6$-gon.} \label{fig-hexagon}
\end{center}
\end{subfigure}
\hspace*{10pt}
\begin{subfigure}[t]{0.332\linewidth}
\begin{center}
\includegraphics[width=\linewidth]{./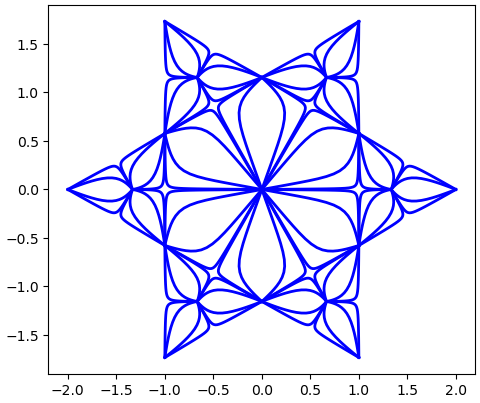}
\caption{A star figure.} \label{fig-star}
\end{center}
\end{subfigure}
\caption{Translations of $3$-gon paths form a star around a $6$-gon.}
\label{fig-translation}
\end{center}
\end{figure}

Another use of Theorem~\ref{thm-affineScaling} is the ability to create
a font from central paths. We create characters from collections of
`strokes,' each of which equates with a translation of
$\bP(G^4(x),c)$, with $c$ being decided to give an appropriate
shape. The authors have taken to the
moniker ``InteriArt," and Figure~\ref{fig-InteriArt} is
an example of this term in characters rendered from $4$-gon
central paths. We also point out that we have replaced the
traditional black square at the end of a proof with an emblem
of paths in a $4$-gon as a bit of artistic levity.

\begin{figure}[ht]
\begin{center}
\includegraphics[width=0.6\linewidth]{./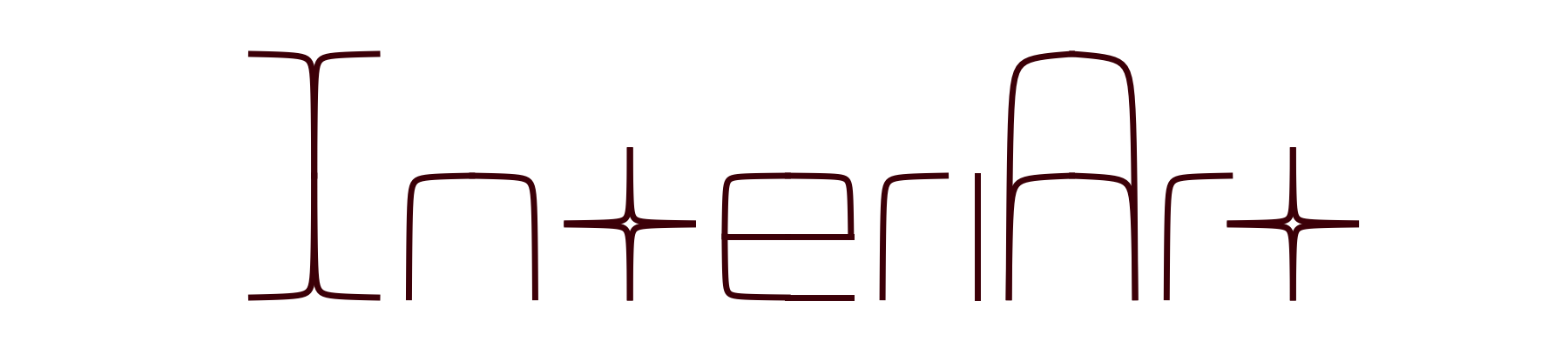}
\caption{A font created from central paths.}
    \label{fig-InteriArt}
\end{center}
\end{figure}

We have already seen in Figure~\ref{fig-neqCPs} 
that different descriptions of the same geometry can lead to different 
central paths, and hence, it would be somewhat natural and analogous to 
assume that different geometries, which would necessarily have different 
algebraic descriptions, would also have different central paths.
However, this is not a universal truth, and our final result 
shows that it is possible for central paths within different
geometries and different algebraic descriptions to equate with
each other. This result permits us to generate two-dimensional surfaces 
in three dimensions by generating a single path in a cube and then continually 
rotating it in three dimensions to form a surface.

Let
\[
\begin{array}{l}
\renewcommand{\arraystretch}{1.1}
G^1(x) = \left(\begin{array}{r}
    x_1 - b_1 \\ x_2 - b_2 \\ x_3 - b_3 \\
    -x_1 - b_1 \\ -x_2 - b_2 \\ -x_3 - b_3 \end{array} \right)
       \; \mbox{ and } \;
G^2(x) = \left(\begin{array}{c}
    \;\;\,x_1 - b_1 \\ -x_1 - b_1 \\  x_2^2 + x_3^2 - b_2^2 
    \end{array} \right),
\end{array}
\]
and consider the geometry in Figure~\ref{fig-cylinder}, which assumes 
$b_1 = b_2 = 1$ for motivational purposes. The three-dimensional 
cube, $\{ x : -1 \le x_i \le 1, i = 1, 2, 3\}$, is $\{x : G^1(x) \le 0\}$, 
and this cube circumscribes the three-dimensional cylinder, 
$\{ x : G^2(x) \le 0\}$. Theorem~\ref{thm-cylinder} shows that central 
paths of the rotated cube equate with central paths in the cylinder.

\begin{figure}[th]
\begin{center}
\includegraphics[width=0.45\linewidth]{./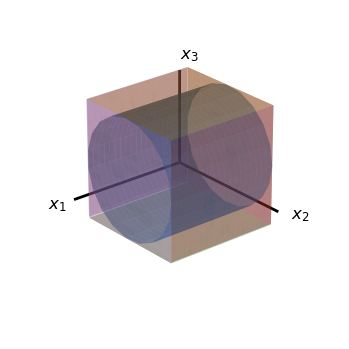}
\vspace*{-30pt}
\caption{A right circular cylinder circumscribed by a cube.}
\label{fig-cylinder}
\end{center}
\end{figure}

\begin{theorem} \label{thm-cylinder}
Assume $c = (c_1, \, c_2, \, c_3)^T$ is such that $c_1 \neq 0$
and that at least one of $c_2$ or $c_3$ is nonzero, and let $R$ be 
the rotation matrix in the $x_2$ and $x_3$ plane,
\[
R = \frac{1}{\sqrt{c_2^2 + c_3^2}}
    \left[ \begin{array}{ccc}
    \sqrt{c_2^2+c_3^2} & 0 & 0 \\
    0 & \;\;\,c_2 & c_3 \\
    0 & -c_3 & c_2
    \end{array} \right].
\]
Let $T(x) = Rx$ so that $T^{-1}(x) = R^{-1}x$. Then
\begin{eqnarray*}
T^{-1} \left( \bP\left( G^1(x), \, T(c) \right) \right) =
       \bP \left( G^2(x), c \right).
\end{eqnarray*}
\end{theorem}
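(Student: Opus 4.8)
The plan is to peel off the rotation using the affine‑invariance already established in Theorem~\ref{thm-affineScaling}, which reduces the assertion to a comparison of two explicitly describable strictly concave barrier problems — the central path of a \emph{rotated cube} and that of the cylinder, for one fixed objective — and then to finish with a separation‑of‑variables argument in coordinates adapted to $c$. For the reduction, observe that $R$ is orthogonal, so $T^{-1}(x)=R^{-1}x=R^{T}x$ is linear with matrix $B=R^{T}$ and $B^{-1}=R$. Write $G^1(x)=A^1x-b^1$, where $A^1$ is the $6\times 3$ matrix whose rows are $\pm e_1^{T},\pm e_2^{T},\pm e_3^{T}$ and $b^1=(b_1,b_2,b_3,b_1,b_2,b_3)^{T}$. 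Applying Theorem~\ref{thm-affineScaling} with this $G^1$, the transformation $T^{-1}$, and objective vector $T(c)=Rc$, and using $(B^{-1})^{T}Rc=R^{T}Rc=c$, gives
\[
T^{-1}\bigl(\bP(G^1(x),\,T(c))\bigr)=\bP\bigl(A^1Rz-b^1,\;c\bigr).
\]
So it remains to prove $\bP(A^1Rz-b^1,c)=\bP(G^2(z),c)$; that is, the central path of the rotated cube $\{z:A^1Rz\le b^1\}$ with objective $c$ coincides with the central path of the cylinder with objective $c$, even though these are different sets described by different numbers of constraints.

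Next I would change to orthogonal coordinates aligned with $c$. The three rows of $R$ are exactly $e_1^{T}$, the (embedded) unit vector $u=(0,c_2,c_3)/\sqrt{c_2^2+c_3^2}$, and the (embedded) unit vector $u^{\perp}=(0,-c_3,c_2)/\sqrt{c_2^2+c_3^2}$, so setting $p=(Rz)_2$ and $q=(Rz)_3$ yields $z_2^2+z_3^2=p^2+q^2$ and $c_2z_2+c_3z_3=\sqrt{c_2^2+c_3^2}\,p$. The six slacks of $A^1Rz\le b^1$ are then $b_1\mp z_1$, $b_2\mp p$, $b_3\mp q$, so the barrier objective of~\eqref{eq-mainProb} for the rotated cube is
\[
c_1z_1+\sqrt{c_2^2+c_3^2}\;p+\mu\bigl[\ln(b_1^2-z_1^2)+\ln(b_2^2-p^2)+\ln(b_3^2-q^2)\bigr],
\]
whereas the three slacks of $G^2(z)\le 0$ are $b_1\mp z_1$ and $b_2^2-p^2-q^2$, giving the cylinder barrier objective
\[
c_1z_1+\sqrt{c_2^2+c_3^2}\;p+\mu\bigl[\ln(b_1^2-z_1^2)+\ln(b_2^2-p^2-q^2)\bigr].
\]

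Both barrier problems are strictly concave, so each has a unique maximizer, which is precisely the corresponding central‑path point at parameter $\mu$. In each, the linear part is independent of $q$ and the logarithmic part is strictly decreasing in $|q|$ — for the cube through $\ln(b_3^2-q^2)$, for the cylinder through $\ln(b_2^2-p^2-q^2)$ with $(z_1,p)$ held at any feasible value — so the maximizer satisfies $q(\mu)=0$ in both cases. Restricting to $q=0$, the two objectives differ only by the additive constant $\mu\ln(b_3^2)$, hence they share the same maximizer, namely the point whose coordinates are the separate maximizers of $c_1z_1+\mu\ln(b_1^2-z_1^2)$ and of $\sqrt{c_2^2+c_3^2}\,p+\mu\ln(b_2^2-p^2)$. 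Thus the rotated‑cube and cylinder central paths agree at every $\mu>0$, and combined with the reduction this proves the theorem.

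I expect the only genuinely delicate point to be recognizing that matching the two defining systems directly is hopeless, so one must exploit the alignment of the objective with $c$ to drive the transverse coordinate $q$ to zero at optimality, after which the cube's two‑sided barrier in the $p$‑direction and the cylinder's single quadratic barrier collapse to the same function up to a constant. A minor caveat worth flagging is that $\nabla G^2$ is rank‑deficient along the cylinder's axis $\{z_2=z_3=0\}$, but the path never meets the axis for $\mu>0$ (there $p(\mu)>0$), and uniqueness of the path point needs only strict concavity of the barrier objective, so none of the earlier machinery is compromised.
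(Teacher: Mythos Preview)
Your proof is correct and takes a genuinely different route from the paper's. The paper proceeds by brute computation: it writes the optimality system~\eqref{eq-necSuffCond} for $\bP(G^1,\,T(c))$, solves the resulting quadratics to obtain a closed-form expression for each coordinate of $x(\mu)$, applies $R^{-1}$ to this explicit formula, then separately writes the optimality system for $\bP(G^2,\,c)$ and verifies by direct substitution that the rotated formula satisfies it. You instead invoke Theorem~\ref{thm-affineScaling} to strip off the rotation up front, pass to orthogonal coordinates $(z_1,p,q)$ aligned with the $(c_2,c_3)$-direction, and observe that both barrier objectives are independent of $q$ in their linear parts and strictly decreasing in $|q|$ in their logarithmic parts, forcing $q(\mu)=0$; on the slice $q=0$ the two barriers differ by the constant $\mu\ln(b_3^2)$ over identical domains, so their unique maximizers coincide.

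What each buys: the paper's approach yields the explicit parametrization~\eqref{eq-rotatedCubeSolution} of the path, which is useful for rendering and clarifies that the result holds irrespective of $b_3$. Your approach is shorter, coordinate-free in spirit, and makes the mechanism transparent --- the equality is really a symmetry statement, not an algebraic coincidence --- and it generalizes immediately (the same argument would match an $n$-cube barrier to a cylinder $\{|x_1|\le b_1,\; x_2^2+\cdots+x_n^2\le b_2^2\}$ for objectives with $c_1\neq 0$ and $(c_2,\ldots,c_n)\neq 0$). Your caveat about the rank deficiency of $\nabla G^2$ along the axis is well placed and correctly dispatched: the strict concavity of the barrier gives uniqueness of $x(\mu)$ directly, and $p(\mu)>0$ keeps the path off the axis.
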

\begin{proof}
The necessary and sufficient conditions in~\eqref{eq-necSuffCond}
for $\bP(G^1, \, T(c) )$, in block matrix form, are
\[
\left[ \begin{array}{cc} I & -I \end{array} \right]
    \left( \begin{array}{c} y_1 \\ y_2 \end{array} \right) = T(c), \;\;\;\;
\left[ \begin{array}{r} I \\ -I \end{array} \right] x +
    \left( \begin{array}{c} s_1 \\ s_2 \end{array} \right) =
    \left( \begin{array}{c} b \\ b \end{array} \right), \;\mbox{ and }\;
Sy = \mu e.
\]
The middle equation asserts that $s_1 = b - x$ and $s_2 = b + x$, and
substituting these into the third equality gives
\[
y_1 = \mu \left( B - X \right)^{-1} e \;\; \mbox{ and } \;\;
    y_2 = \mu \left( B + X \right)^{-1} e,
\]
with both $B+X$ and $B-X$ being invertible because $0 < \mu < \infty$
ensures that $x(\mu)$ is strictly feasible, i.e. $-b < x < b$. The first
equality now gives that 
\[
\mu \left( (B-X)^{-1} - (B+X)^{-1} \right) e = T(c)
\]
defines $\bP(G^1(x), T(c))$. Recognizing that
\begin{eqnarray*}
\lefteqn{(B-X)^{-1} - (B+X)^{-1}} \\ 
   & = & \left((B-X)(B+X)\right)^{-1}(B-X)(B+X) 
            \left((B-X)^{-1} - (B+X)^{-1}\right) \\
   & = & \left(B^2-X^2\right)^{-1}
            \left((B+X) - (B-X)\right) \\
   & = & 2 \left( B^2 - X^2 \right)^{-1} X,
\end{eqnarray*}
we find that the elements of $\bP(G^1(x), T(c))$ satisfy
\[
2\mu \left(B^2 - X^2 \right)^{-1} x = T(c),
\]
which we re-express as
\begin{equation} \label{eq-quadForm}
X^2 T(c) + 2\mu x - B^2 T(c) = 0.
\end{equation}
We now have that the unique solution is
\[
x_i(\mu) = \left\{ \begin{array}{cl}
    \displaystyle
    -\frac{\mu}{T(c)_i} + \frac{1}{|T(c)_i|}\sqrt{ \mu^2 + T(c)_i^2 \, b_i^2 }, & T(c)_i \neq 0 \\[16pt]
    \displaystyle
    0, & T(c)_i = 0,
    \end{array} \right.
\]
with $T(c)_i$ being the $i$-th component of $T(c)$. From the fact that
\[
T(c) = \left( c_1 \, , \;\; \sqrt{c_2^2 + c_3^2} \,, \;\; 0 \right)^T\!\!,
\]
we have that the elements of $T^{-1}\left(\bP(G^1(x), T(c))\right)$ are
\begin{equation} \label{eq-rotatedCubeSolution}
R^{-1}x(\mu) =  \left( \begin{array}{c}
        \displaystyle
        -\frac{\mu}{c_1}
         + \frac{1}{|c_1|} \sqrt{\mu^2 + c_1^2 b_1^2 }\\[24pt]
        \displaystyle
        c_2 \left( \frac{-\mu + \sqrt{\mu^2 + (c_2^2 + c_3^2)b_2}}{c_2^2+c_3^2} \right) \\[24pt]
        \displaystyle
        c_3 \left( \frac{-\mu + \sqrt{\mu^2 + (c_2^2 + c_3^2)b_2}}{c_2^2+c_3^2} \right)
        \end{array} \right).
\end{equation}

The necessary and sufficient conditions for $\bP \left( G^2(x), c \right)$ are
\begin{eqnarray*}
\lefteqn{
\left[ \begin{array}{rrr} 1 & -1 & 0 \\
    0 & 0 & 2x_2 \\ 0 & 0 & 2x_3 \end{array} \right]
    \left( \begin{array}{c} y_1 \\ y_2 \\ y_3 \end{array} \right)
    = \left( \begin{array}{c} c_1 \\ c_2 \\ c_3 \end{array} \right),} \\[10pt]
& & \hspace*{40pt}
    \left( \begin{array}{c} \;\;\,x_1 \\[2pt] -x_1 \\[2pt]
        x_1^2 + x_2^2 \end{array} \right) + 
        \left( \begin{array}{c} s_1 \\[2pt] s_2 \\[2pt] s_3 \end{array} \right) =
        \left( \begin{array}{c} b_1 \\[2pt] b_1 \\[2pt] b_2^2 \end{array} \right), 
        \; \mbox{ and } \;
Ys = \mu e.
\end{eqnarray*}
Solving the second equality for $s$ and substituting this expression into the third equality gives
\[
y = \mu
\left[ \begin{array}{ccc}
    b_1-x_1 & 0 & 0 \\ 0 & b_1+x_1 & 0 \\ 0 & 0 & b_2^2-x_2^2-x_3^2
    \end{array} \right]^{-1} 
    \left( \begin{array}{c} 1 \\ 1 \\ 1 \end{array} \right),
\]
with the inverse again being guaranteed because $0 < \mu < \infty$
ensures strict feasibility. Substituting this expression for $y$
into the first condition gives
\[
\mu \left[ \begin{array}{rrr} 1 & -1 & 0 \\[4pt]
    0 & 0 & 2x_2 \\[4pt] 0 & 0 & 2x_3 \end{array} \right] 
    \left( \begin{array}{c}
        1\, / \,(b_1-x_1) \\[4pt] 1\, / \, (b_1+x_1) 
            \\[4pt] 1\, / \,(b_2^2-x_2^2-x_3^2)
        \end{array} \right)
    = \left( \begin{array}{c} c_1 \\[4pt] c_2 \\[4pt] c_3 \end{array} \right).
\]
We rewrite these equations as
\begin{eqnarray*}
c_1 x_1^2 + 2\mu x_1 - b_1^2 c_1 & = & 0, \\
c_2(b_2 -x_2^2-x_3^2) & = & 2x_2 \mu, \mbox{ and} \\
c_3(b_3 -x_2^2-x_3^2) & = & 2x_3 \mu.
\end{eqnarray*}
The first equation identifies $x_1$ exactly like~\eqref{eq-quadForm}, 
and hence, $x_1(\mu)$ is as stated in~\eqref{eq-rotatedCubeSolution}. 
The last two equations are coupled quadratics, and direct substitution
of the second and third components of~\eqref{eq-rotatedCubeSolution}
satisfy these equations.
\end{proof}

\section{Artistic Outcomes} \label{sec-art}

We now showcase several of our artistic projects, but we first note that 
the labor of creating quality art has been equal to, if not in excess of, 
our mathematical and computational efforts. Every art project has brought 
challenges and failures, and we have learned from each 
experience. Sometimes the hassle has been confronting technological limitations, sometimes 
it has been correcting standard image processing schemes, and sometimes it has been
the slow, deliberate, and pedantic industry required to do most anything well. The projects 
that follow illustrate items that are the result of numerous previous attempts that were made
before `getting things right.' We catalog the effort associated with our largest piece 
to date in the attached supplement. Creating quality art is work, but it is also fulfilling 
and, in our case, it promotes how the mathematics of optimization is not only 
useful, but beautiful.

\subsection{Two-Dimensional Art and Tilings} \label{sec-2Dart}

Central paths in two-dimensional $k$-gons combine to form
appealing patterns useful in a variety of art projects, and we use
these patterns to create a number of items. Most especially,
two-dimensional paths lend themselves to laser cutting and etching
and to 3D-printing. Our first 3D-print had random paths in a $4$-gon, with
each path being $\bP(G^4(x), c)$ for a random $c$, see Figure~\ref{fig-orig3Dprint}. 
Distributional qualities adjust appearance, and in this case we used two 
different distributions. If we visually divide the image into four ``leaves" for
descriptive purposes, then each leaf corresponds with one of the four vertices.
Two paths define and bound each leaf, and we use a normal distribution to decide
these paths. Let $\alpha \sim N(\eta, \sigma^2)$, and notice that each leaf is bracketed 
by paths defined by two rows of $A^k$. This is because $\bP(G^4(x), A^4_i)$ 
is a line from the origin to the midpoint of the facet defined by the supporting 
hyperplane $\{ x : A^4_i x = 1\}$. For instance, paths moving to 
the upper left have $c$ vectors that are convex combinations of 
$A^4_2 = (0,1)$ and $A^4_3 = (-1,0)$. The paths in this object have $\eta = 0.0425$ 
and $\sigma = \eta/3$, and a pair of random samples, say $\alpha_1$ and $\alpha_2$, define 
the two outer paths of each leaf. So the paths that define the leaf to the upper left
have the form
\[
c_1^T =  (1 - \alpha) A^4_2 + \alpha A^4_3 
    \;\; \mbox{ and } \;\; c^T_2 = (1 - \alpha) A^4_3 + \alpha A^4_2.
\]
There is about a $0.15\%$ chance that one of these paths is part of
a neighboring leaf, which seems to the authors akin to generating a four leaf 
clover. The chance of such an event is obviously dependent on the distribution and 
its parameters, and toying with alternatives leads to different qualities.

Paths within a leaf are generated with a uniform distribution,
and in this example we use $\beta \sim U(2\eta, 1-2\eta)$, with $\eta = 0.0425$ 
as before. Each leaf has three or four paths internal to the 
leaf, and each of these is defined by a $c$ vector of the form
\[
c^T = (1 - \beta) A^4_i + \beta A^4_j.
\]
This stochastic process of path generation allows us to generate numerous patterns
in a $k$-gon, and we use this technique in some of our projects.

Many of our patterns tile paths in adjoining polygons, but the resulting tilings
are not polygonal per se. They are instead tilings of paths in polygons. This leaves
patterns that only touch, or intersect, at terminal points of a path.
Figure~\ref{fig-trivet} is a version of the Pythagorean tiling and illustrates
this phenomena. The Pythagorean tiling cascades two $4$-gons, but the piece in
Figure~\ref{fig-trivet} is created by paths in the $4$-gons, making the borders
of the $4$-gons intuitive even though they are not there.

\begin{figure}[t]
\begin{center}
\begin{subfigure}[t]{0.45\linewidth}
\begin{center}
\includegraphics[width=0.6\linewidth]{./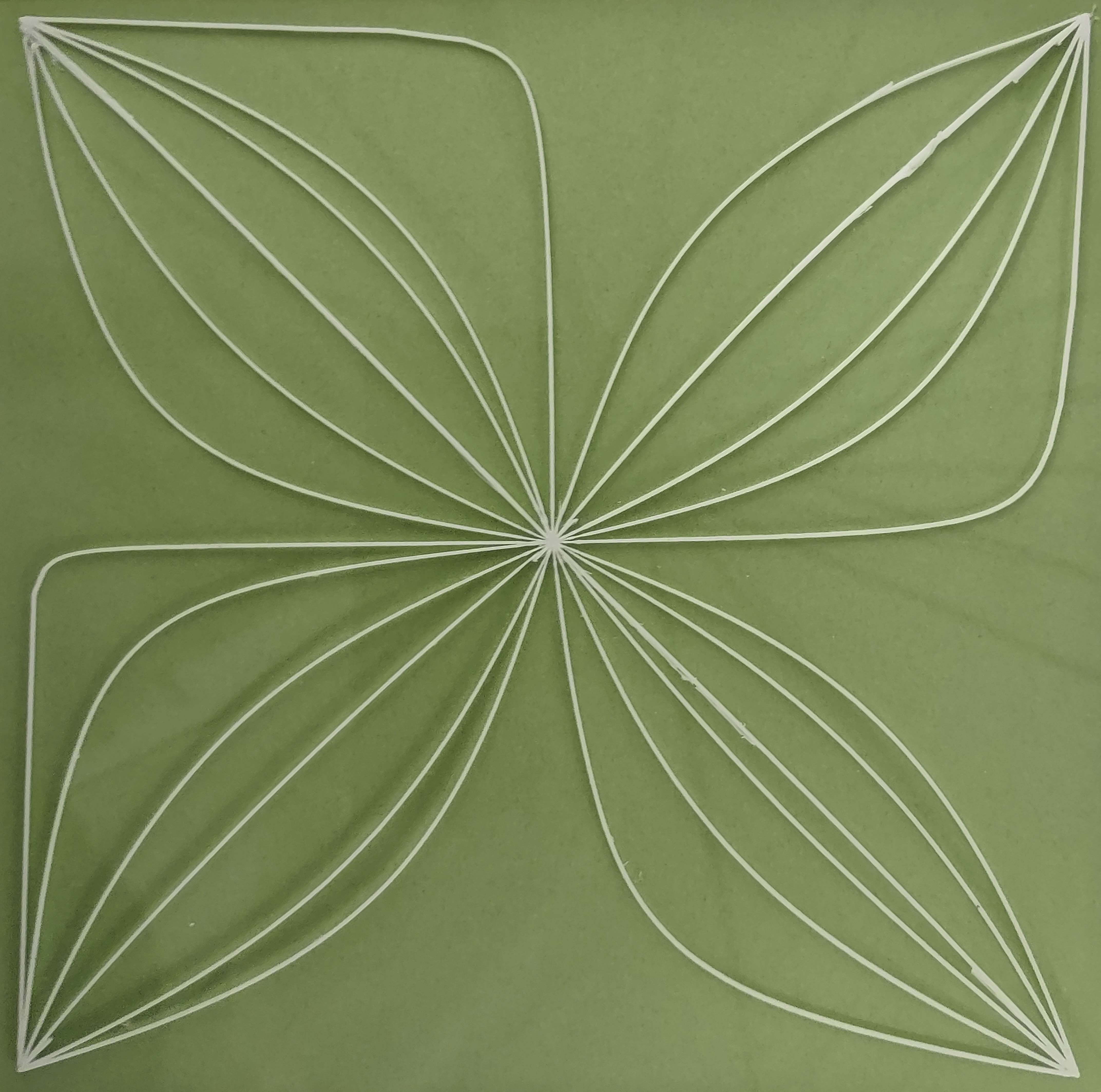}
\caption{\footnotesize A 3D-print of paths in a $4$-gon.} \label{fig-orig3Dprint}
\end{center}
\end{subfigure}
\hspace*{0.05\linewidth}
\begin{subfigure}[t]{0.45\linewidth}
\begin{center}
\includegraphics[width=0.6\linewidth]{./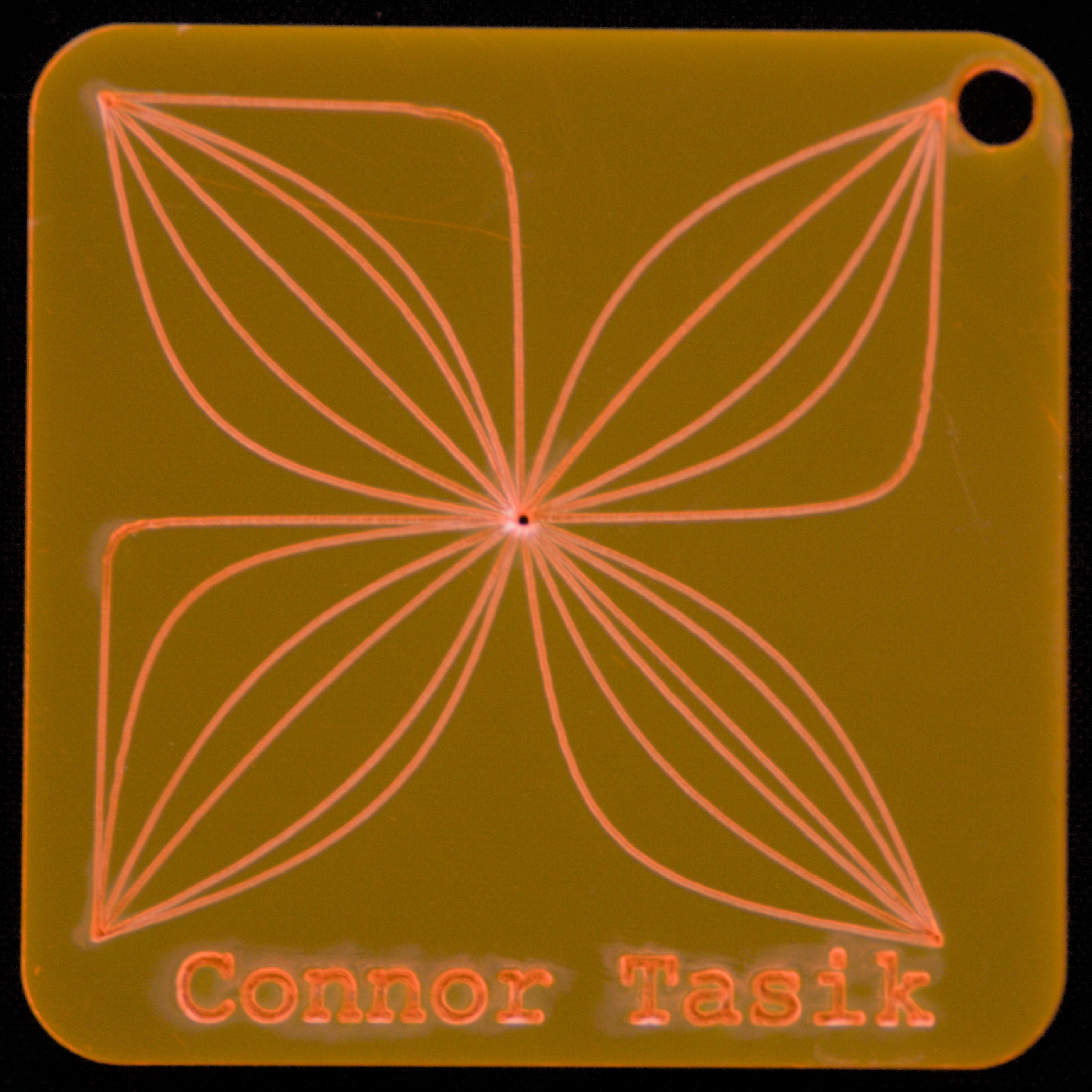}
\caption{\footnotesize A backpack tag.} \label{fig-backpackTag}
\end{center}
\end{subfigure}
\end{center}
\caption{Our original 3D-print of two-dimensional paths and a
    backpack tag from the same image.} \label{fig-first2Dart}
\end{figure}

\subsection*{Etchings and Laser Cutouts}

The technique of generating random paths in a $4$-gon extends directly to $k$-gons, and we have
used this concept to promote math, art, and engineering to middle school students during
Rose-Hulman's 2024 Sonia Math Day. We added an electrical engineering aspect to the project 
by designing a circuit that allowed participants to adjust parameters like $k$ and the number 
of paths in a leaf, and they generated patterns until one struck their fancy. We 
etched these patterns onto colored acrylic medallions with a laser cutter to create 
backpack tags, resulting in mementos of a chic application of math and 
technology. Figure~\ref{fig-backpackTag} is an example backpack tag of the paths 
in Figure~\ref{fig-orig3Dprint}. We have also used patterns of central paths to etch 
leather to create coasters and key-chain fobs, and we have laser-cut wood trivets, see 
Figure~\ref{fig-laserCutter}. We continue to explore other possibilities, and in particular, 
we hope to tile glass etchings to create a work of stained glass. An acrylic
prototype is in Figure~\ref{fig-stainedGlass}. 

\begin{figure}[ht]
\begin{center}
\begin{subfigure}[t]{0.315\linewidth}
\begin{center}
\includegraphics[width=0.61\linewidth]{./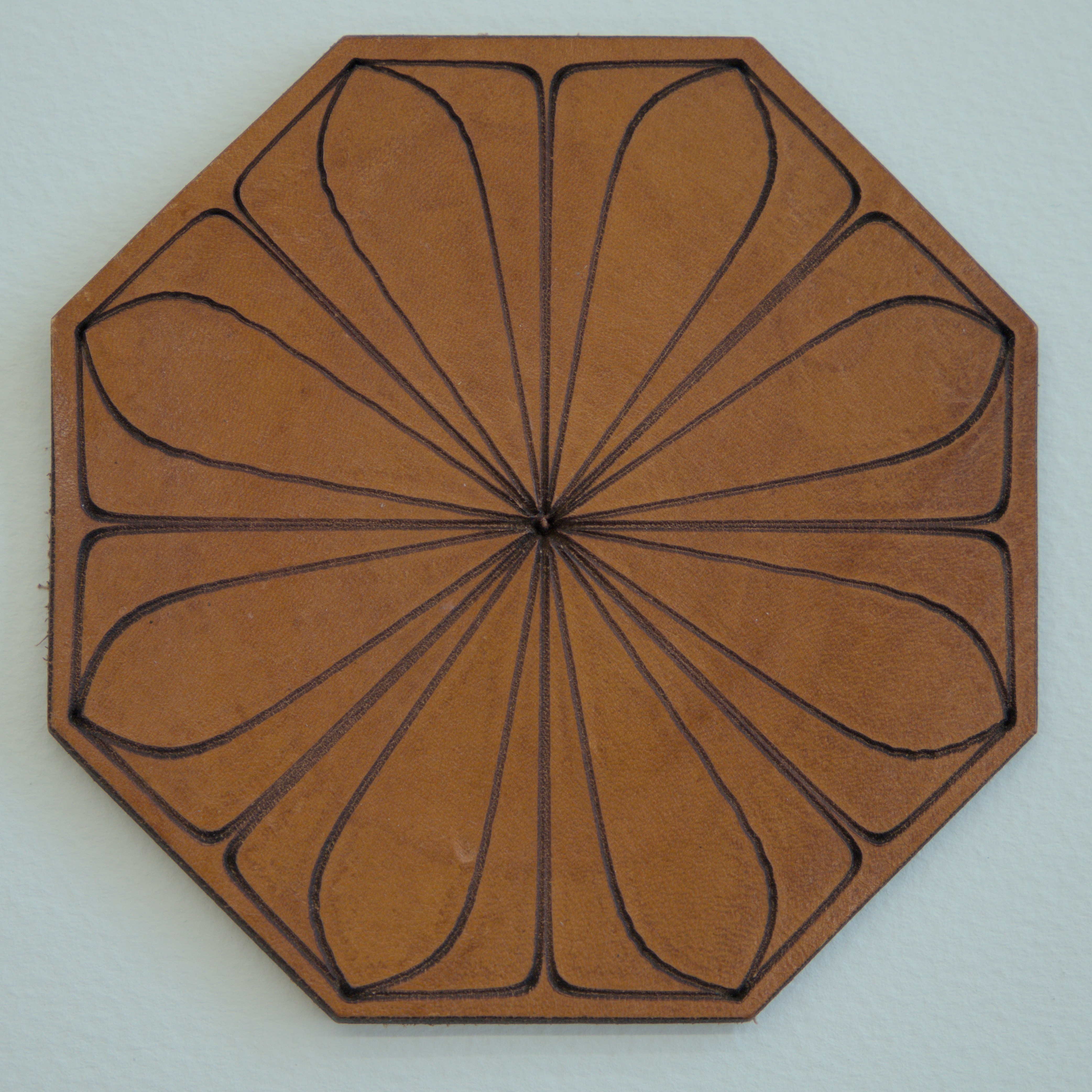}
\caption{\footnotesize An etched leather coaster.} \label{fig-coaster}
\end{center}
\end{subfigure}
\hspace*{0.01\linewidth}
\begin{subfigure}[t]{0.315\linewidth}
\begin{center}
\includegraphics[width=0.478\linewidth]{./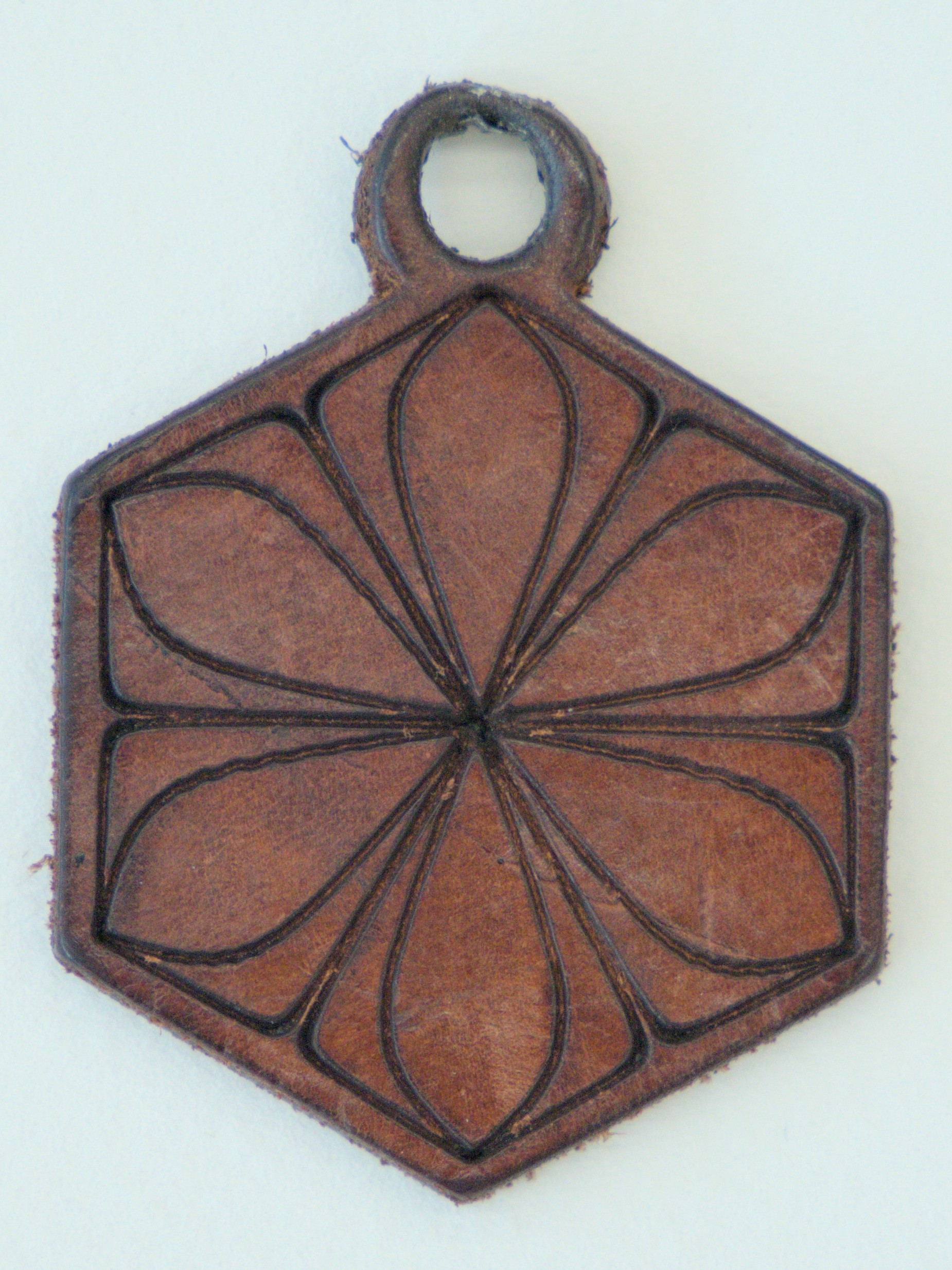}
\caption{\footnotesize An etched key-chain fob.} \label{fig-keychain}
\end{center}
\end{subfigure}
\hspace*{0.01\linewidth}
\begin{subfigure}[t]{0.315\linewidth}
\begin{center}
\includegraphics[width=0.95\linewidth]{./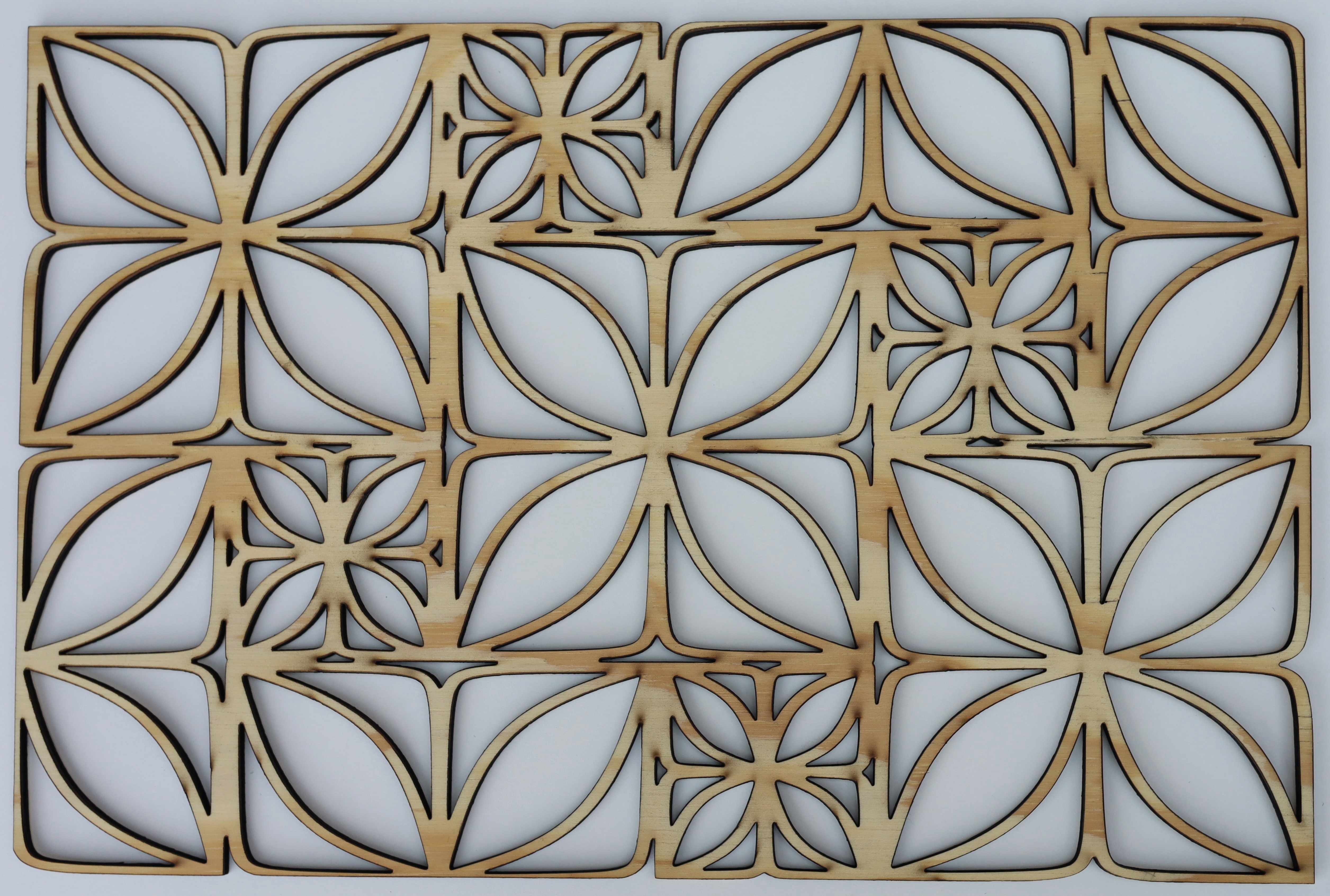}
\caption{\footnotesize A laser-cut wood trivet.} \label{fig-trivet}
\end{center}
\end{subfigure}
\end{center}
\vspace*{-10pt}
\caption{Art items created with a laser cutter/etcher.} 
    \label{fig-laserCutter}
\end{figure}

\begin{figure}[ht]
\begin{center}
\includegraphics[width=0.37\linewidth]{./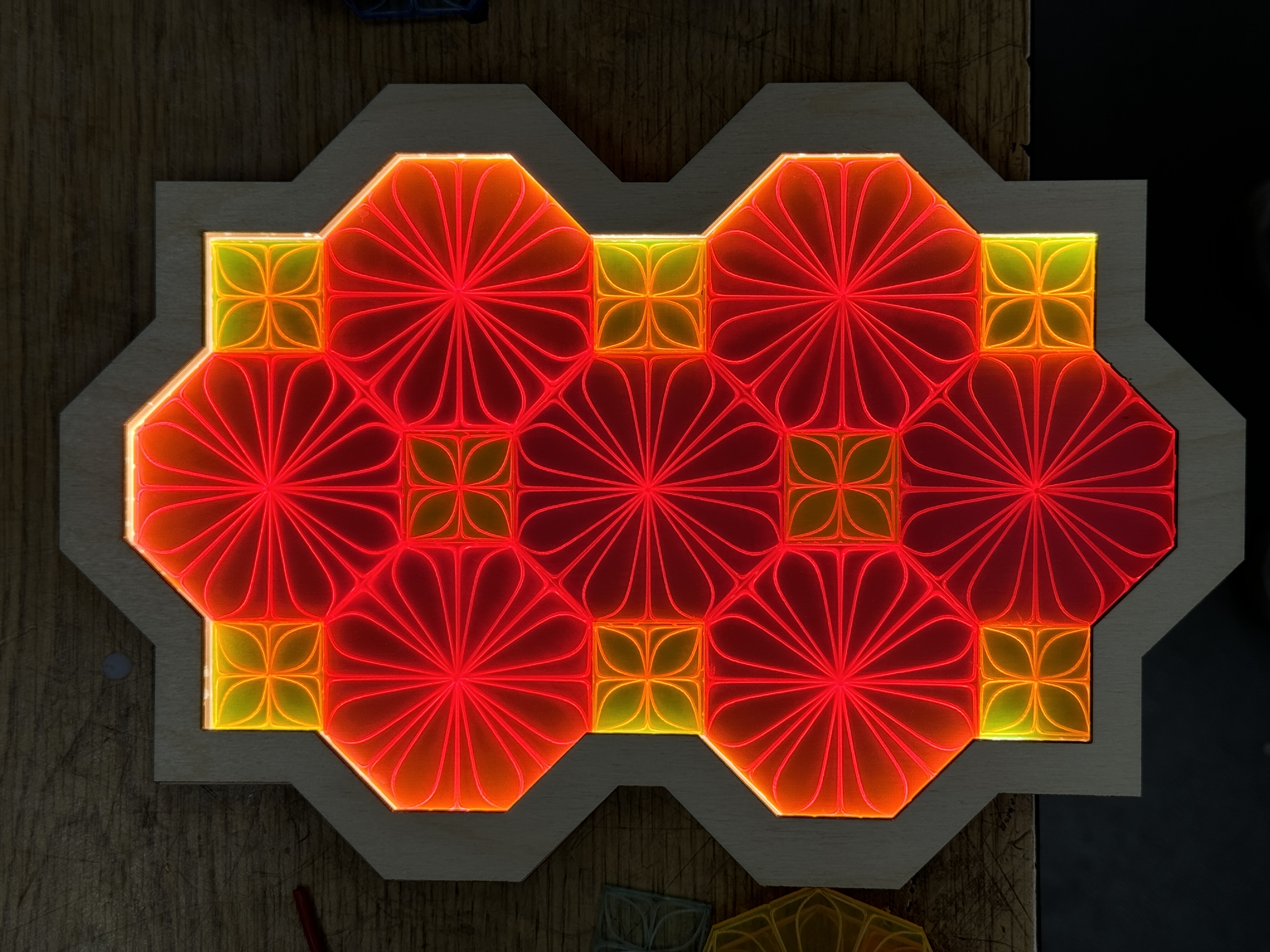}
\end{center}
\vspace*{-10pt}
\caption{An acrylic prototype of a stained glass piece.} \label{fig-stainedGlass}
\end{figure}

\subsection*{3D-Printing}

3D-printers use several modalities to construct objects, and for the moment, 
we only consider the most common printing method of Fused Deposition Modeling (FDM).
FDM printers are not capable of creating three-dimensional structures with paths in, 
for instance, a Platonic solid because they would require an impractical number of 
structural supports. However, modern FDM printers do provide a cost-effective way to 
create three-dimensional objects associated with two-dimensional patterns, which for 
us are tilings of central paths from $k$-gons. The printed items combine pattern, color,
and shape, and we have used them as holiday decorations and decorative trinkets. We have 
also printed and assembled random patterns to create a bouquet of daisy- and thistle-like 
objects, a project that we discuss below. Illustrative objects are in 
Figure~\ref{fig-fdmPrints}.

\begin{figure}[ht]
\begin{center}
\begin{subfigure}[t]{0.315\linewidth}
\begin{center}
\includegraphics[width=0.71\linewidth]{./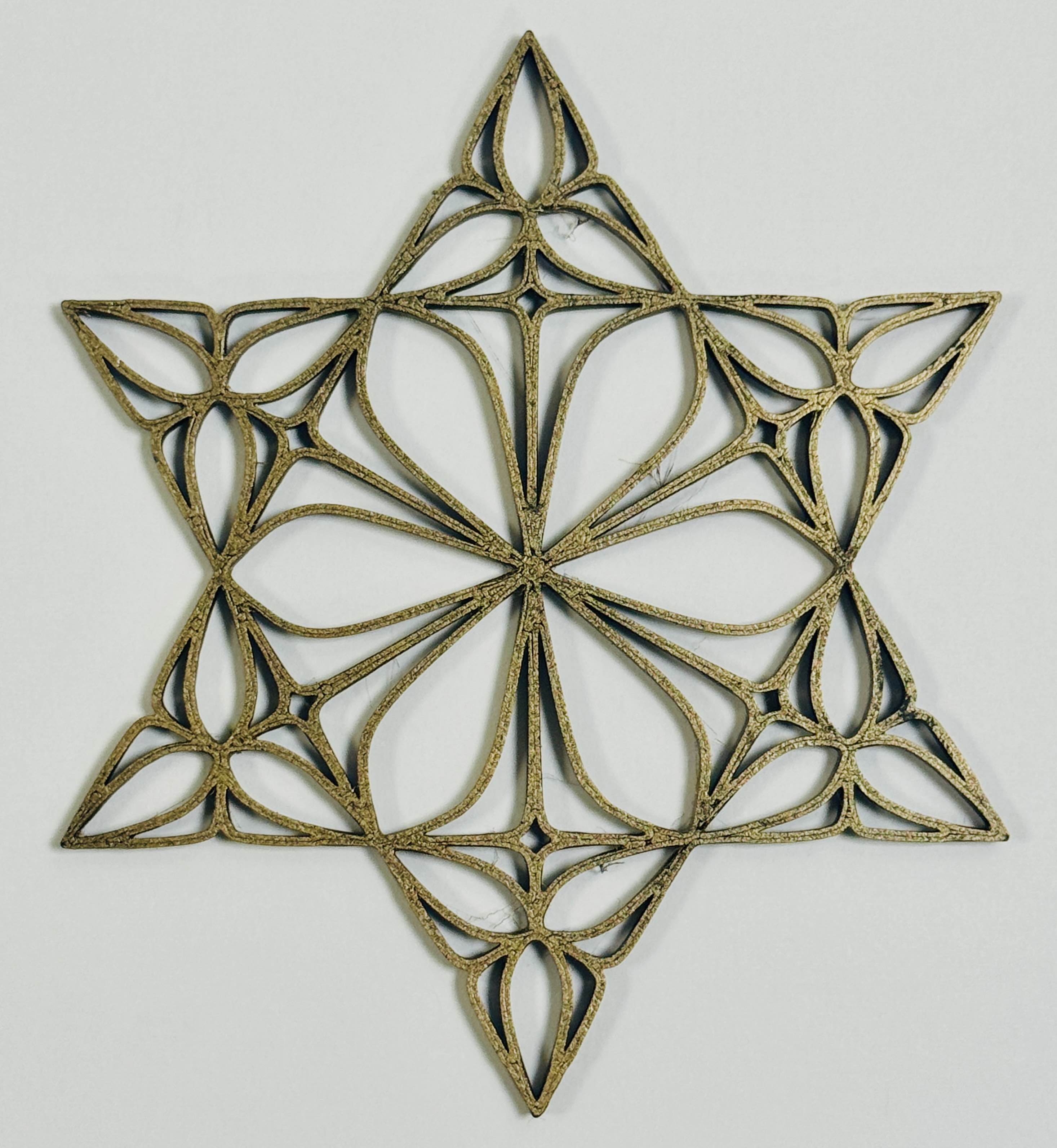}
\caption{\footnotesize A holiday decoration.} \label{fig-holiday}
\end{center}
\end{subfigure}
\hspace*{0.01\linewidth}
\begin{subfigure}[t]{0.315\linewidth}
\begin{center}
\includegraphics[width=0.7\linewidth]{./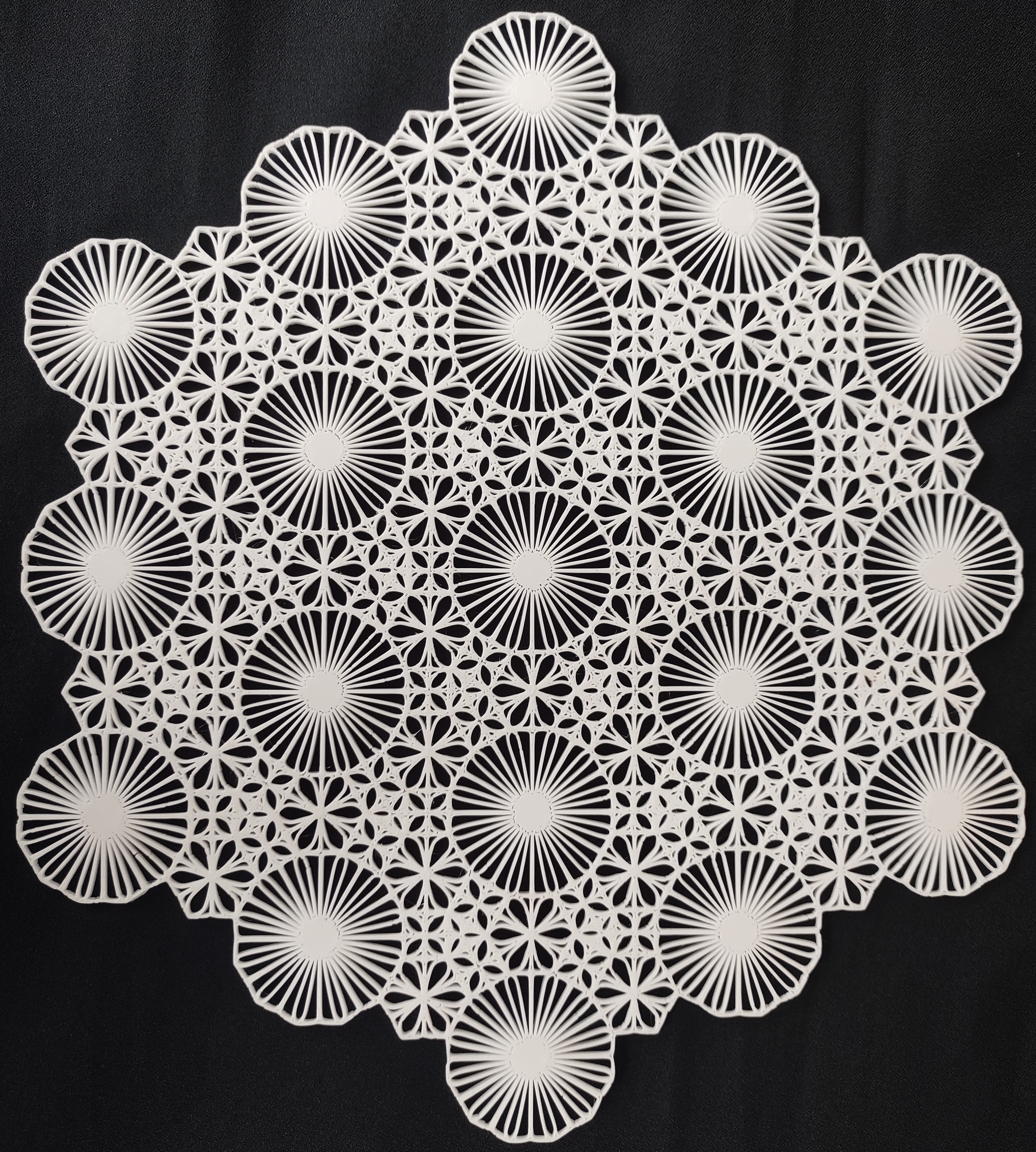}
\caption{\footnotesize An intricate tiling.} \label{fig-intricateTiling}
\end{center}
\end{subfigure}
\hspace*{0.01\linewidth}
\begin{subfigure}[t]{0.315\linewidth}
\begin{center}
\includegraphics[width=0.61\linewidth]{./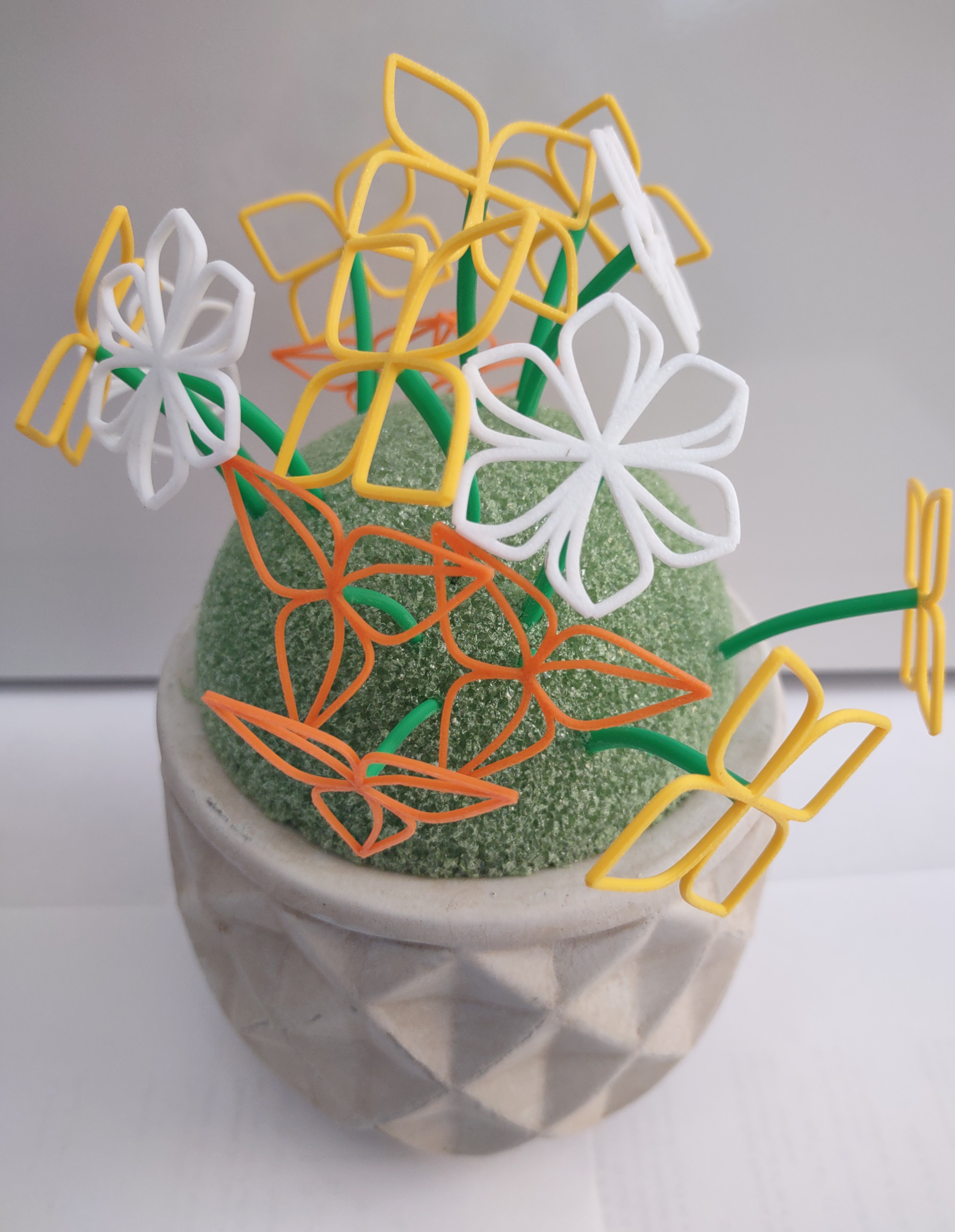}
\caption{\footnotesize A bouquet.} \label{fig-bouquet}
\end{center}
\end{subfigure}
\end{center}
\caption{FDM printed items.} 
    \label{fig-fdmPrints}
\end{figure}

\subsection*{Print Making with Cyanotype and SolarFast}

Print making with patterns and/or tilings of two-dimensional central paths
has become one of our favorite techniques. We use cyanotype and SolarFast, 
both of which are dyes that react with ultraviolet light to create color, 
to imprint patterns on paper and cloth. Cyanotyping is the original process
used to make blueprints, and it only gives shades of blue. SolarFast is 
similar but comes in a variety of colors.

The technique begins with a design like the one in Figure~\ref{fig-intricateTiling} 
--- we use a modern and accurate FDM printer. We note that the image processing 
software requires special settings to ensure fidelity to our mathematical calculations, 
for otherwise, the result is flawed with artifacts from aliasing. We then pre-stretch 
watercolor paper to reduce warping after the printing process. Colorless dye 
is then applied to the paper, and the 3D-printed pattern is placed over the dyed 
area. We subsequently expose the paper to sunlight for several minutes to create
color in the unblocked regions. The print is then washed with a special soap 
and is left to dry. The drying process ends with improved flatness if the paper 
is held in place as it dries. The results are, at least to the authors, visually stunning, 
see the previously published pieces in Figure~\ref{fig-SolarFast} as 
examples\footnote{Both images published in~\cite{interiart} and reused here with 
the authors' permission.}. A piece similar to Figure~\ref{fig-SolarFast1}, but with 
a solid, dark-blue background, is also now part of Rose-Hulman's permanent art collection.

\begin{figure}[ht]
\begin{center}
\begin{subfigure}[t]{0.45\linewidth}
\begin{center}
\includegraphics[width=0.6\linewidth]{./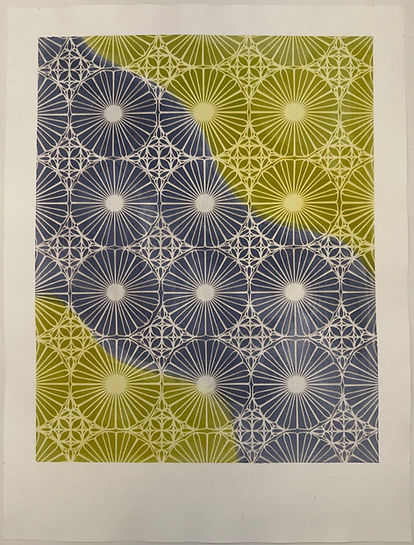}
\caption{\footnotesize Uniform Paths in a 3-4-3-12 Tiling with
    Blue Green Squiggle Background~\cite{interiart}.} 
    \label{fig-SolarFast1}
\end{center}
\end{subfigure}
\hspace*{0.01\linewidth}
\begin{subfigure}[t]{0.47\linewidth}
\begin{center}
\includegraphics[width=0.75\linewidth]{./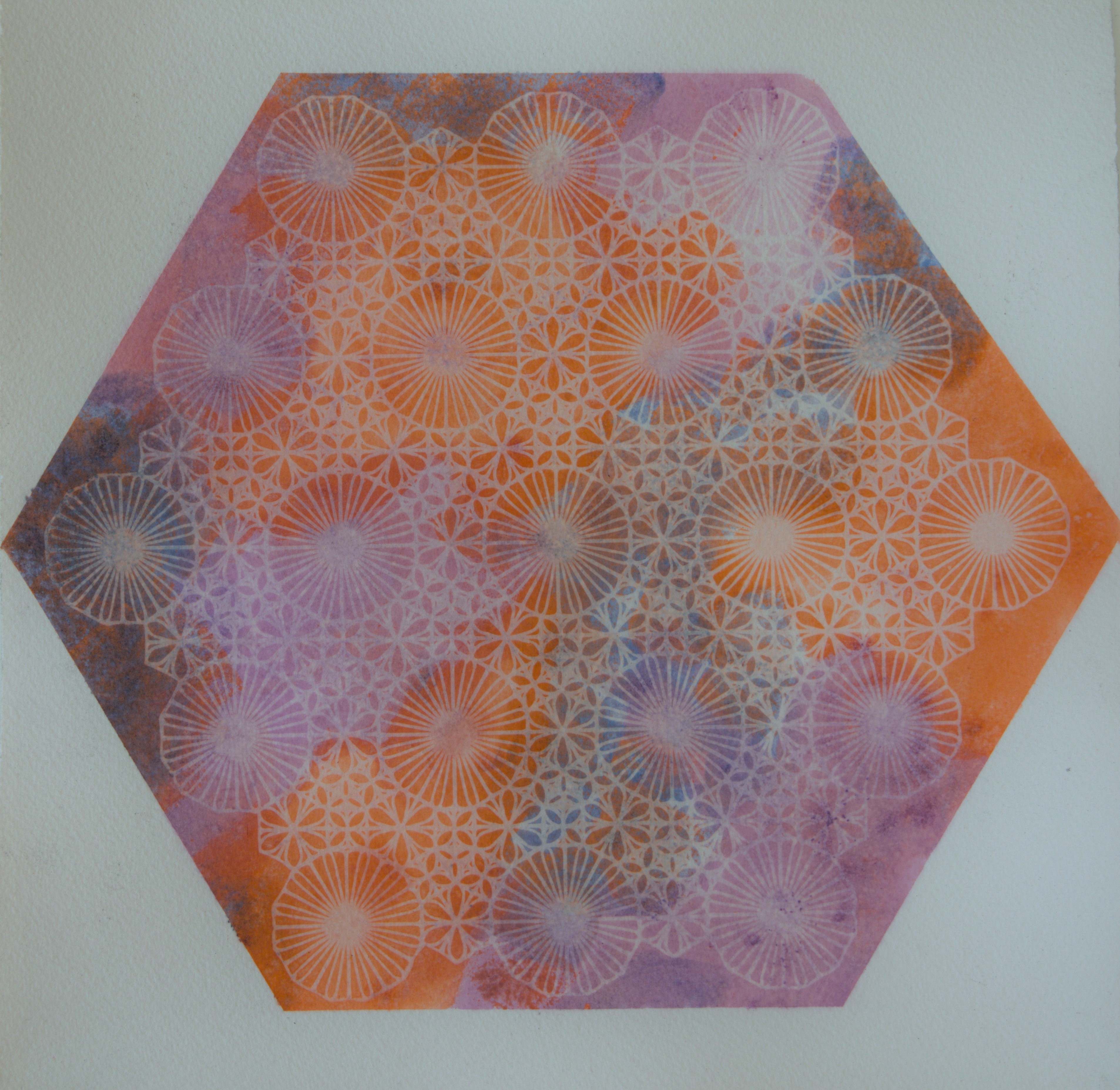}
\caption{\footnotesize Uniform Paths in a Truncated
    Trihexagonal Tiling with Nebular Background~\cite{interiart}.} 
    \label{fig-SolarFast2}
\end{center}
\end{subfigure}
\end{center}
\caption{Prints made with SolarFast.} 
    \label{fig-SolarFast}
\end{figure}

We used the SolarFast print making process to promote mathematics to middle school 
students during Rose-Hulman's 2025 Sonia Math Day. Students selected from
patterns after learning about central paths, and they then applied SolarFast
Dye to a $4$ inch by $6$ inch piece of watercolor paper. We then exposed and washed 
their prints, and everyone left with a unique piece of art to remember the day.
The collection of pieces is in Figure~\ref{fig-SoniaMath}. We have also used 
SolarFast dye to imprint t-shirts and other cloth items, and Figure~\ref{fig-tshirt}
shows the front and back of a shirt.

\begin{figure}[ht]
\begin{center}
\begin{subfigure}[t]{0.45\linewidth}
\begin{center}
\includegraphics[width=0.8\linewidth]{./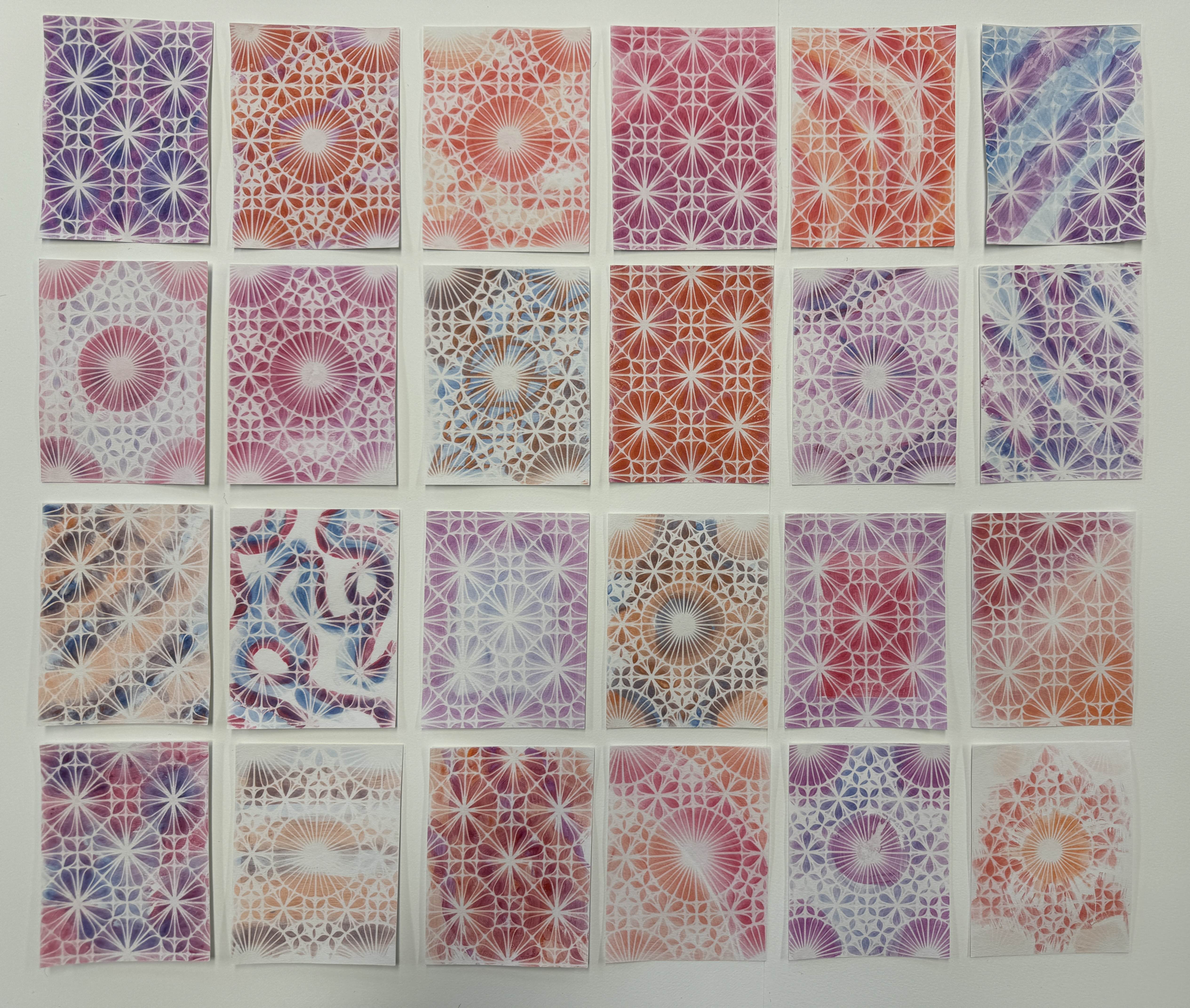}
\caption{\footnotesize Art projects from Rose-Hulman's
    2025 Sonia Math Day.} \label{fig-SoniaMath}
\end{center}
\end{subfigure}
\hspace*{0.01\linewidth}
\begin{subfigure}[t]{0.5\linewidth}
\begin{center}
\includegraphics[width=0.45\linewidth]{./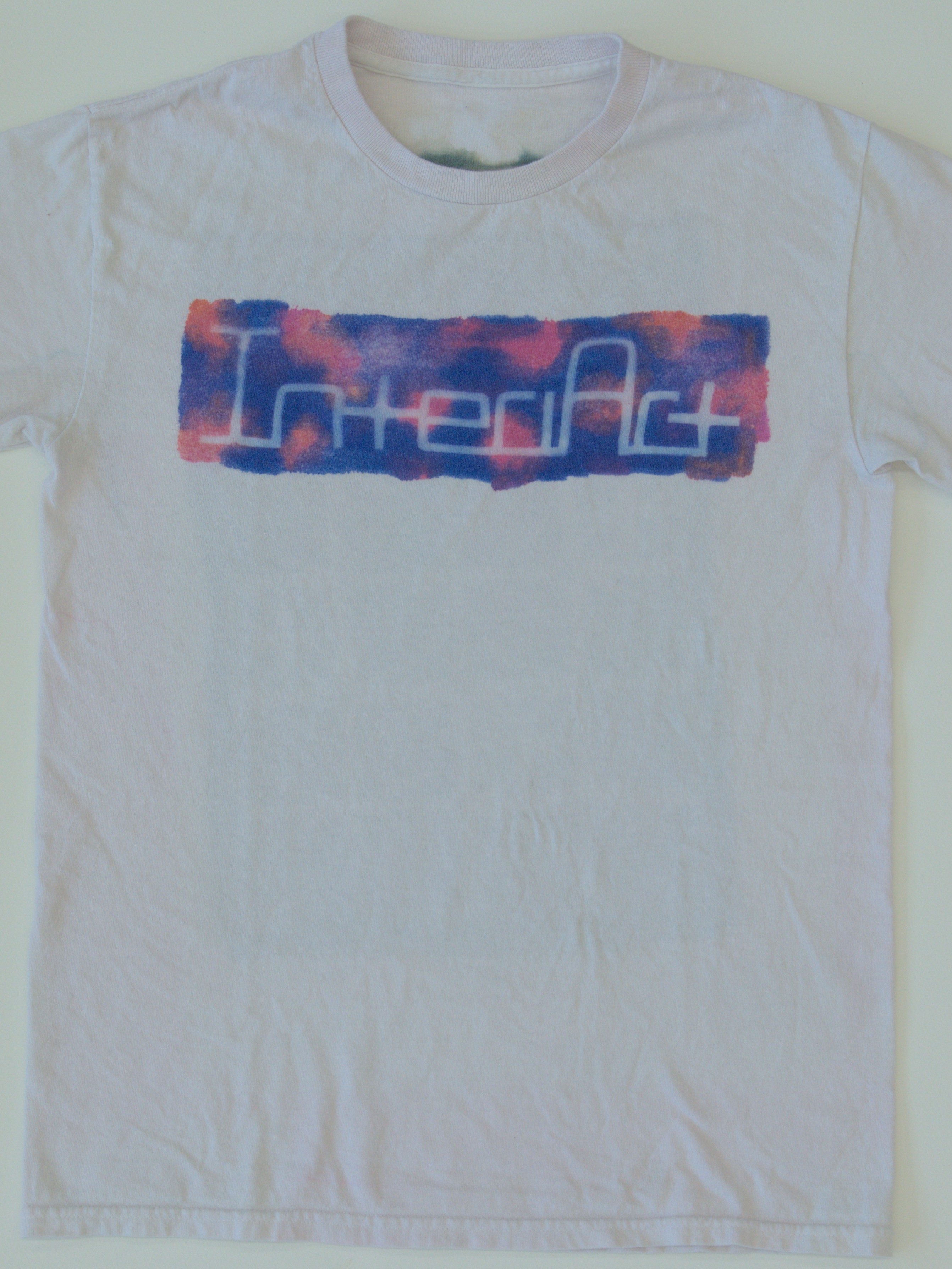}
\includegraphics[width=0.45\linewidth]{./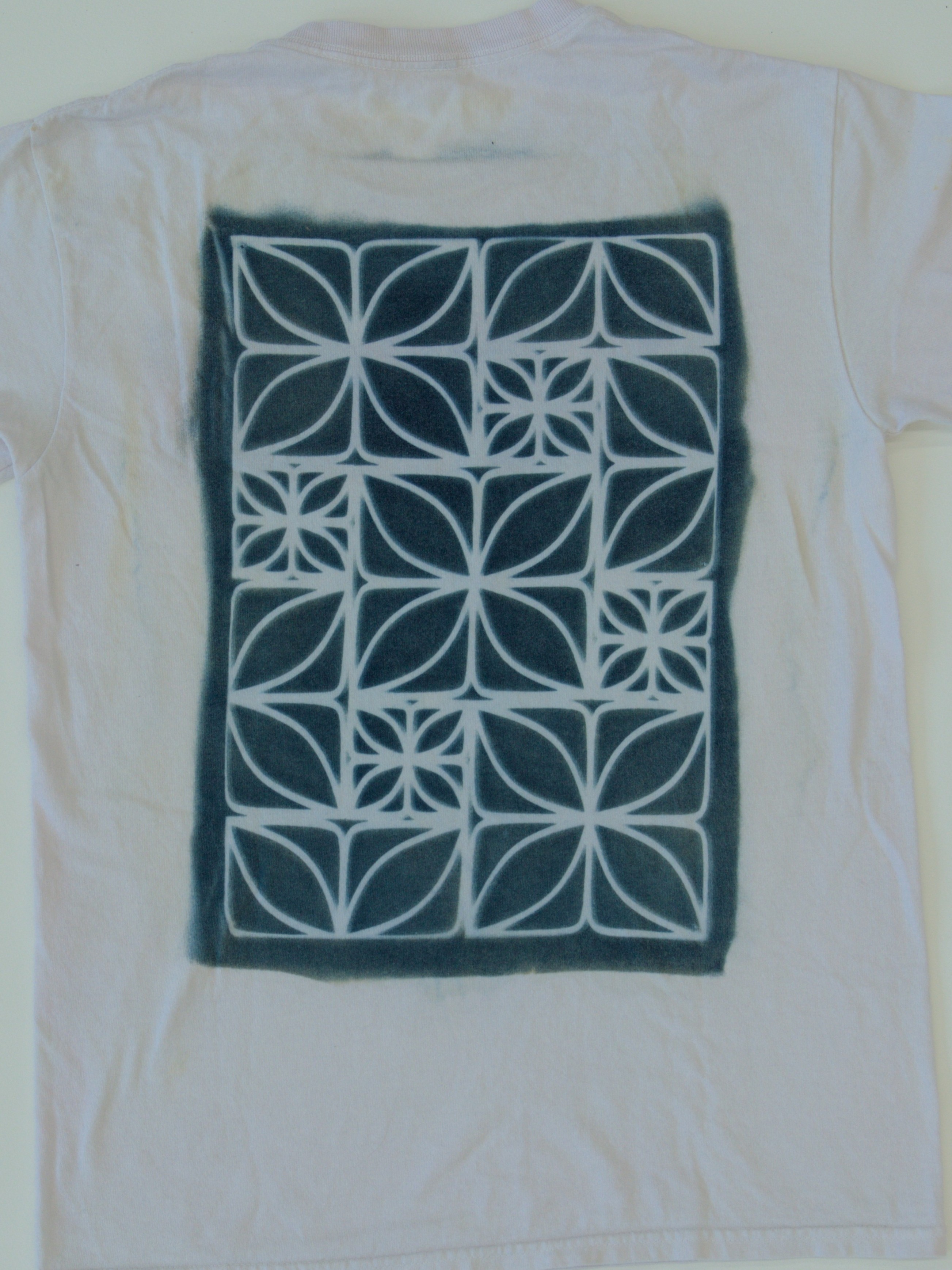}
\caption{\footnotesize T-shirt printing.} \label{fig-tshirt}
\end{center}
\end{subfigure}

\end{center}
\caption{SolarFast projects and a cloth example.} 
    \label{fig-SolarFastProjects}
\end{figure}

Combining prints and wood cuttings elevates the visual effect. These pieces
have a SolarFast print behind a laser-cut wood pattern, rendering a collection
of ``windows" through which the colored patterns are visible. Figure~\ref{fig-combined} 
contains two examples. The wood portion of the clock 
in Figure~\ref{fig-clock} has paths in a $12$-gon, with each leaf having four
paths. The red-orange SolarFast background of the clock is a tiling of paths in 
hexagons and triangles. The large wall piece in Figure~\ref{fig-bigWall} is over 
$10$ feet long and $4$ feet tall. The wood is maple veneered, $1/4$-inch plywood finished 
with linseed oil, and the piece has  $26$ triangular patterns, $12$ square patterns, 
and one $12$-gon pattern. The colorful SolarFast backdrop is created from tilings 
with paths in $3$-, $4$-, $6-$, $8-$, and $12$-gons. This piece of art is part of 
Rose-Hulman's permanent collection, and a discussion of the artistic techniques and
efforts is in the attached supplement.

\begin{figure}[ht]
\begin{center}
\begin{subfigure}[t]{0.45\linewidth}
\begin{center}
\includegraphics[width=0.6\linewidth]{./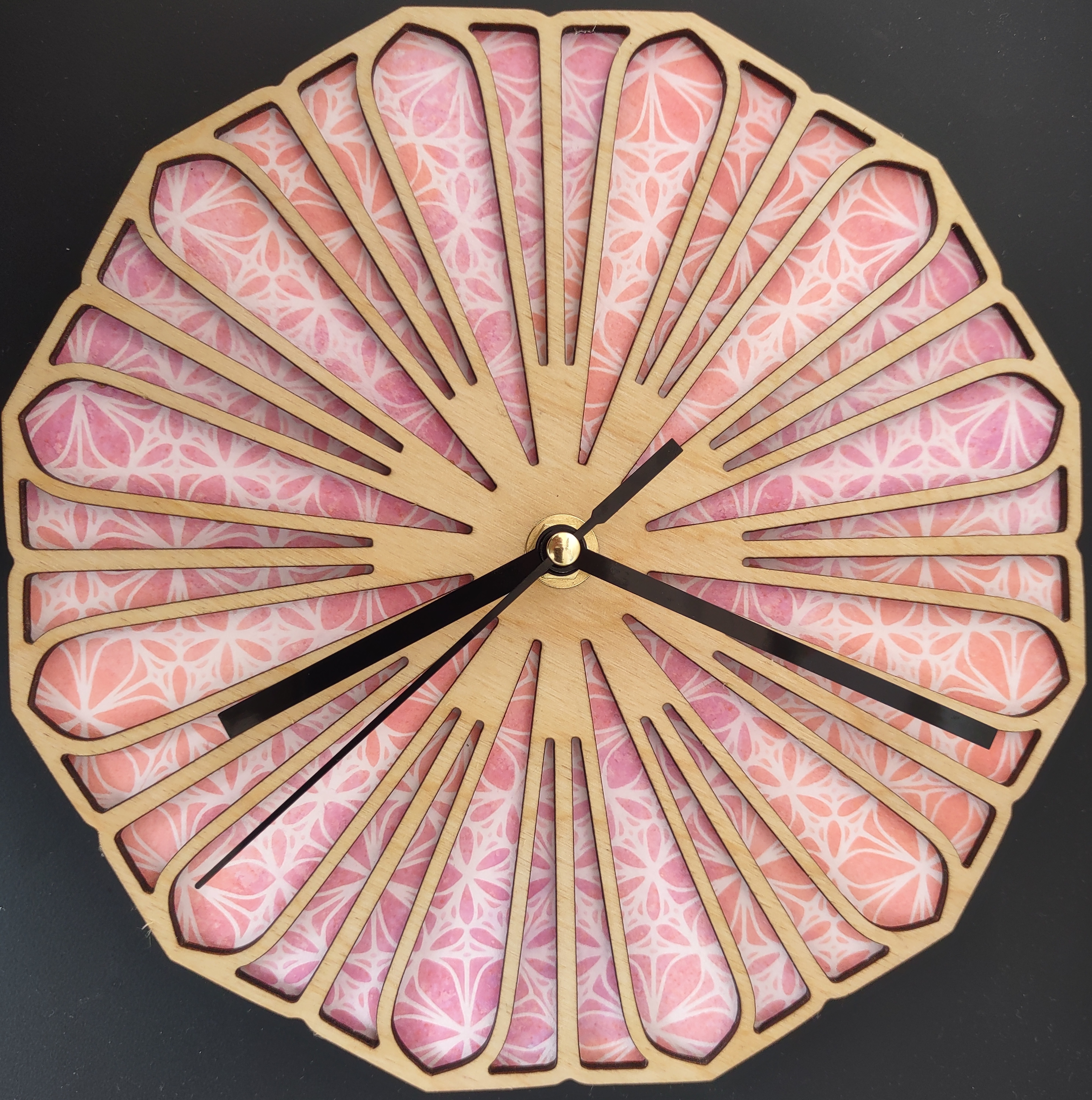}
\caption{\footnotesize A wall clock.} \label{fig-clock}
\end{center}
\end{subfigure}
\hspace*{0.01\linewidth}
\begin{subfigure}[t]{0.45\linewidth}
\begin{center}
\includegraphics[width=0.68\linewidth]{./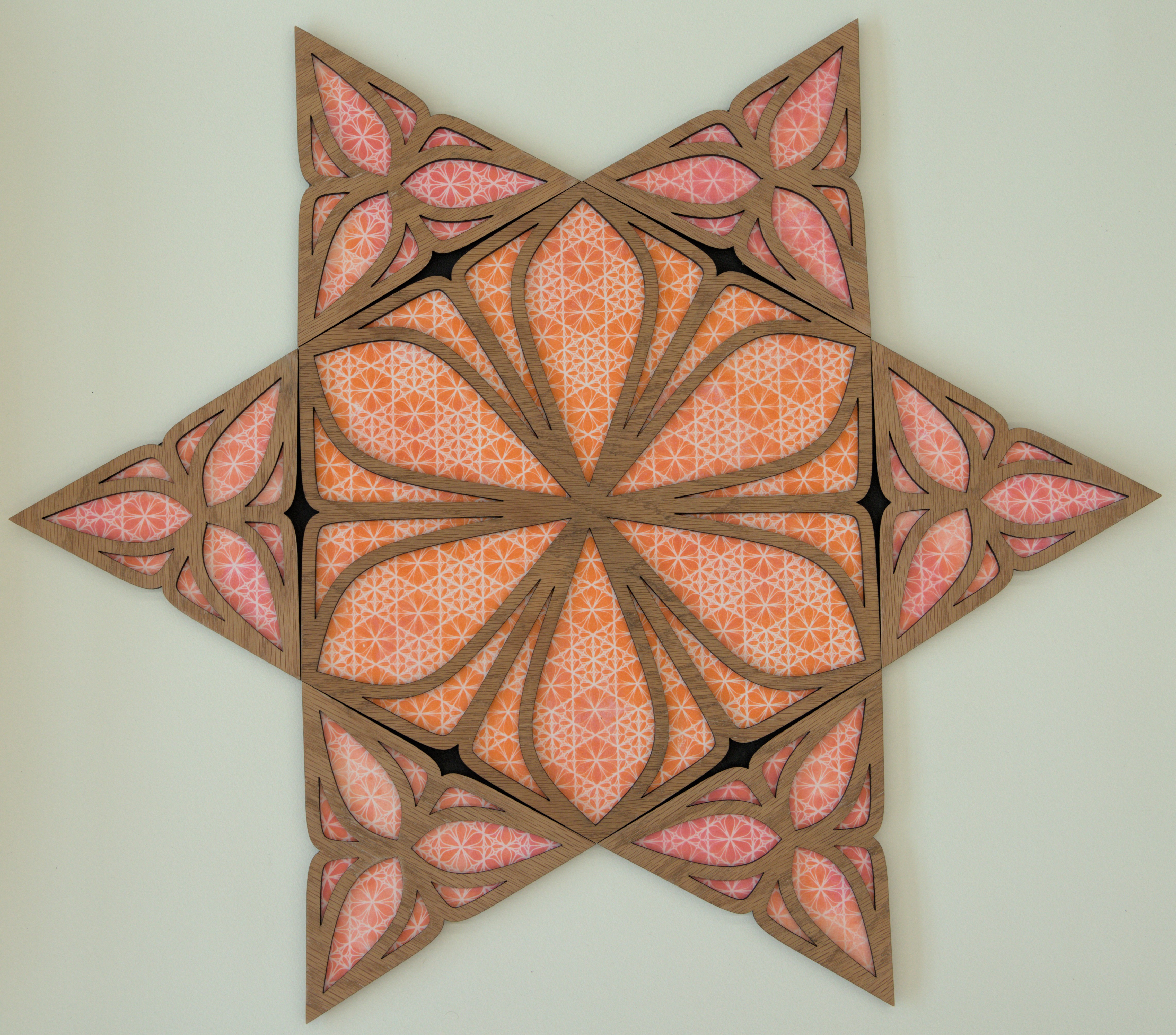}
\caption{\footnotesize A medium wall piece.} \label{fig-bigWall}
\end{center}
\vspace*{6pt}
\end{subfigure}
\begin{subfigure}[t]{0.95\linewidth}
\begin{center}
\includegraphics[width=0.8\linewidth]{./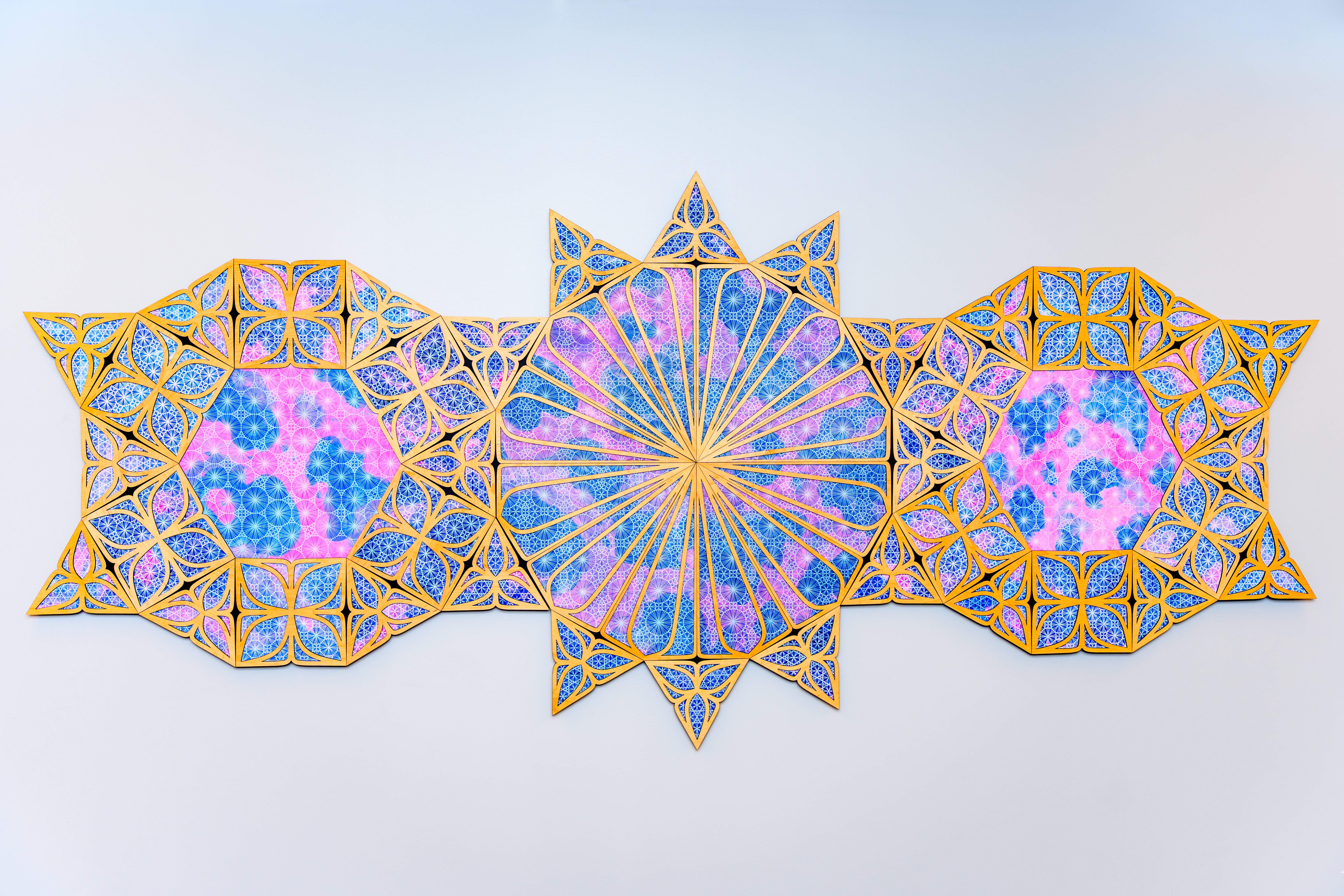}
\caption{\footnotesize A large wall piece.} \label{fig-bigWall}
\end{center}
\end{subfigure}
\end{center}
\caption{Pieces that combine SolarFast print making with wood
    cuttings.}  \label{fig-combined}
\end{figure}

\subsection{Three-Dimensional Art} \label{sec-3Dart}

Three-dimensional objects are visually possible as digital images, but many
are difficult to physically produce without special consideration and equipment. 
The most immediate issue is that FDM printers limit structure because they do not 
naturally accommodate overhanging features without additional, awkward, and unwanted 
vertical supports. This restriction limits creating three-dimensional renderings of paths in a
polytope, such as a Platonic solid, with standard FDM printers. We have had success 
with selective laser sintering (SLS) printing, but we postpone that discussion and 
focus momentarily on three-dimensional items possible with FDM printers.

A 3D printer requires a standard triangle language (stl) file that approximates the
surface of the object being printed, but curves in three-dimensional space
have no surface. Moreover, there are no standard python packages that generate printable stl 
files from parametrized curves, and as such, we created a custom package to generate printable 
files. The mathematical routine is simple in concept, as it essentially turns each path into 
a spaghetti-like object whose surface is then approximated by a triangulation, which is then 3D 
printable. We accomplish this task by identifying a collection of points on the path, computing 
a unit normal vector to the path at that point, and then placing points on a circle that is centered at 
the point on the path and that passes through the terminal end of a scaled version of the normal 
vector. This process ``extrudes'' a circle along the central path, with the result being a 
collection of points on the surface of a thickened version of the path, from which we create 
a triangulation for printing. 

\subsubsection*{Daisies and Thistles}

Daisies have relatively flat petal arrangements, which makes them easy to imitate
with two-dimensional patterns in $k$-gons. We adjoin a three-dimensional central path as stem to 
complete the facsimile. The simplicity of this design allows us to print daisies 
with an FDM printer so long as the central path forming the stem has limited overhang/bend.
Another option is a thistle. We mimic the spiky nature by setting $G(x) = x_1^2 + x_2^2 - 1$
to get a unit disc. Solving~\eqref{eq-necSuffCond} shows that
\[
\overline{\bP(G(x),c)} = \left\{ x(\mu) =
    \frac{\sqrt{\mu^2 + \| c \|^2} - \mu}{\|c\|^2} \, c \; : \: \mu > 0 \right\}
    \bigcup \left\{ 0, \, \frac{c}{\|c\|} \right\},
\]
and hence, central paths in a disc are (unsurprisingly) straight lines. We comment
that this fact is true for any two-norm unit sphere in higher dimensions. 
Figure~\ref{fig-daisyAndThistle} displays FDM printed items.

\begin{figure}[ht]
\begin{center}
\begin{subfigure}[t]{0.28\linewidth}
\begin{center}
\includegraphics[width=0.6\linewidth]{./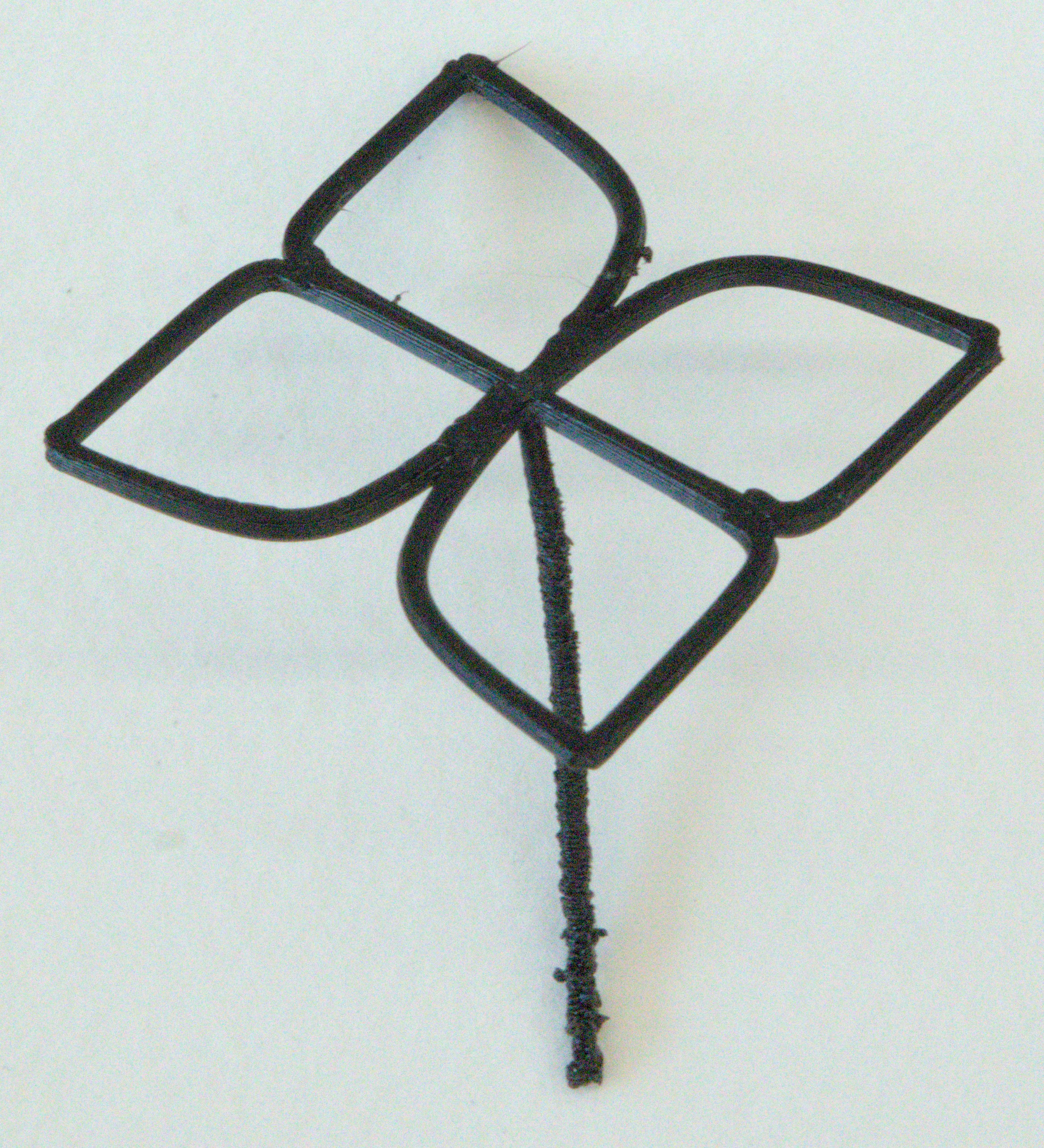}
\caption{\footnotesize FDM daisy.} \label{fig-daisyPrint}
\end{center}
\end{subfigure}
\hspace*{0.05\linewidth}
\begin{subfigure}[t]{0.28\linewidth}
\begin{center}
\includegraphics[width=0.685\linewidth]{./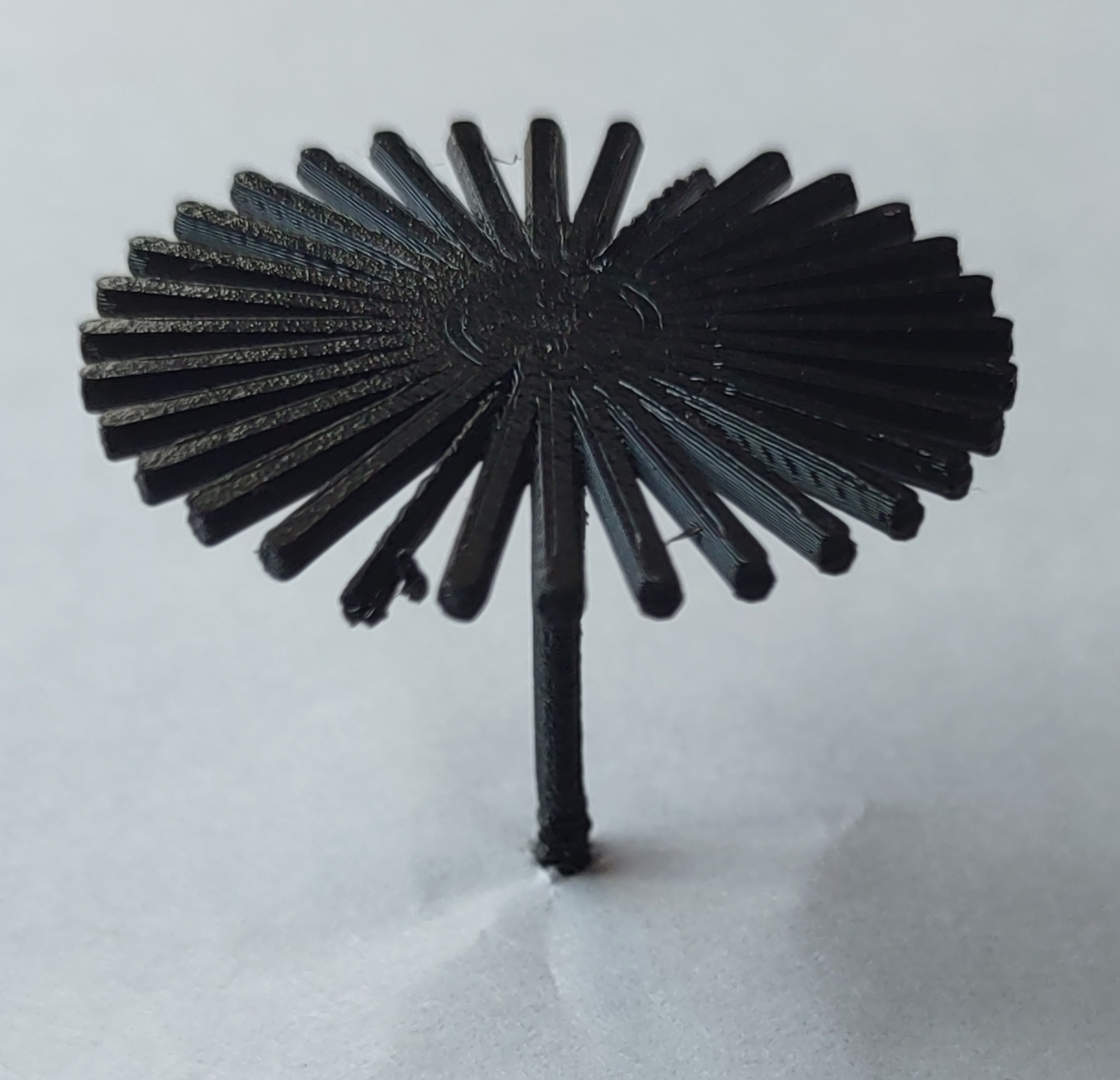}
\caption{\footnotesize FDM thistle.} \label{fig-thistlePrint}
\end{center}
\end{subfigure}
\end{center}
\caption{FDM prints of a daisy and a thistle.}  \label{fig-daisyAndThistle}
\end{figure}

\subsubsection*{SLS Printing}

Selective laser sintering (SLS) printing gives a significant advantage over
other three-dimensional printing modalities because it forms a mold of the item 
as it is being printed, and hence, there is no need for additional supports. SLS printers
build an item in layers similar to other printing paradigms, but SLS printing 
deposits a uniformly thin layer of polyamide material that is then fused with a 
laser to create a horizontal slice of the item(s) being printed. The unused 
polyamide material remains in place until the print is complete, creating 
a mold that supports the object. So SLS printers can print space-curves from stl files 
that approximate the curves. An SLS printer was unfortunately 
outside our funding ability, but we were able to pay a third party to create 
a single print. We selected random paths in a cube as our first test case because 
these images started our project. Figure~\ref{fig-SLS} 
shows the original three-dimensional image and the SLS print. We comment that
the SLS print is not cubic although the stl file indicated that it should have
been. We suspect that either the printer or the vendor reduced the vertical 
direction due to machine limitations or production restrictions.

\begin{figure}[ht]
\begin{center}
\begin{subfigure}[t]{0.45\linewidth}
\begin{center}
\includegraphics[width=0.7\linewidth]{./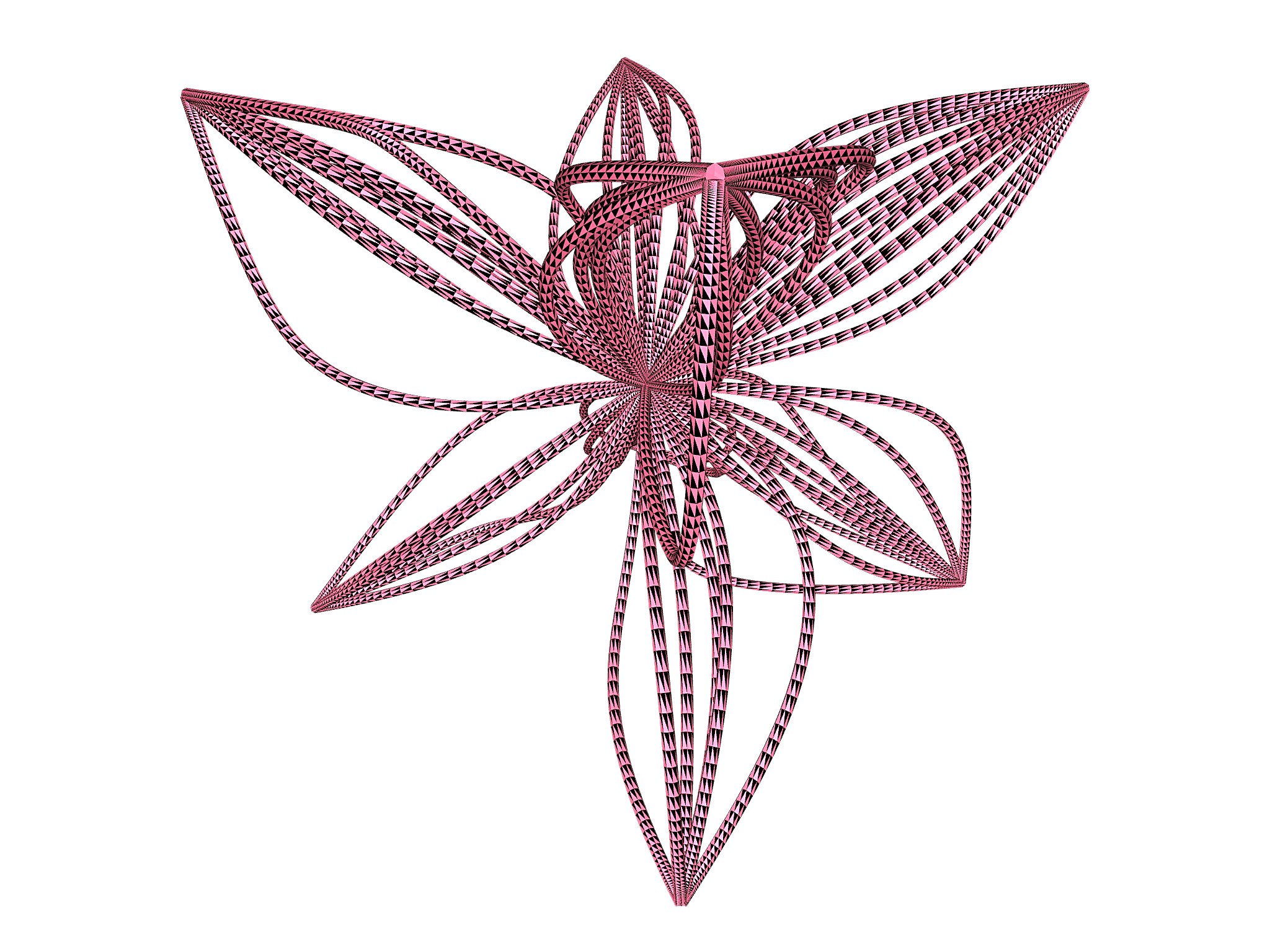}
\caption{\footnotesize An stl image of a cubic flower.} \label{fig-cubicFlowerImage}
\end{center}
\end{subfigure}
\hspace*{0.01\linewidth}
\begin{subfigure}[t]{0.45\linewidth}
\begin{center}
\includegraphics[width=0.7\linewidth]{./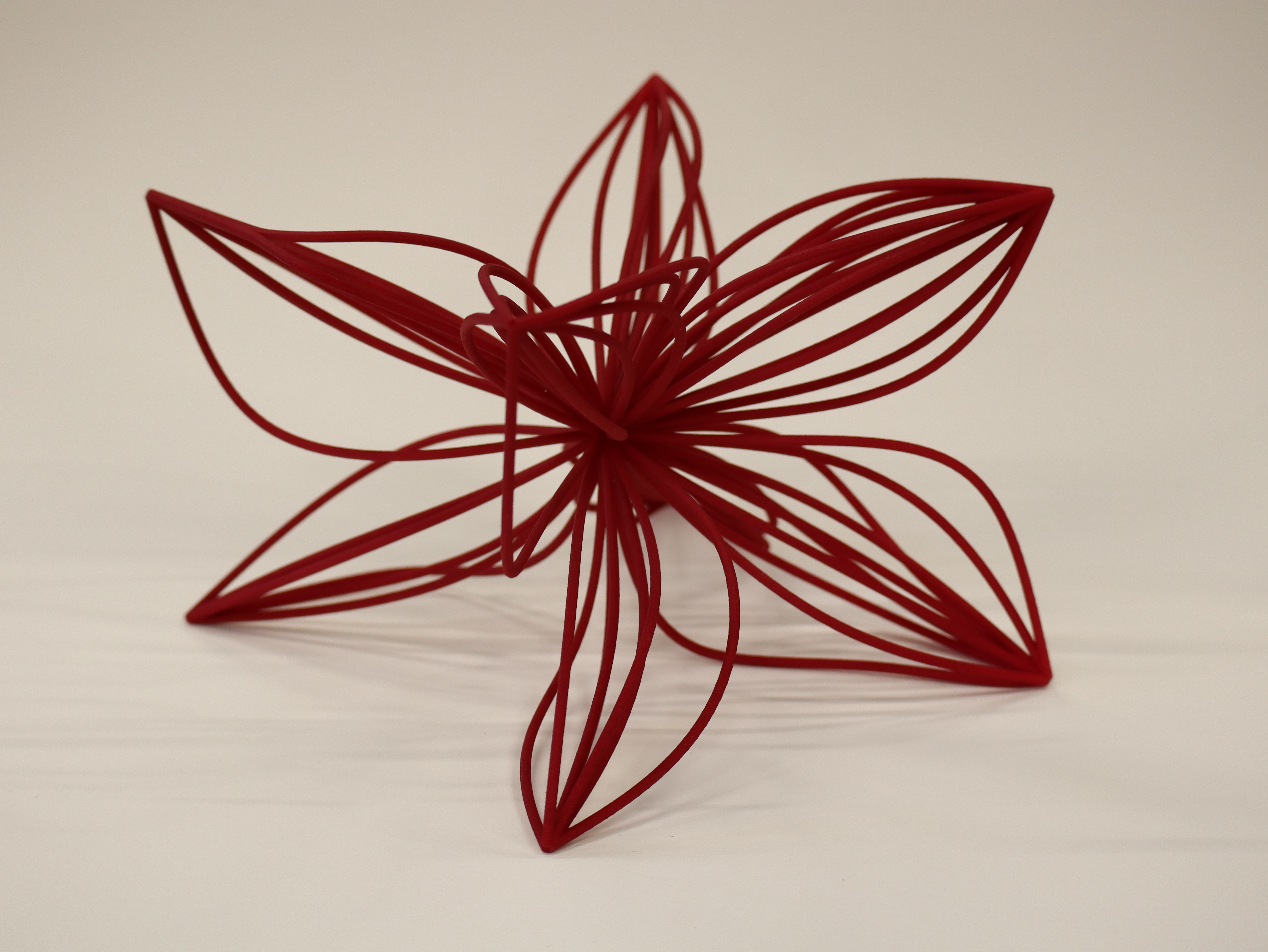}
\caption{\footnotesize An SLS printed cubic flower.} \label{fig-cubicFlowerPrint}
\end{center}
\end{subfigure}
\end{center}
\caption{An example of SLS printing.}  \label{fig-SLS}
\end{figure}

\section{Conclusion and Future Goals} \label{sec-conclusion}

We promote that the central path is a thing of beauty and is worthy of
artistic exploration. Its revolutionary impact on the field of optimization
is, in itself, an esoteric and rigorous work of art, but we have now
shown that the central path is also a unique `brush stroke' with which we
can create items of physical beauty. The mathematics of the central
path controls these two- and three-dimensional constructs, and we
manifest brush strokes by changing mathematical models. So in
this project, artistic modeling is mathematical modeling, and the bridge
between the two paradigms is a significant amount of computing.

Our primary goal for the future is to advance our ability with SLS
printing, and in particular, we hope to create a large optimization garden full 
of flower-like objects. Much of the mathematical and computational effort is 
in place, and we have, for instance, developed code to generate paths in the 
five Platonic solids. The authors have found this effort to be a unique 
mathematical enterprise because, while optimizers and geometers have long known 
how to express polytopes in terms of either facet inequalities or as convex 
combinations of vertices, going from one to the other has been a chore even 
though the mathematical relationships are well understood. Descriptions of 
the Platonic solids are commonly expressed in terms of standardized coordinates, 
i.e. in terms of vertices, but our mathematical models require facet inequalities. 
We initially formulated a linear combinatorial problem whose solution was a normal 
to a facet, and while this problem worked well on the tetrahedron and the 
cube, it proved insufficient or impossible to solve on the octahedron, the dodecahedron, 
and the icosahedron. So we moved to a search over subsets of the vertices to identify 
which subsets defined facets, from which we could then calculate a facet normal. This 
proved efficient on all five solids.

\begin{figure}[ht]
\begin{center}
\begin{subfigure}[t]{0.45\linewidth}
\begin{center}
\includegraphics[width=0.8\linewidth]{./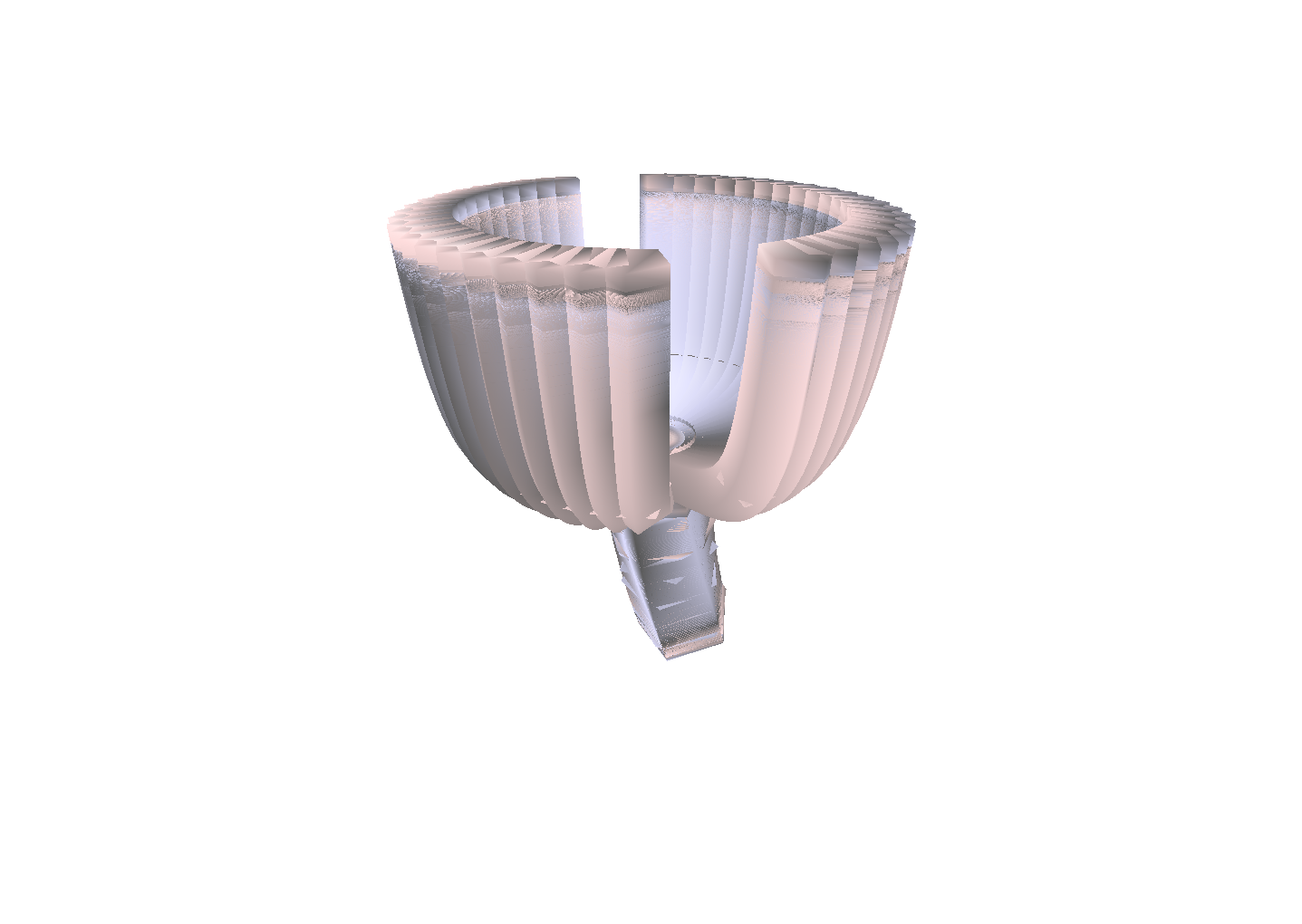}
\caption{\footnotesize An stl image of a tulip.} \label{fig-tulip}
\end{center}
\end{subfigure}
\hspace*{0.01\linewidth}
\begin{subfigure}[t]{0.45\linewidth}
\begin{center}
\includegraphics[width=0.9\linewidth]{./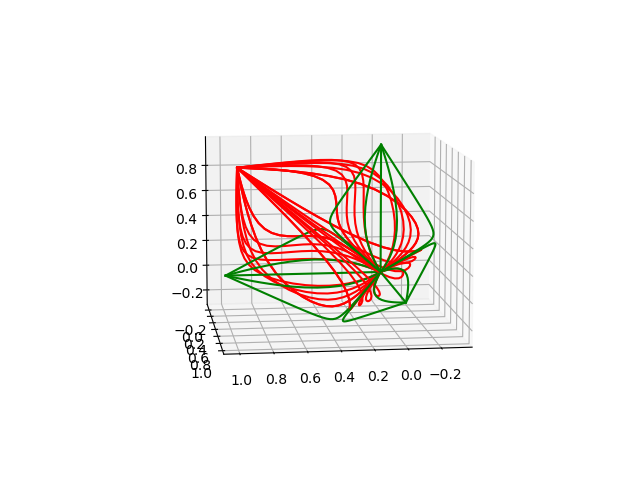}
\caption{\footnotesize A rosebud.} \label{fig-roseBud}
\end{center}
\end{subfigure}
\end{center}
\caption{Digital renderings for potential SLS printing.}  \label{fig-SLSpotential}
\end{figure}

The SLS print in Figure~\ref{fig-cubicFlowerPrint} illustrates possibility,
but this flower is surely, and wonderfully to the authors, a bit Burtonesque.
The tulip and rosebud images in Figure~\ref{fig-SLSpotential} are
more realistic, but we are not yet able to print these items. Theorem~\ref{thm-cylinder}
establishes that we can create a surface of paths for the petal of a tulip,
and we use our previous thickening technique on a discrete collection of paths to
emulate a petal. The result is an stl file like the one in Figure~\ref{fig-tulip}.
This technique unfortunately renders a file with an excessively large triangulation due
to the numerous intersections among the paths, and our printing software is unable
to slice the object for printing. We hope to overcome this issue soon by extending our 
triangulation software to avoid overlaps and intersections.
The rosebud in~\ref{fig-roseBud} is created by paths in a tetrahedron, and while our thickening 
technique should work similar to the item in~\ref{fig-cubicFlowerPrint}, we have not yet
generated stl files because we do not have access to an SLS printer. There is much more to explore, 
and in particular, we plan to learn which shapes lend themselves to SLS printing, what materials 
and finishes work well and give lasting results, and which computational adjustments ensure/advance
the mathematical fidelity of the printed items. \\\

\noindent{\bf Acknowledgments:} The authors thank Ms.~Soully Abas, Associate
Professor of Art, and Ms.~Christy Brinkman-Robertson, Art Curator, both at the
Rose-Hulman Institute of Technology. Their support, encouragement, and guidance
have been invaluable.

\bibliographystyle{plain}
\bibliography{citations}

\newpage

\section*{Supplement: Artistic Techniques and Lessons}

We present in this supplement the artistic techniques used to create the large 
piece of art in Figure~\ref{fig-bigWall}. The scale of this project challenged our 
intent to use what we had learned from smaller works to create an expansive piece 
for a spacious and well-traveled hallway. Techniques that had been successful
for moderate pieces just didn't scale easily, and we spent months tinkering 
with software, 3D printers, laser cutters, and dyeing processes before gaining 
the finesse to create a professional product. We catalog our methods here
to aid others interested in similar projects.

This work combined three components, those being wood cutouts of central paths for 
the front facing `frames,' 3D printed tilings to emboss patterns on the underlying 
paper, and the dyeing process to create the dynamic of color. We address each
of these processes below.

\subsection{Wood Cutouts}

We started with a collection of paths in a $k$-gon for each wood frame, but these paths
are mathematically one-dimensional and have no area, which means that they cannot be
used to create wood frames without imposing a width, i.e. we needed to `thicken' the paths. 
We saved each image as a scalable vector graphic (svg) file and used the open-source svg 
editor Inkscape to thicken the paths. The automated process in Inkscape unfortunately left 
the corners empty as seen in Figure~\ref{fig-noCor}. The light-blue curves are our computed 
central paths, and the larger dark-blue areas are Inkscape's thickened versions. The 
intersection of the thicker paths left voids at the corners that we had to manually correct
in each image, see Figure~\ref{fig-wCor}. We then used a laser cutter to cut wood frames 
that tiled without awkward gaps.

\begin{figure}[H]
\begin{center}
\begin{subfigure}[t]{0.45\linewidth}
\begin{center}
\includegraphics[width=0.8\linewidth]{./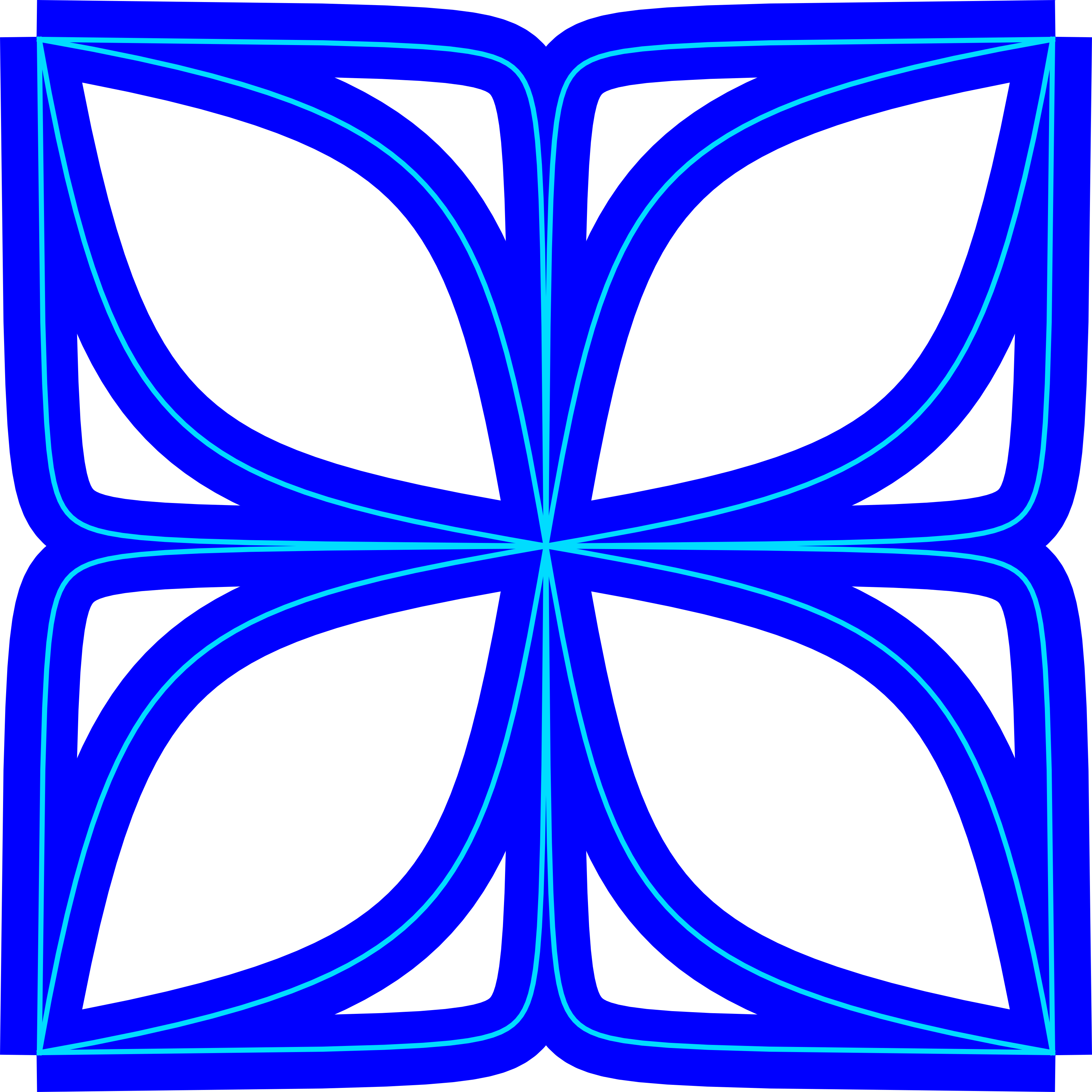}
\caption{\footnotesize Corners not filled in.} \label{fig-noCor}
\end{center}
\end{subfigure}
\hspace*{0.01\linewidth}
\begin{subfigure}[t]{0.45\linewidth}
\begin{center}
\includegraphics[width=0.8\linewidth]{./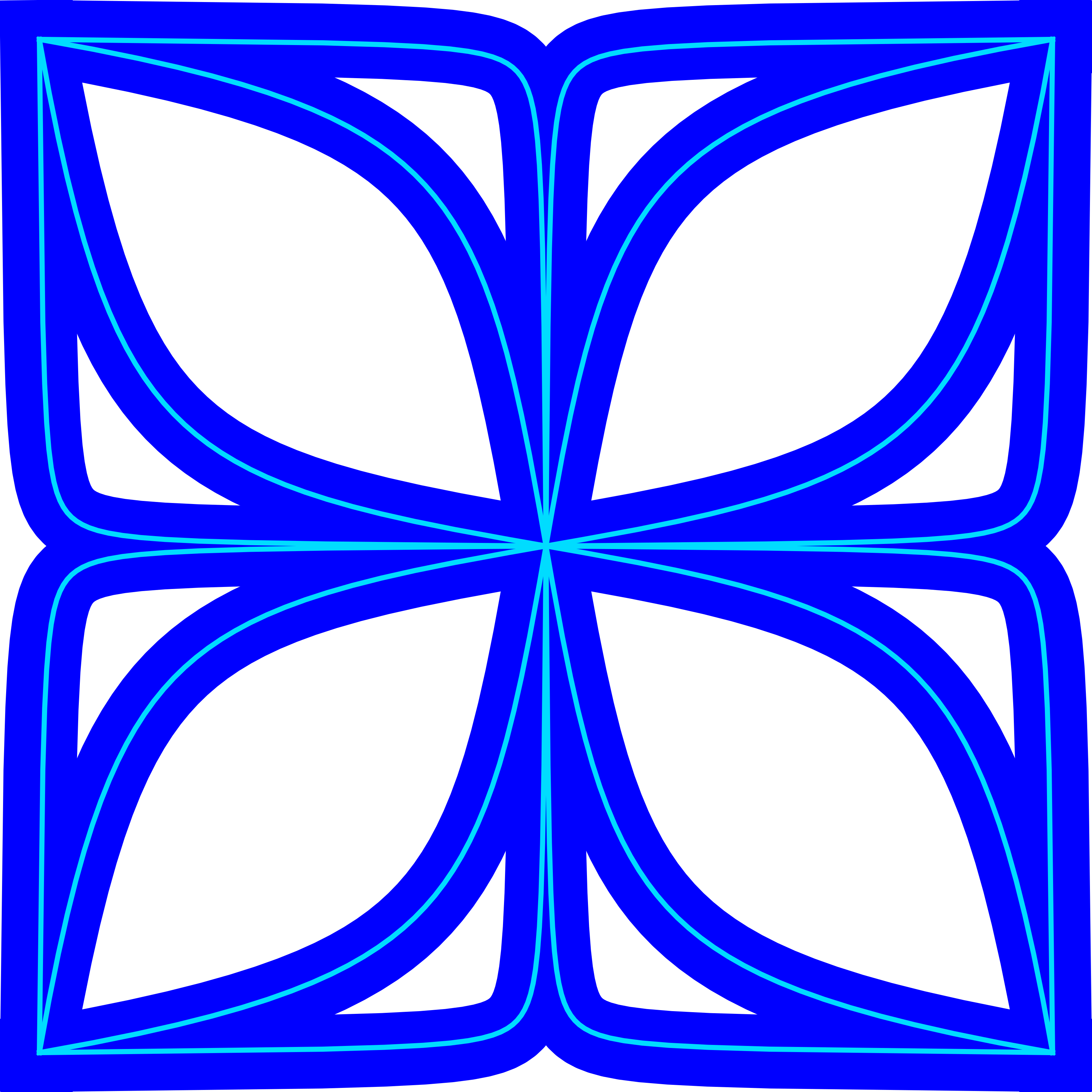}
\caption{\footnotesize Corners fixed.} \label{fig-wCor}
\end{center}
\end{subfigure}
\end{center}
\caption{Light blue original paths and dark blue thickened paths.}  \label{fig-cpr}
\end{figure}

\subsection{SolarFast Dyeing}

John Herschel invented solar/sun prints, called cyanotypes, in the 1840s,
which he produced by saturating a piece of paper or cloth with a mixture of 
ferric ammonium citrate and potassium ferricyanide and then exposing the saturated
item to UV light. The chemicals reacted under this exposure and dyed the item blue, 
with blueprints being an early example of the technique. A downside of 
cyanotyping is that we only have the single color of blue, but we fortunately
have a similar contemporary process that uses SolarFast that permits a variety 
of colors. We have experimented with both cyanotypes and SolarFast, and we 
generally prefer SolarFast.

\subsubsection*{3D Patterns}

The svg files of 2D patterns, like the ones that we used for laser cuttings, are 
not directly 3D printable because a thickened path in two-dimensions has no volume,
and 3D prints require surfaces of volumes. We have already noted our custom software
that extrudes circles along a path to create a printable stl file in Section~\ref{sec-3Dart}.
This code also works for paths in two-dimensions by setting the vertical $z$-coordinate
to zero, but it does so in a way that leaves the three-dimensional tessellations of the
thickened paths shy of being horizontally flat. We have instead used standard software in,
for instance, our printer's suite of utilities that accepts 2D svg images and thickens
them vertically for printing. This has proven trustworthy and has given us control of the
vertical height of the print, which is an important design element for the dyeing process.

The 3D printed patterns of the dyeing process are negatives because they shield
ultraviolet exposure and leave an undyed pattern. The general process is to
apply dye to watercolor paper, which is colorless at that point, place a 3D
printed pattern on the paper, cover the pattern with thick glass to hold everything
in place, expose the paper to sunlight to activate the chemicals and create
color, and then wash and dry the paper. This straightforward process works well
for smaller items, but it requires special attention as it scales for larger pieces.
The most daunting concern is that large pieces of watercolor paper tend to
warp and shrink as they are exposed, leaving the negative less than effective. The
flawed result has weak-looking paths that lack contrast and that tarnish the visual 
acuity of the underlying mathematics. We discovered solutions to several design 
decisions to remove, or at least limit, this concern.

The type of watercolor paper is important, and for larger pieces we found
that $140$ pound cold-pressed paper with a composition of $25\%$ cotton works well.
Heavier weights like $300$ pound paper can work nicely because they reduce
buckling; however, their heavier weight challenges gluing them to the wooden
frames even with reduced buckling. We experienced more buckling with lighter 
weight paper but were able to better press the print flat as we glued it to
the frame, creating a polished and taught appearance. Using a cold-pressed paper
gave us a slightly textured surface that worked well with SolarFast.

The negatives of our largest prints exceeded the dimensions of our FDM printer,
and as such, we printed the negatives in pieces that were then welded together
with a soldering iron to create one large pattern, see Figure~\ref{fig:neg}. 
The dimensions of the paths and the type of filament are both important for 
fitment and dyeing. We have used both PLA and PETG filaments, but we prefer PETG 
because it has thermal properties that allow the reuse of the pattern. PLA patterns
actually work a bit better for large prints because they conform better to the surface
of the paper, but they warm and deform during the exposure period, leaving them questionable
for subsequent prints. So PLA patterns work well but create waste and lengthen the 
creative timeline due to the need to print and assemble a new pattern for each print. The 
dimensions of the 3D printed paths are important because the width defines what the eye 
will see of a path, and the height alters how sunlight will interact with the dye. We 
have found that PETG with a path width varying from $1$ to $1.2$mm and a vertical height 
of $1.5$mm results in a negative that has accurate fitment and balanced flexibility, 
resulting in prints with good visual acuity and contrast.

\begin{figure}[h]
    \centering
    \includegraphics[width=0.4\linewidth]{./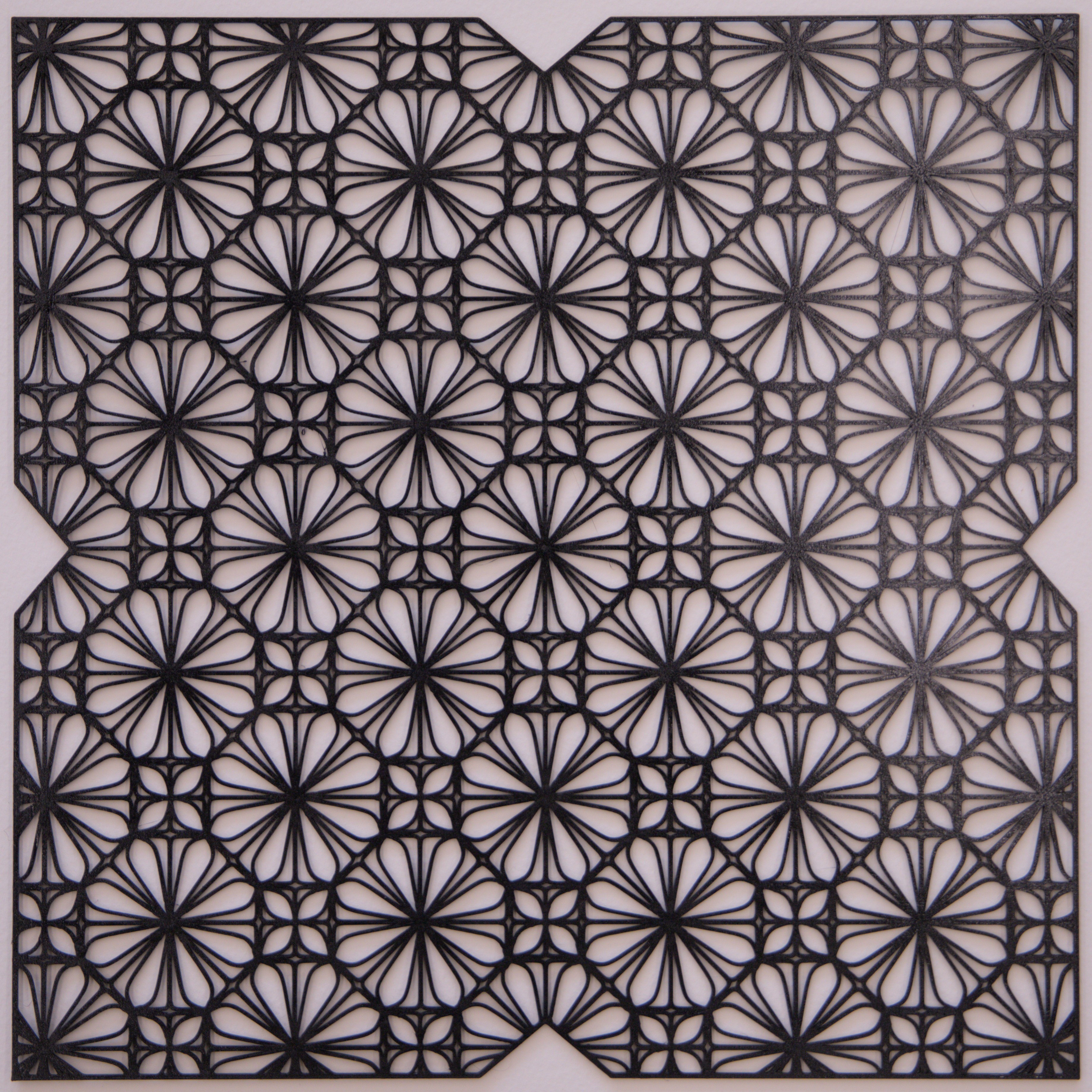}
    \caption{3D Printed Negative.}
    \label{fig:neg}
\end{figure}

\subsection*{Solar Printing}

The general printing process described above has several smaller but important steps
if the goal is to create high-quality large prints. The most important hindrance to
overcome is the fact that the paper adjusts as it warms during the exposure period.
We warn that we tried several ways of immobilizing the paper to limit the deleterious 
effect of exposing the paths that were supposed to shielded by the negative. None of
holding the paper taught with tape, adding additional glass for extra weight, or 
padding the backside of the paper to help impress the negative worked individually.
The tension created by the warming paper is impressive, and attempts to physically 
affix it proved ineffective. What instead worked was to delay the paper's
deformation by keeping it wet during the dyeing process. We accomplished this task
by wetting the paper prior to dyeing it, which left the paper sufficiently wet
during the exposure that it did not appreciably deform. This technique, along with
some physical binding, gave excellent results. Our standard procedure for large prints is below.
\begin{enumerate}
    \item Soak the paper in water for five to ten minutes.
    \item Combine dye and water in small bowls or containers in a roughly 2:1 ratio.
    \item Brush dye onto paper with a one-inch foam brush, with each color getting its own brush.
    \item Lightly mist the paper with water using a spray bottle.
    \item Use a two-inch round sponge to blot the paper, remove brush stokes, and 
        spread the color (in a quasi-random way).
    \item Place the paper on a flat padded surface.
    \item Place the 3D printed negative on top of the paper.
    \item Use your hands or a rubber brayer to slightly press the negative onto the wet paper,
        which somewhat adheres the negative to the paper.
    \item Place a piece of glass on top of the negative and press down firmly. 
        Multiple pieces of glass for extra weight can help.
    \item Take the piece outside and expose it in a shady spot for around twenty minutes - longer
        if the UV index is low. The print is done once you are happy with the color.
    \item Bring the print back inside and wash it following the SolarFast instructions. 
        Make sure the print does not become translucent while washing it. Stop washing 
        immediately if so.
    \item Place the washed print on a stretching board.
    \item Wipe the surface of the paper to remove any remaining dye with a dry paper towel.
    \item Tape the edges of the paper to the stretching board and wait for the print to dry.
\end{enumerate}
We comment that pre-soaking the paper affects the color of the dye, and most notably,
it results in the color being truer to the advertised hue. We suspect that this effect is
due to the extra dilution of the dye, but it could also be due in part because the dye
absorbs deeper into the paper. In any event, the concentration of the dye is an important
consideration of an artist as they use this technique.

\end{document}